\documentclass[10pt,a4paper]{article}

\usepackage[latin1]{inputenc}

\usepackage{amsmath,amscd,amssymb,latexsym,graphicx,color,mathrsfs}

 \usepackage[all,cmtip]{xy}
\usepackage{hyperref}
\hypersetup{
    colorlinks=true,
    linkcolor=black,
    filecolor=black,      
    urlcolor=black,
    pdftitle={Gysin},
    pdfpagemode=FullScreen,
    citecolor=black,
    }

\newtheorem{theorem}{Theorem}[section]

\newtheorem{proposition}[theorem]{Proposition}
\newtheorem{corollary}[theorem]{Corollary}

\newtheorem{lemma}[theorem]{Lemma}
\newtheorem{definition}[theorem]{Definition}
\newtheorem{remark}[theorem]{Remark}

\numberwithin{equation}{section}

\newcommand{\cqfd}{\hfill{\small $\Box$}}
\newenvironment{proof}[1][]{{\bf Proof #1 : }}{\begin{flushright}
\cqfd\end{flushright}}

\reversemarginpar

\newcommand{\theory}{$\mathrm{CS}$}

\newcommand\tgt[1]{{}^{T}\kern-1pt #1}
\newcommand\adi[1]{{}^{ad}\kern-1pt #1}

\newcommand{\dnc}{\mathbb{D}}
\newcommand{\Ind}{\mathrm{ind}}

\title{Gysin maps and wrong way functoriality via geometric deformation groupoids}

\author{P. Carrillo Rouse\footnote{Institut de Math\'ematiques de Toulouse, Universit\'e Toulouse III, 118, Route de Narbonne, 31400
Toulouse, France. Email: paulo.carrillo@math.univ-toulouse.fr This author was partially supported by the ANR OpART, ANR-23-CE40-0016}  and Q. Karegar Baneh Kohal\footnote{ Universidad Nacional Aut\'onoma de M\'exico, Instituto de Matem\'aticas, Unidad Cuernavaca, Cuernavaca, Morelos, M\'exico . Email: quentin.karegar@im.unam.mx}}

\date{\today}

\begin{document}

\maketitle

\begin{center}
{\bf Abstract}
\end{center}
In this article, we study the normal bundle and the deformation to the normal cone functors within the category of Lie groupoids with a view to constructing wrong way pushforward (also called shriek or Gysin) maps in any suitable (compactly supported) cohomology theory for Lie groupoids (including K-theory) by axiomatic means. The main theorems being the functorial behavior of these pushforward maps which recovers, unifies and generalizes many previous cases. In order to illustrate these constructions, we treat the novel case of wrong way maps for groupoid-equivariant (twisted) orbifold K-theory in the last part of the paper.

\section{Introduction and motivation}

The use of deformation groupoids (as the most famous Connes' tangent groupoid) has been a very useful and powerful tool in recent years in index theory, K-theory and related areas. In this paper we study how to use the normal bundle and the deformation to the normal cone functors to get deformation Lie groupoids that allow to geometrically construct pushforward maps in any suitable (co)homology theory for Lie groupoids (not only K-theory), the main results being the functoriality for these pushforward maps which recovers, unifies and generalizes all cases we are aware of and give in this way a geometric approach to them, obtaining as well new cases. The main new example we develop in this paper is the wrong way functoriality for equivariant (twisted) orbifold K-theory as will be detailed below but, as we will explain and develop elsewhere, the constructions and theorems apply for other theories as well.

Now, in order to motivate the results, let us recall the classical construction of a Gysin homomorphism in de Rham cohomology. Consider first the case of an oriented smooth fibration between two smooth manifolds
\begin{equation}
f:M\to B.
\end{equation}
Then the integration of compactly supported forms along the fibers of the fibration defines a group  homomorphism
\begin{equation}
f_!:H_{dR,c}^*(M)\to H_{dR,c}^{*-k}(B)
\end{equation}
between the corresponding compactly supported de Rham cohomology spaces (with $k$ the dimension of the fibers). 
Next, an oriented smooth immersion
\begin{equation}
j:M\hookrightarrow P
\end{equation}
also induces a Gysin homomorphism 
\begin{equation}
j_!:H_{dR,c}^*(M)\to H_{dR,c}^{*+r}(P),
\end{equation}
obtained as follows: first, apply the Thom isomorphism 
$$Th:H_{dR,c}^*(M)\stackrel{\cong}{\longrightarrow}H_{dR,c}^{*+r}(N(j))$$
of the normal bundle $N(j):=j^*TP/TM$,
and then compose it with the canonical extension by zero $$H_{dR,c}^{*+r}(N(j))\to H_{dR,c}^{*+r}(P)$$
after choosing a tubular neighborhood. 
Finally, for an arbitrary oriented smooth map 
\begin{equation}
f:M\to N
\end{equation}
may be decomposed as a composition of an immersion followed by a submersion, reducing the construction of a Gysin homomorphism
\begin{equation}
f_!: H_{dR,c}^*(M)\to H_{dR,c}^{*+r}(N)
\end{equation}
 to the two previous cases.
 By a standard computation, the definition is shown to be  independent of the decomposition, and functorial: 
\begin{equation}
(f\circ g)_!=f_!\circ g_!
\end{equation}
for any composable pair of oriented smooth maps $f$ and $g$.
\par
For the topological $K$-theory, assuming now that $f:M\to N$ is a  $K$-oriented smooth map, the construction of a Gysin map 
\begin{equation}
f_!: K_{top}^*(M)\to K_{top}^{*+r}(N).
\end{equation}
is done along the same lines, by remplacing the integration along the fibers by a family index morphism.
In this latter case, proving that the above map does not depend on the decomposition (immersion-submersion) and the corresponding functorial behavior is not trivial since it corresponds to the Atiyah-Singer index theorem (for families). The comparison between the respective Gysin maps, in topological K-theory and  de Rham cohomology, returns the classical Atiyah-Singer index formula (including the family case).
A drawback of the above strategy of decomposing into an immersion-submersion, is that it cannot be applied in the equivariant case (under the action of a discrete group or groupoid).

In this paper, we propose a geometric construction for Gysin morphisms in ``theories'' including  two above examples. 
In particular, our construction and functoriality theorems apply for general Lie groupoids and will give as applications (listed below) all known cases, that we are aware, of Gysin morphisms and their functoriality. We will as well give at least one (highly not trivial in our opinion) not known (or at least not stablished in the litterature) case, the wrong way functoriality for (twisted) equivariant K-theory for orbifolds. As we will also argue later, the construction is  shaped in order to work for suitable cohomology theory on differentiable stacks. But before entering into the details of the present paper let us give an idea of the contruction for the above two cases, for us it is important to give this motivating example to justify why the construction is geometric and natural.

Consider a smooth map $f:M\to N$. In the category of smooth manifolds $f$ is not necessary an immersion nor a submersion, but viewing it as a map between differentiable stacks it can be easily modified into a Lie groupoid immersion. 
Indeed, it fits in the commutative diagram
\begin{equation}
\xymatrix{
M\ar[r]^-f\ar[rd]_-{\tilde{f}}&N\\
&(M\times M)\times N\ar[u]_-p^-\sim
}
\end{equation}
where $\tilde{f}=(\Delta,f)$ with $\Delta:M\to M\times M$ the diagonal map, and $p$ the canonical projection. In order to be interesting, the above diagram should be considered as a commutative diagram of Lie groupoid morphisms, where the only not unit groupoid is the pair groupoid $M\times M\rightrightarrows M$. Once in the category of Lie groupoids and generalized morphisms not only the above diagram is commutative but the projection $p$ is a canonical Morita equivalence, hence in the category of differentiable stacks the map $f$ and the $\tilde{f}$ are indistinguishable. Now, the advantage of working with $\tilde{f}$ is that there is an associated normal bundle that takes the form in this example of the action Lie groupoid
\begin{equation}
TM\ltimes f^*TN\rightrightarrows f^*TN
\end{equation}
induced by the differential $df$ and called the normal groupoid of $f$. Furthermore, associated to such an $\tilde{f}$ there is a Lie groupoid (the deformation - to the normal cone - groupoid) $D\rightrightarrows D^{(0)}$ that comes with an extra structure of a smooth family of Lie groupoids parametrized by the closed interval $[0,1]$, ruled by a smooth map $q:D\to [0,1]$, such that the fiber at zero, $q^{-1}(0)$, identifies with the normal groupoid $TM\ltimes f^*TN$ and the fiber at $t\neq 0$, $q^{-1}(t)$, identifies with the Lie groupoid $(M\times M)\times N$. 

So, suppose there is a theory $F^\bullet$ (for example $F^\bullet=K^*_{top}$ or $F^\bullet=H_{dR,c}^*$ extended to Lie groupoids) attaching a module to each Lie groupoids, and which satisfies some suitable axioms like  homotopy invariance, Morita invariance and  functoriality for saturated inclusion. In particular, the restriction at zero of the deformation groupoid
\begin{equation}
r_0:F^\bullet(D)\to F^\bullet(TM\ltimes f^*TN)
\end{equation}
is an isomorphism. If moreover, an analog of the Thom isomorphism holds, then one obtains a pushforward map 
\begin{equation}
f_!:F^\bullet(M)\to F^\bullet(N)
\end{equation}
by composing
\begin{enumerate}
\item the Thom isomorphism
\begin{equation}
Th: F^\bullet(M)\to F^\bullet(TM\ltimes f^*TN)
\end{equation}
followed by
\item a deformation index
\begin{equation}
\Ind_f:F^\bullet(TM\ltimes f^*TN)\to F^\bullet((M\times M)\times N)
\end{equation}
(defined by $r_1\circ r_0^{-1}$, the composition of the inverse of the restriction at zero induced morphism followed by the restriction at one induced morphism)
and followed by
\item an isomorphism 
\begin{equation}
F^\bullet((M\times M)\times N)\to F^\bullet(N)
\end{equation}
induced by the canonical Morita equivalence.
\end{enumerate}

For $F^\bullet=K^*_{top}$ or $F^\bullet=H^*_{dR,c}$, it is not hard to show by direct computations that the above defined Gysin morphism coincides with the "classically" defined ones. Of course one has to extend these theories to at least the groupoids used in the construction, for $K^*_{top}$ one can use the $K$-theory groups for the associated $C^*$-algebras, for $H^*_{dR,c}$ one can use for instance the cyclic periodic homology of appropriate Schwartz type algebras, as done for example in \cite{CRWW}, we will come to more on this below.

Let us now describe in more details the contents of this paper. Given a commutative ring $R$, assume that $F^\bullet$ associate to each Lie groupoid $G$ some (graded) $R$-module  $F^\bullet(G)$, and satisfies the following axioms:
\begin{enumerate}
\item[(A1)] if $G_1 \subseteq G$ is a closed saturated inclusion of Lie groupoids, then there is a restriction morphism
$$r_{G_1}^{G}:F^\bullet(G)\to F^\bullet(G_1)$$
such that $r_{G}^{G}=Id$. 
In addition, if $G_2 \subseteq G_1 \subseteq G$ are closed saturated inclusion, then
$$r_{G_2}^{G}=r_{G_2}^{G_1}\circ r_{G_1}^{G}.$$
\item[(A2)] For any Lie groupoid immersion $(G_2,G_1,j)$ ($j:G_1\to G_2$ a not necessarilly injective immersion of Lie groupoids), if $\dnc(G_2,G_1,j)$ denotes the associated deformation to the normal groupoid (see section \ref{sectionDCN+normal} for precise definition), then the restriction morphism
$$r_0 := r_{N(G_2,G_1,j)\times \{0\}}^{\dnc(G_2,G_1,j)}: \: F^\bullet(\dnc(G_2,G_1,j))\to F^\bullet(N(G_2,G_1,j)),$$
where $N(G_2,G_1,j)$ is the associated normal bundle groupoid (see section \ref{sectionDCN+normal} for precise definition), is an isomorphism.

\end{enumerate}
We call such an $F^\bullet$, a \theory-theory (Cohomological Stack). An example of this is $K^*_{top}(C^*(\cdot))$ (the $K$-theory groups of the associated maximal $C^*$algebras (=reduced in some good cases)). 

Let $F^\bullet$ be a \theory-theory and let $(G_2,G_1,j)$ be a Lie groupoid immersion, we can define {\bf the $F^\bullet$-deformation index of the immersion $(G_2,G_1,j)$} (or the deformation index in the theory $F^\bullet$) as the morphism of $R$-modules
\begin{equation}
\Ind_{F^\bullet}^{j}: F^\bullet(N(G_2,G_1,j))\to F^\bullet(G_2)
\end{equation}
given as
$$\Ind_{F^\bullet}^{j}:=r_1\circ r_0^{-1},$$
where $r_1:=r_{G_2\times \{1\}}^{\dnc(G_2,G_1,j)}$.

Our first fuctoriality theorem is an immediate consequence of the above required properties for an \theory-theory and even more importantly of the geometric functoriality properties of the normal bundle functor and of the deformation to the normal cone functor. Indeed, its proof becomes almost trivial once one understand these geometric functorial properties: the symmetry theorems~\ref{symmetryNBthm} and~\ref{symmetryDNC}, which describe how the iteration of the normal bundle functor and of the deformation to the normal cone functor are related. 

\begin{theorem}[Pushforward functoriality I]
Let $F^\bullet$ be a \theory-theory as above. Suppose we have a commutative diagram of Lie groupoid immersions
\begin{equation}
\xymatrix{
G_1\ar[r]^-{i_1}\ar[d]_-{j_1}&G_2\ar[d]^-{j_2}\\
H_1\ar[r]_-{i_2}&H_2.
}
\end{equation}
Suppose the normal bundle maps $N(j_2,j_1)$ and $N(i_2,i_1)$ are Lie groupoid immersions as well.
Then the following equality of $R$-modules index morphisms holds
\begin{equation}
\Ind_{F^\bullet}^{i_2}\circ \Ind_{F^\bullet}^{N(j_2,j_1)}=\Ind_{F^\bullet}^{j_2}\circ \Ind_{F^\bullet}^{N(i_2,i_1)}
\end{equation}
\end{theorem}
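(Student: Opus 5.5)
The strategy is to work with the double deformation groupoid attached to the commutative square, a Lie groupoid $\dnc^2$ over $[0,1]^2$. It is obtained by applying the deformation to the normal cone functor twice: first to the square in one direction, then to the resulting immersion. Concretely, functoriality of $\dnc$ gives a Lie groupoid immersion
$$\dnc(j_2,j_1):\dnc(G_2,G_1,i_1)\to\dnc(H_2,H_1,i_2)$$
over $[0,1]$, since $j_1,j_2$ are compatible with $i_1,i_2$. We set $\dnc^2:=\dnc\big(\dnc(H_2,H_1,i_2),\dnc(G_2,G_1,i_1),\dnc(j_2,j_1)\big)$.

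Next we identify the restrictions of $\dnc^2$ to the four faces of the square $[0,1]^2$.
\begin{itemize}
\item At $s=1$ (first parameter, the $i$ direction) the restriction is $\dnc(H_2,G_2,j_2)$.
\item At $s=0$ it is $\dnc\big(N(i_2),N(i_1),N(j_2,j_1)\big)$.
\item At $t=0$ (second parameter, the $j$ direction) we get the deformation groupoid of the normal bundle of $\dnc(j_2,j_1)$. By the symmetry theorem~\ref{symmetryDNC} (together with~\ref{symmetryNBthm}), this is canonically isomorphic to $\dnc\big(N(j_2),N(j_1),N(i_2,i_1)\big)$ with the roles of the two parameters exchanged.
\item At $t=1$ it is $\dnc(H_2,H_1,i_2)$.
\end{itemize}
The corner $(0,0)$ is $N(N(i_2),N(i_1),N(j_2,j_1))\cong N(N(j_2),N(j_1),N(i_2,i_1))$, again by Theorem~\ref{symmetryNBthm}. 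The corner $(1,1)$ is $H_2$. The hypothesis that $N(j_2,j_1)$ and $N(i_2,i_1)$ are immersions is exactly what makes these deformation groupoids and the iterated indices well defined.

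With these identifications, the argument becomes a diagram chase using (A1) and (A2). By (A1) all restriction maps from $F^\bullet(\dnc^2)$ to the faces and corners commute, since the corners are closed saturated in the faces, which are closed saturated in $\dnc^2$. By (A2), the restriction from $\dnc^2$ to the face $t=0$ is an isomorphism, and so is the restriction from the face $s=0$ (a deformation groupoid) to the corner $(0,0)$. From this we get:
\begin{enumerate}
\item The restriction from the face $t=0$ to the corner $(0,0)$ is an isomorphism, being the $r_0$ of the deformation groupoid identified above via symmetry.
\item The composite restriction $\rho$ from $\dnc^2$ to $(0,0)$ is an isomorphism.
\end{enumerate}
Now take $x\in F^\bullet$ of the double normal bundle and set $y=\rho^{-1}(x)$. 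Restricting $y$ along the path $(0,0)\to(0,1)\to(1,1)$ computes $\Ind^{i_2}\circ\Ind^{N(j_2,j_1)}(x)$. Indeed, $y|_{s=0}$ is the unique preimage of $x$ under $r_0$ of that face; its value at $(0,1)$ is then $\Ind^{N(j_2,j_1)}(x)$; and $y|_{t=1}$ is the preimage of that value under the $r_0$ of $\dnc(H_2,H_1,i_2)$. Restricting $y$ along $(0,0)\to(1,0)\to(1,1)$ computes $\Ind^{j_2}\circ\Ind^{N(i_2,i_1)}(x)$ in the same way, now using the face $t=0$ identified through~\ref{symmetryDNC}. Both paths end at $y|_{(1,1)}$, so the two composites agree.

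The main obstacle is the geometric bookkeeping of the face identifications, not the algebra. Two points need care:
\begin{itemize}
\item The symmetry isomorphisms must be compatible with the restrictions to the faces and corners. That is, the isomorphism of Theorem~\ref{symmetryDNC} restricted to the corner must be the isomorphism of Theorem~\ref{symmetryNBthm} (the flip of the double normal bundle). Otherwise the two indices would be computed on differently identified copies of the double normal bundle.
\item The faces must be closed saturated subgroupoids of $\dnc^2$. This holds because they are preimages of closed subsets of $[0,1]^2$ under the submersion to the parameter square, and preimages of subsets of the parameter space are saturated.
\end{itemize}
I would check the first point by writing $\dnc^2$ locally in exponential coordinates, where the flip is the obvious exchange of the two parameters and the two normal directions.
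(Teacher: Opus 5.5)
Your proposal is correct and follows the paper's proof. It uses the same iterated deformation groupoid $\mathfrak{D}=\dnc(\dnc(H_2,H_1,i_2),\dnc(G_2,G_1,i_1),\dnc(j_2,j_1))$ and the same $3\times 3$ diagram of face and corner restrictions, with (A1), (A2) and Corollary~\ref{symmetryDNC} identifying the face $N(\dnc(j_2,j_1))$ with $\dnc(N(i_2,i_1))$. Your element chase through $y=\rho^{-1}(x)$, and your remark that the isomorphism of Corollary~\ref{symmetryDNC} must restrict on the corner to that of Theorem~\ref{symmetryNBthm}, make explicit what the paper leaves implicit when it asserts that the diagram commutes.
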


The case we are interested in is when $G_1=H_1$ and $j_1=Id$, in this case the hypothesis are satisfied, corollary \ref{cor-monosquare-specialcase}. Now, the reason for considering general squares is that, in our opinion, it highly simplifies the double deformation arguments and it reflects better the symmetry properties of the normal bundle functor and of the deformation to the normal cone functor.

Next, in order to go further into the functoriality and the definition of the Gysin maps we want to consider, we can ask to our \theory-theories to have what we call the Thom isomorphism property, developped in section \ref{sectiondefindices}, particularly around the definition \ref{Thomproperties}. There, we consider what we call $F^\bullet$-orientations on vector bundles groupoids which are essentially the existence of what we can call Thom isomorphisms, for more details see definition \ref{Thomproperties}. Of particular importance for us: given $(G_2,G_1,j)$ a Lie groupoid immersion and a given \theory-theory $F^\bullet$, we say that $j$ is $F^\bullet$-oriented if its normal bundle groupoid has an $F^\bullet$-orientation, in this case we have then a Thom isomorphism
\begin{equation}
F^\bullet(G_1)\xrightarrow{Thom_j} F^\bullet(N(G_2,G_1,j)).
\end{equation}
that allows, together with the associated deformation index, to define
$$j_!:F^\bullet(G_1)\to F^\bullet(G_2)$$
as the composition of the Thom isomorphism $Thom_j$ and the deformation index $\Ind_{F^\bullet}^{j}$. 

The following theorem is our main result, as we will see below it will give as immediate applications very interesting Gysin maps and functoriality theorems, for some general maps, not necessarilly for immersions. Its proof is quite straightforward from the theorem above together with the axioms/properties asked to the Thom isomorphisms, properties that are of course classical Thom properties for (generalized) cohomological theories such as K-theory or other cohomologies. Our way of presenting the Thom isomorphism property is largely inspired from the work of Hilsum and Skandalis in \cite{HS} and the work of Emerson and Meyer in \cite{EM2010}.

\begin{theorem}[Pushforward functoriality II]
Let $F^\bullet$ be a \theory-theory with the Thom isomorphism property. Suppose we have a commutative diagram of Lie groupoid immersions
\begin{equation}
\xymatrix{
G_1\ar[r]^-{i_1}\ar[d]_-{j_1}&G_2\ar[d]^-{j_2}\\
H_1\ar[r]_-{i_2}&H_2
}
\end{equation}
for which $N(j_2,j_1)$ and $N(i_2,i_1)$ are Lie groupoid immersions. Suppose further that the Lie groupoid immersions $\dnc(j_2,j_1)$ and $\dnc(i_2,i_1)$ are $F$-oriented.
Then the following equality of $R$-modules index morphisms holds
\begin{equation}\label{pushforwardfunctIequation}
 j_{2!}\circ i_{1!}= i_{2!}\circ j_{1!}
\end{equation}
\end{theorem}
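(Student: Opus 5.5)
We unfold both sides with the definition $j_!=\Ind^{j}_{F^\bullet}\circ Thom_j$ and reduce to Pushforward functoriality I together with the compatibility axioms for Thom isomorphisms in Definition~\ref{Thomproperties}. Write
$$j_{2!}\circ i_{1!}=\Ind^{j_2}_{F^\bullet}\circ Thom_{j_2}\circ \Ind^{i_1}_{F^\bullet}\circ Thom_{i_1},\qquad i_{2!}\circ j_{1!}=\Ind^{i_2}_{F^\bullet}\circ Thom_{i_2}\circ \Ind^{j_1}_{F^\bullet}\circ Thom_{j_1}.$$
The goal is to commute each middle Thom isomorphism past the adjacent deformation index. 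Then both composites take the form (outer index)$\circ$(index of a normal map)$\circ$(a double Thom isomorphism), and the first theorem identifies the two pairs of indices.

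\textbf{Step 1: naturality of Thom with respect to deformation indices.} We use the hypothesis that $\dnc(i_2,i_1)$ is $F^\bullet$-oriented. This means the normal bundle of the immersion $\dnc(G_2,G_1,i_1)\to\dnc(H_2,H_1,i_2)$ carries a Thom isomorphism. That normal bundle is a family over $[0,1]$: at $t=0$ it is the normal bundle of $N(i_2,i_1)$, and at $t=1$ it is $N(j_2)$ restricted over $G_2$, i.e. $N(H_2,G_2,j_2)$. We use the symmetry Theorem~\ref{symmetryNBthm} to identify $N(N(i_2,i_1))$ with $N(N(j_2,j_1))$ as the same iterated normal groupoid. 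The Thom property should be compatible with restriction along closed saturated inclusions. Since $r_0,r_1$ are restrictions, we get a commutative square
$$Thom_{j_2}\circ \Ind^{i_1}_{F^\bullet}=\Ind^{N(i_2,i_1)\text{-normal}}_{F^\bullet}\circ Thom_{N(j_1)\to N(j_2)},$$
where the second index is that of the immersion of normal bundles $N(G_1\to H_1)\to N(G_2\to H_2)$, i.e. $N(j_2,j_1)$ viewed appropriately. Symmetrically, using the $F^\bullet$-orientation of $\dnc(j_2,j_1)$, we get
$$Thom_{i_2}\circ\Ind^{j_1}_{F^\bullet}=\Ind^{N(i_2,i_1)}_{F^\bullet}\circ Thom'.$$
Concretely, each equality comes from a Thom isomorphism on the whole deformation groupoid commuting with $r_0$ and $r_1$. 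This is where the orientation hypothesis on $\dnc$ rather than just on the endpoints is needed.

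\textbf{Step 2: combine.} After Step 1 we have
$$j_{2!}i_{1!}=\Ind^{j_2}\circ\Ind^{N(i_2,i_1)}\circ Thom'\circ Thom_{i_1},\qquad i_{2!}j_{1!}=\Ind^{i_2}\circ\Ind^{N(j_2,j_1)}\circ Thom''\circ Thom_{j_1}.$$
By Pushforward functoriality I the index parts agree. It remains to check
$$Thom'\circ Thom_{i_1}=Thom''\circ Thom_{j_1}$$
as maps $F^\bullet(G_1)\to F^\bullet$ of the double normal bundle over $G_1$. This double normal bundle is, via Theorem~\ref{symmetryNBthm}, the normal bundle of the composite $G_1\to H_2$, split in two ways. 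Here we invoke the multiplicativity axiom of the Thom property: for $E\oplus E'$, composing the Thom isomorphisms gives the Thom isomorphism of the sum, independently of the order, with orientations compatible under the symmetry flip.

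\textbf{Main obstacle.} The main obstacle is Step 1. One must make precise that the orientation of $\dnc(i_2,i_1)$ restricts at $0$ and $1$ to the orientations used in $Thom_{j_2}$ and in the Thom isomorphism of the normal-level immersion. One must also check that the identifications supplied by the symmetry theorems~\ref{symmetryNBthm} and~\ref{symmetryDNC} match these orientations, including the sign and order issues arising from the flip in Step 2. We would first write the deformation of the normal bundle as a vector bundle groupoid over $\dnc(G_1\to G_2)$ and apply the naturality of Thom under restriction to the fibres at $0$ and $1$. The commutation with $r_0^{-1}$ then follows since $r_0$ is invertible on both sides by axiom (A2).
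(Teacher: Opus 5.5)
Your proposal follows the paper's proof. Step~1 is the commutativity of squares II--V: the Thom isomorphisms of the oriented immersions $\dnc(j_2,j_1)$ and $\dnc(i_2,i_1)$ are natural under the restrictions at $t=0,1$, and Corollary~\ref{symmetryDNC} identifies the resulting $r_1\circ r_0^{-1}$ with the index of a normal map. Step~2 is the squares marked A (Theorem~\ref{pushforwardfunctI}) plus square I. The paper gets square I directly from the square form (\ref{ThomThom}) of the Thom property, applied to the double normal groupoid over $N(i_1)$ and $N(j_1)$, so no direct-sum or ``normal bundle of the composite'' description is needed.

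Your Step~1 labels are off. In the paper's notation the immersion $\dnc(G_2,G_1,i_1)\to\dnc(H_2,H_1,i_2)$ is $\dnc(j_2,j_1)$, and the identities read $T_{j_2}\circ\Ind^{i_1}_{F^\bullet}=\Ind^{N(i_2,i_1)}_{F^\bullet}\circ T_{N(j_2,j_1)}$ and $T_{i_2}\circ\Ind^{j_1}_{F^\bullet}=\Ind^{N(j_2,j_1)}_{F^\bullet}\circ T_{N(i_2,i_1)}$. These are exactly the forms your Step~2 uses, so the argument stands.
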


\subsection*{Applications, examples}

The following theorem is based on the idea exposed above in which we managed to replace a given map into a Morita equivalent one which is an immersion. This idea allows to get the following general theorem as a corollary of the above in a very explicit way. Now, the only extra axioms/properties we need to ask to our \theory-theories are Morita invariance and a geomeric property we call "orientation deformation property", these two properties are again satisfied by the example above of $K$-theory but also for any cohomological theory for differentiable stacks. See definitions \ref{defMoritainv} and \ref{defOdefproperty} for more details on these two properties. The theorem is the following:

\begin{theorem}[Groupoid equivariant wrong way $F^\bullet$-functoriality for \'etale Lie groupoids]\label{Introthmetale}
Consider a \theory-theory $F^\bullet$ with the Thom isomorphism property, satisfying Morita invariance and with the orientation deformation property. Let $G\rightrightarrows M$ be a Lie groupoid.
Let 
\begin{equation}
\xymatrix{
G_1\ar[rd]_-{f_3}\ar[r]^-{f_1}&G_2\ar[d]^-{f_2}\\
&G_3
}
\end{equation}
be a commutative diagram of  $F^\bullet$-oriented $G$-equivariant groupoid morphisms between \'etale Lie groupoids. Then, the following equality of $R$-module morphisms holds:
\begin{equation}
f_{2!}\circ f_{1!}=f_{3!}: F^\bullet(G_1\rtimes G)\longrightarrow F^\bullet(G_3\rtimes G)
\end{equation}
\end{theorem}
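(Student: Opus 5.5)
The plan is to reduce to Pushforward functoriality II, using the same trick as in the motivating example: replace each equivariant morphism by a Morita equivalent groupoid immersion. For an equivariant morphism $f:G_1\to G_2$ between étale groupoids, I set
\[
\tilde f=(\Delta,f): G_1\rtimes G\longrightarrow (G_1^{(0)}\times G_1^{(0)})\times (G_2\rtimes G),
\]
where the first factor is the pair groupoid on the units. Since $G_1$ is étale, this is an immersion of Lie groupoids: the arrow component alone already gives an immersion on arrows. The projection
\[
p:(G_1^{(0)}\times G_1^{(0)})\times (G_2\rtimes G)\to G_2\rtimes G
\]
is a Morita equivalence. Following the introduction, I define
\[
f_!:=(p^{-1})^*\circ \Ind^{\tilde f}_{F^\bullet}\circ Thom_{\tilde f},
\]
using Morita invariance (Definition \ref{defMoritainv}). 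Here an $F^\bullet$-orientation of $f$ means an orientation of the normal bundle groupoid $N(\tilde f)$, which is essentially $TG_1^{(0)}\ltimes f^*T G_2^{(0)}$ twisted by the action of $G$. I would first check that this definition does not depend on the Morita data chosen, using the Morita invariance compatibilities.

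Next I build the square for the composition. Put $M_i=G_i^{(0)}$ and consider
\[
\xymatrix{
G_1\rtimes G\ar[r]^-{\tilde f_1}\ar[d]_-{Id}& (M_1\times M_1)\times (G_2\rtimes G)\ar[d]^-{j_2}\\
G_1\rtimes G\ar[r]_-{\tilde f_3}&(M_1\times M_1)\times(M_2\times M_2)\times (G_3\rtimes G),
}
\]
where $j_2=Id\times \tilde f_2$. This square commutes because $f_3=f_2\circ f_1$, after reordering the pair-groupoid factors through the diagonal on $M_2$. It has exactly the shape of Corollary \ref{cor-monosquare-specialcase} ($G_1=H_1$, $j_1=Id$), so both normal bundle maps are immersions. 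I would then apply Pushforward functoriality II, which gives
\[
j_{2!}\circ\tilde f_{1!}=\tilde f_{3!}'\circ Id_!,
\]
where $\tilde f_3'$ is the bottom immersion. Two identifications remain. First, $Id_!=Id$, since the normal bundle is the zero bundle and the Thom map is trivial. Second, $j_{2!}$ corresponds to $f_{2!}$ under the Morita equivalences, because $j_2$ is a product of $\tilde f_2$ with a pair groupoid and $\dnc$ commutes with such products. Likewise $\tilde f_3'$ and $\tilde f_3$ differ only by a Morita equivalence in the target, so $\tilde f_{3!}'$ matches $f_{3!}$. Composing with the Morita isomorphisms then yields $f_{2!}\circ f_{1!}=f_{3!}$.

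The main obstacle is the orientation hypothesis of Pushforward functoriality II. It requires the immersions $\dnc(j_2,Id)$ and $\dnc(i_2,i_1)$ to be $F^\bullet$-oriented, and these orientations must restrict at $t=0$ and $t=1$ to the given orientations. Concretely, the orientation of $f_3$ has to match the composite of the orientations of $f_1$ and $f_2$, coming from the exact sequence $0\to N(\tilde f_1)\to N(\tilde f_3)\to \tilde f_1^*N(j_2)\to 0$. This is exactly what the orientation deformation property (Definition \ref{defOdefproperty}) should supply. I would first split the sequence equivariantly, using properness of the relevant actions or a homotopy argument over the deformation parameter, and then invoke that property to extend the orientations over the deformation groupoids, compatibly with the Thom isomorphisms.
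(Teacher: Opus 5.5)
Your proposal is correct and follows the paper's proof essentially step for step. It uses the same square with $Id$ on the left and the double-tilde immersions, Corollary~\ref{cor-monosquare-specialcase} for the immersion hypotheses, the orientation deformation property for the orientation hypotheses of Theorem~\ref{pushforwardfunctII}, and Morita invariance (naturality and Thom compatibility) to identify the resulting pushforwards with $f_{2!}$ and $f_{3!}$. The equivariant splitting via properness you suggest is neither available (nothing is assumed proper) nor needed: the orientation deformation property directly provides the orientations of $\dnc(j_2,Id)$ and $\dnc(i_2,i_1)$, and the Thom isomorphism property gives the compatibility of the Thom isomorphisms in the corner square.
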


The above theorem applies directly in, at least, the following three explicit cases: 

\begin{enumerate}
\item The twisted K-theory wrong way functoriality for $G$-manifolds (for a Lie groupoid $G$). Indeed, the main result in \cite{CaWangAENS}, theorem 4.2 in loc.cit, is an immediate consequence of the above theorem. In fact the proof of that theorem was exactly the model proof whose essence we succeded to abstract in the main theorem above. The theorem 4.2 in loc.cit, in turn,  implies Atiyah-Singer's families index theorem in any groupoid equivariant case whenever a topological index is defined.
\item The wrong way functoriality for delocalised cohomology associated to discrete groups acting properly on smooth manifolds. Such functorial property is one of the main theorems in \cite{CRWW}, which was tackled there using the ideas from \cite{CaWangAENS}. Now, the theorem above provides a complete and independent proof. One of the main results in \cite{CRWW} gives a natural relation, via Chern characters, between the wrong way pushforward maps in K-theory and the ones in delocalised cohomology fitting in a Riemann-Roch diagram that implies Atiyah-Singer's index formulae for families in the equivariant setting (for a discrete group acting). 
\item The longitudinal index theorem of Connes-Skandalis, as well as its twisted version. This was already explained in \cite{CaWangI} using as well deformation groupoid techniques.
\end{enumerate}

Let us now state the main new explicit result obtained from the previous theorems.
The result is developped in section \ref{subsectionOrbifold} where we verify that the hypothesis of theorem \ref{Introthmetale} are satisfied once reduced to the \'etale case. In this paper, an orbifold groupoid stands for a proper \'etale Lie groupoid. 

\begin{theorem}[Wrong way functoriality for groupoid equivariant K-theory for orbifold groupoids]
Let $G$ be a Lie groupoid.
Let 
\begin{equation}
\xymatrix{
\mathfrak{X}_ 1\ar[r]^-{f}\ar[dr]_-{h}&\mathfrak{X}_2\ar[d]^-{g}\\
&\mathfrak{X}_3
}
\end{equation}
be a commutative diagram $K$-oriented $G$-equivariant Lie groupoid morphisms between orbifold groupoids. Then, the following diagram of abelian group morphisms commutes:
\begin{equation}
\xymatrix{
K^*_G(\mathfrak{X}_ 1)\ar[r]^-{f_!}\ar[dr]_-{h_!}&K^*_G(\mathfrak{X}_2)\ar[d]^-{g_!}\\
&K^*_G(\mathfrak{X}_3)
}
\end{equation}
\end{theorem}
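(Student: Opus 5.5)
The plan is to derive the statement as a direct application of Theorem~\ref{Introthmetale}, with $F^\bullet = K^*_{top}(C^*(\cdot))$, the $K$-theory of the maximal groupoid $C^*$-algebras. The definition of $K^*_G(\mathfrak{X})$ should identify it with $K^*(C^*(\mathfrak{X}\rtimes G))$, so that $f_!,g_!,h_!$ are, by definition, the pushforward maps produced by the general construction: Thom isomorphism, then deformation index, then Morita isomorphism. Since orbifold groupoids are proper \'etale, the diagram of the statement is a commutative diagram of $G$-equivariant morphisms between \'etale Lie groupoids. It therefore suffices to check that $K$-theory satisfies each hypothesis of Theorem~\ref{Introthmetale}, and that $K$-orientability in the orbifold sense is exactly $F^\bullet$-orientability in the sense of Definition~\ref{Thomproperties}.

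The verification of the \theory-axioms goes as follows.
\begin{enumerate}
\item For (A1), a closed saturated subgroupoid $G_1\subseteq G$ yields an exact sequence $0\to C^*(G|_{U})\to C^*(G)\to C^*(G_1)\to 0$, with $U$ the open complement of the units of $G_1$. The restriction morphism is the induced map in $K$-theory, and transitivity of restrictions is clear.
\item For (A2), the kernel of $r_0$ on $C^*(\dnc(G_2,G_1,j))$ is $C^*$ of the restriction to $(0,1]$, which is $C_0((0,1])\otimes C^*(G_2)$. This algebra is contractible, so $r_0$ is a $K$-isomorphism by the six-term sequence. For non-injective immersions I would use the explicit description of $\dnc$ from section~\ref{sectionDCN+normal} to identify the open part with $G_2\times(0,1]$.
\item Morita invariance follows from the Muhly--Renault--Williams equivalence theorem.
\end{enumerate}

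The Thom isomorphism property requires the most care. For a $G$-equivariant $K$-oriented vector bundle $E$ over an orbifold groupoid $\mathfrak{X}$, the normal bundle groupoid is a vector bundle groupoid of the form $E\rtimes(\mathfrak{X}\rtimes G)$, which is possibly twisted by an action groupoid coming from the Morita trick. I would construct the Thom element as a class in $KK^{\mathfrak{X}\rtimes G}(C_0(\mathfrak X),C_0(E))$ via the Clifford/spinor (Bott--Dirac) construction, following Kasparov and Le Gall's groupoid equivariant $KK$-theory. Descent then gives the Thom map, and Kasparov's equivariant Bott periodicity gives invertibility. Properness of orbifold groupoids guarantees that the Dirac element is an inverse. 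The multiplicativity and naturality axioms of Definition~\ref{Thomproperties} follow from the Kasparov product for Thom elements of direct sums, following Hilsum--Skandalis and Emerson--Meyer.

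The orientation deformation property (Definition~\ref{defOdefproperty}) and the requirement that $\dnc(j_2,j_1)$ be oriented are what I expect to be the main obstacle. The issue is to show that a $K$-orientation of the normal bundle extends over the whole deformation groupoid, and that the Thom class of the deformation restricts at $0$ and $1$ to the given ones. I would try to build the Thom element over $\dnc$ as a homotopy, that is, a class over $[0,1]$, and use homotopy invariance of equivariant $KK$ to compare its restrictions at the endpoints. Once these checks are done, Theorem~\ref{Introthmetale} gives $g_!\circ f_!=h_!$, which is the commutativity of the diagram.
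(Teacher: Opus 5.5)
Your reduction is the paper's: replace each morphism by the immersion $\tilde f=(f,(s,t))$, use the square with $Id_{\mathfrak X_1}$ on the left (Corollary \ref{cor-monosquare-specialcase}), and use Morita invariance. The gap is the Thom step, which the paper singles out as the main difficulty.

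The normal groupoid of $\tilde f\times Id_G$ is not of the form $E\rtimes(\mathfrak X_1\rtimes G)$ with $E$ a vector bundle over the units. Its arrow bundle has rank $\dim M_1+\dim M_2$, while its unit bundle $N(M_2\times M_1,M_1)\cong f_0^*TM_2$ has rank $\dim M_2$, so it has a nonzero core $TM_1$. Proposition \ref{normalgrpdisoequivariant} identifies it with $((TM_1\ltimes f_0^*TM_2)\rtimes\mathfrak X_1)\rtimes G$, where $TM_1$ acts on $f_0^*TM_2$ by translation through $df_0$. A Bott--Dirac class in $KK^{\mathfrak X_1\rtimes G}(C_0(M_1),C_0(E))$ followed by descent only reaches $K^*(E\rtimes(\mathfrak X_1\rtimes G))$, not $K^*_G(N(\tilde f))$.

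The missing construction is that of Definition \ref{appendixThomfdef}, in four steps:
first, the equivariant Thom isomorphism for the $spin^c$ bundle $T^*M_1\oplus f_0^*TM_2$ (this is where the $K$-orientation hypothesis enters, and why it involves $T^*M_1$);
second, the Fourier transform along $TM_1$;
third, the deformation $\epsilon\,df_0$, $\epsilon\in[0,1]$, from the trivial action to the $df_0$-action;
fourth, the isomorphism of Proposition \ref{normalgrpdisoequivariant}.
Without this, your claim that orbifold $K$-orientability ``is exactly'' $F^\bullet$-orientability is unsupported. (Incidentally, equivariant Bott periodicity needs no properness.)

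For the same reason you cannot apply Theorem \ref{Introthmetale} as a black box. That would require the Thom isomorphism property and the orientation deformation property for arbitrary vb-groupoids, and Thom isomorphisms for vb-groupoids with nonzero core are not available. The Hilsum--Skandalis/Emerson--Meyer multiplicativity you cite only covers groupoids of the form $E\rtimes\mathcal G$.

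The paper instead opens up the proof of Theorem \ref{pushforwardfunctII} and checks only the instances used there:
\begin{itemize}
\item the identity $T_{N(j_2,j_1)}\circ T_{i_1}=T_{N(i_2,i_1)}$ (diagram I, with $j_1=Id$), proved via the explicit forms \eqref{normalj} and \eqref{normali} of the normal maps (again of the semi-direct type above) and Le Gall's descent;
\item the $K$-orientability of $\dnc(i_2,i_1)$ and $\dnc(j_2,j_1)$, needed for diagrams II--V.
\end{itemize}
For the second point, your ``class over $[0,1]$'' does not say where an orientation on the deformation groupoid comes from. The paper's input is a gluing: the cocycle $G_2\dashrightarrow spin^c(n)$ induced by $j_2$ and the cocycle $N(G_2,G_1)\dashrightarrow spin^c(n)$ induced by $N(j_2,j_1)$ combine, by functoriality of $\dnc$, into a single cocycle $\dnc(G_2,G_1)\dashrightarrow spin^c(n)$ attached to $\dnc(j_2,j_1)$.
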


\begin{remark}
\begin{itemize}
\item One of the main difficulty in order to prove the latter result is to show the existence of the Thom isomorphism.
Such isomorphism is well-known for the K-theory of vector bundles over spaces (including the groupoid equivariant case),  nevertheless in the case of an arbitrary vb-groupoids it is, as far as we are aware, still  unknown.
The étaleness assumption plays a crucial role here, since it allows an explicit description of the normal vb-groupoid, and consequently implies the existence of a Thom isomorphism. This is explained at the beginning of section \ref{subsectionOrbifold}.
\item In the setting above, the condition of being K-oriented is not restrictive if one assumes that $G$ acts properly in the domain of the corresponding map. This is how it will be used in a future work to assembly these pushforward maps as was done for the manifold case in \cite{CRWW}.
\item The twisted version of the theorem above, which give the exact generalization to orbifolds of theorem 4.2 in \cite{CaWangAENS}, is discussed in section \ref{subsectiontwistedorb}.
\item We have been saying that we can apply similar ideas to other theories, and it is indeed the case, for example as mentioned above to delocalised cohomology of discrete groups acting on smooth manifolds. In order to pass from smooth manifolds to orbifolds or more sophisticated geometric objects, one needs to properly introduce and work with appropriate Schwartz type algebras on the deformation orbifold groupoids. We will see how this is possible and work the details in a future work.
\item The properties we are asking to our \theory-theory seem to be quite standard in other groupoid cohomological theories. So even without the use of algebras, there is the possibility to construct wrong way maps and obtain functoriality using deformation groupoids if one has indeed these properties. Definitely a direction to explore.
\end{itemize}
\end{remark}

{\bf Acknowledgements:} We would like to thank Thomas Schick and Georges Skandalis for very stimulating and inspiring exchanges and conversations. 


\section{Deformation and normal groupoids and deformation indices for immersions}\label{sectionDCN+normal}

For basic definitions, properties and notations on Lie groupoids the reader may have a look to section 2 in \cite{CaWangAENS}. Classic references on Lie groupoids are \cite{MM} or \cite{Mac}. 

We will be denoting a Lie groupoid by $G\rightrightarrows G_0$ as classically whenever we need to emphasize the unit manifold $G_0$. 
%
%
%
\subsection{Normal groupoid associated to an immersion}
Let $(X,Y,j)$ be a  triple consisting of a couple of  $C^\infty$-manifolds $X$ and $Y$ together with a (not necessarily injective) immersion $j:Y\to X$. For such a triple we can define the normal bundle as the quotient vector bundle over $Y$
\begin{equation}
N(X,Y,j)=j^*TX/TY.
\end{equation}

A morphism between triples $(X,Y,j)$ and $(X',Y',j')$ as above consist of a couple $(f,f_0)$ of $C^\infty$ maps such that the following diagram is commutative
\begin{equation}
\xymatrix{
X\ar[r]^-{f}&X'\\
Y\ar[u]^-j\ar[r]_-{f_0}&Y'\ar[u]_-{j'}.
}
\end{equation}
Such commutative square induces a vector bundle map, named normal differential of $(f,f_0)$, 
\begin{equation}
N(f,f_0):N(X,Y,j)\to N(X',Y',j')
\end{equation}
induced in a canonical way by the differential $df:TX \to TX'$.
The following lemma is an immediate consequence of the chain rule. 
\begin{lemma}[Functoriality of the normal bundle construction]
Given two morphisms of triples as above
\begin{equation}
\xymatrix{
X\ar[r]^-{f}&X'\ar[r]^-{g}&X''\\
Y\ar[u]^-j\ar[r]_-{f_0}&Y'\ar[u]_-{j'}\ar[r]_-{g_0}&Y''\ar[u]_-{j''}.
}
\end{equation}
we have that
\begin{equation}
N(g\circ f,g_0\circ f_0)=N(g,g_0)\circ N(f,f_0).
\end{equation}
Moreover
\begin{equation}
N(Id_X,Id_Y)=Id_{(X,Y,j)}.
\end{equation}
\end{lemma}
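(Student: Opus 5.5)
The plan is to reduce the lemma to the chain rule for the differentials $df$, $dg$ and $d(g\circ f)$, together with the universal property of quotient bundles.

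First I will recall how $N(f,f_0)$ is defined. The commutative square gives $f\circ j=j'\circ f_0$. Pulling back $df:TX\to TX'$ along $j$ produces a bundle map $j^*TX\to f_0^*j'^*TX'$ covering $f_0:Y\to Y'$, namely $(y,v)\mapsto (f_0(y),df_{j(y)}v)$. Differentiating the identity $f\circ j=j'\circ f_0$ gives
$$df\circ dj=dj'\circ df_0,$$
so this map sends $dj(TY)$ into $dj'(TY')$. It therefore descends to a map of quotients
$$N(f,f_0)\colon j^*TX/TY\to j'^*TX'/TY'$$
covering $f_0$. Concretely, $N(f,f_0)[v]=[df(v)]$.

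Next I will compose. The pasted diagram gives a morphism of triples $(g\circ f,g_0\circ f_0)$. For $y\in Y$ and $v\in T_{j(y)}X$, the chain rule $d(g\circ f)=dg\circ df$ gives
$$N(g\circ f,g_0\circ f_0)[v]=[dg(df(v))]=N(g,g_0)\bigl(N(f,f_0)[v]\bigr).$$
The base maps also agree, since $g_0\circ f_0$ is by definition the composite. This proves the first identity.

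For the identity morphism, $d(\mathrm{Id}_X)=\mathrm{Id}_{TX}$, so $N(\mathrm{Id}_X,\mathrm{Id}_Y)[v]=[v]$. This is the identity of $N(X,Y,j)$.

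The only point needing care is that $j$ need not be injective. For that reason every map has to be written fiberwise at points $y\in Y$, using the pullback bundle $j^*TX$ rather than the image of $j$; the well-definedness on quotients is then exactly the differentiated commutativity relation above. I do not expect any genuine obstacle.
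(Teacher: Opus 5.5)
Your proposal is correct and matches the paper, which gives no separate proof and only says the lemma is an immediate consequence of the chain rule; your steps (well-definedness on the quotients from $df\circ dj=dj'\circ df_0$, then $d(g\circ f)=dg\circ df$ and $d(\mathrm{Id}_X)=\mathrm{Id}_{TX}$) simply spell out that remark. The one slip is notational: the map $(y,v)\mapsto (f_0(y),df_{j(y)}v)$ covering $f_0$ takes values in the pullback of $TX'$ along $j'$ (a bundle over $Y'$), not in its further pullback along $f_0$ (a bundle over $Y$), though your explicit formula makes the intended meaning clear.
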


\begin{corollary}
Let $(G_2,G_1,j)$ be a Lie groupoid immersion. Then,  there is a Lie groupoid structure on 
\begin{equation}
N(G_2,G_1,j)\rightrightarrows N(G_2^{(0)},G_1^{(0)},j_0)
\end{equation}
induced by the normal bundle construction. That is, with Lie groupoid structural maps given by 
$s=N(s_2,s_1), t=N(t_2,t_1), m=N(m_2,m_1), u=N(u_2,u_1), \iota=N(\iota_2,\iota_1)$.
This Lie groupoid is called the normal (bundle) groupoid associated to $(G_2,G_1,j)$.
\end{corollary}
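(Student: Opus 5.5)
The plan is to transport every structural identity of the groupoid $G_2$ through the normal bundle functor of the preceding lemma. Since $j:G_1\to G_2$ is a Lie groupoid morphism, the structure maps of $G_2$ restrict to those of $G_1$: $s_2\circ j=j_0\circ s_1$, $t_2\circ j=j_0\circ t_1$, $u_2\circ j_0=j\circ u_1$, $\iota_2\circ j=j\circ\iota_1$, and $m_2\circ(j\times j)=j\circ m_1$ on $G_1^{(2)}$. Each of these is a morphism of triples, so it has a normal differential, giving the maps $N(s_2,s_1)$, $N(t_2,t_1)$, $N(u_2,u_1)$, $N(\iota_2,\iota_1)$ and $N(m_2,m_1)$ of the statement.

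First I check that these maps land where they should. For the multiplication I need the triple $(G_2^{(2)},G_1^{(2)},j\times j)$, which is a legitimate immersion because $G_2^{(2)}$ is a fibered product over submersions. I also need to identify its normal bundle with the fibered product $N(G_2,G_1,j)\,{}_{N(s)}\!\times_{N(t)} N(G_2,G_1,j)$. Second, I will show that $N(s)$ and $N(t)$ are surjective submersions. This is done by normal differentiation of the identities $s_2\circ u_2=\mathrm{id}$ and $s_1\circ u_1=\mathrm{id}$: by the lemma, $N(s)\circ N(u)=\mathrm{id}$, so $N(s)$ has a section. Fiberwise linear surjectivity comes from the surjectivity of $ds_2$ onto $TG_2^{(0)}$ together with that of $ds_1$ onto $TG_1^{(0)}$. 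Third, the groupoid axioms follow formally from functoriality. Associativity $m\circ(m\times\mathrm{id})=m\circ(\mathrm{id}\times m)$ is an equality of morphisms of triples $(G_2^{(3)},G_1^{(3)})\to(G_2,G_1)$, and applying $N$ with the chain rule (the preceding lemma) gives associativity of $N(m)$. Likewise $s\circ m=s\circ pr_2$, $m\circ(u\circ t,\mathrm{id})=\mathrm{id}$ and $m\circ(\iota,\mathrm{id})=u\circ s$ pass to normal bundles. I will also use $N(\mathrm{id},\mathrm{id})=\mathrm{id}$ and the fact that $N$ commutes with products and pairings, e.g. $N((f,g),(f_0,g_0))=(N(f,f_0),N(g,g_0))$, which is immediate from the definition via the differential.

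The main obstacle is the identification
\[
N(G_2^{(2)},G_1^{(2)},j\times j)\cong N(G_2,G_1,j)\times_{N(G_2^{(0)},G_1^{(0)},j_0)}N(G_2,G_1,j).
\]
This amounts to a transversality statement, namely that the normal bundle of a fibered product over submersions is the fibered product of normal bundles. I would verify it pointwise. $T_{(g,h)}G_2^{(2)}$ is the fiber product of $T_gG_2$ and $T_hG_2$ over $TG_2^{(0)}$, and similarly for $G_1$. The map between the two quotients is injective by a diagram chase that uses the surjectivity of $ds_1$. It is surjective by a dimension count, which relies on the equality $\mathrm{rk}\,N(s)$-kernel $=\dim G_2-\dim G_1-(\dim G_2^{(0)}-\dim G_1^{(0)})$, itself coming from the second step. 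Smoothness of all the resulting maps is automatic, since normal differentials of smooth maps are smooth vector bundle maps.
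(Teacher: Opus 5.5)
Your proposal is correct and takes the same route as the paper, which obtains the groupoid structure by applying the functoriality lemma for $N$ (the chain rule) to the structure maps of $G_2$ and $G_1$ and to the identities they satisfy; you also supply points the paper leaves implicit, namely that $N(s_2,s_1)$ and $N(t_2,t_1)$ are surjective submersions and that $N(G_2^{(2)},G_1^{(2)},j^{(2)})$ identifies with the fibered product $N(G_2,G_1,j)\,{}_{N(s)}\!\times_{N(t)}N(G_2,G_1,j)$. One small correction: injectivity of that identification relies on $dj_0$ being injective (which holds because $j_0$ is an immersion), not on surjectivity of $ds_1$, since from $X=dj(A)$, $Y=dj(B)$ and $ds_2X=dt_2Y$ you get $dj_0(ds_1A)=dj_0(dt_1B)$, hence $ds_1A=dt_1B$.
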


{\bf Notation:} Given a Lie groupoid immersion as above, we will use the short handed notation

$$N(j)\rightrightarrows N(j_0)$$
for the associated normal groupoid.

\subsubsection{Symmetry of the normal functor and of the normal groupoids}

One of the fondamental properties that we will need for the pushforward functoriality will be what we call the symmetry property of the normal functor that we now explain:

Suppose there is a commutative diagram of manifold immersions
\begin{equation}\label{diag4grpds}
\xymatrix{
L_1\ar[r]^-{i_1}\ar[d]_-{j_1}&L_2\ar[d]^-{j_2}\\
M_1\ar[r]_-{i_2}& M_2,
}
\end{equation}
such that the normal bundle maps $N(j_2,j_1)$ and $N(i_2,i_1)$ are immersions as well. In the case the immersions are embeddings or inclusions of submanifolds it is not hard to see that there is a diffeomorphism between the total spaces
$$N(N(j_2,j_1))\cong N(N(i_2,i_1)).$$
In general, this property still holds. In fact, by combining straightforward computations like the one described above together with the functoriality of the normal bundle construction we have the following property. This is the main content of the article \cite{Kar}.

\begin{theorem}[Normal groupoid symmetry]\label{symmetryNBthm}
Consider a commutative square of Lie groupoid immersions
\begin{equation}\label{diag4grpds}
\xymatrix{
G_1\ar[r]^-{i_1}\ar[d]_-{j_1}&G_2\ar[d]^-{j_2}\\
H_1\ar[r]_-{i_2}&H_2,
}
\end{equation}
 such that the normal bundle maps $N(j_2,j_1)$ and $N(i_2,i_1)$ are Lie groupoids immersions as well (between the respective normal groupoids). Then, there is a canonical Lie groupoid isomorphism 
\begin{equation}
\xymatrix{
N(N(j_2,j_1))\ar@<.5ex>[d]\ar@<-.5ex>[d] \ar[r]^-\cong & N(N(i_2,i_1)) \ar@<.5ex>[d]\ar@<-.5ex>[d] \\
N(N(j_2^{(0)},j_1^{(0)}))\ar[r]_-\cong & N(N(i_2^{(0)},i_1^{(0)})).
}
\end{equation}
between the normal groupoids associated to $N(j_2,j_1)$ and $N(i_2,i_1)$.

\end{theorem}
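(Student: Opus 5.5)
The plan is to first establish the symmetry at the level of manifolds (triples), functorially in the square, and then to deduce the groupoid statement by applying this manifold-level isomorphism to the squares formed by the structural maps.

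\textbf{Step 1: the manifold case.} Consider a commutative square of immersions $L_1\to L_2$, $M_1\to M_2$ as in (\ref{diag4grpds}), with $N(j_2,j_1)$ and $N(i_2,i_1)$ immersions. I would construct a canonical diffeomorphism $\Phi:N(N(j_2,j_1))\to N(N(i_2,i_1))$ as a quotient of the canonical flip of the double tangent bundle.

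Both sides are vector bundles over $L_1$, or more precisely double vector bundles. Pull everything back to $L_1$. Using the canonical involution $\kappa:TTM_2\to TTM_2$, we have
\[
N(N(j_2,j_1)) = T\bigl(N(j_2)\bigr)|_{N(j_1)}\big/ T N(j_1).
\]
Here $T(N(j_2))$ is itself a quotient of $T(j_2^*TM_2)$, and so the whole object is a subquotient of $TTM_2$ restricted over $L_1$. The same description holds for $N(N(i_2,i_1))$, with the roles of $L_2$ and $M_1$ exchanged, and $\kappa$ interchanges the two subquotients.

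To make this rigorous without choices, I would work locally. Around a point of $L_1$, the immersions are embeddings after shrinking, so I may take adapted coordinates in which all four maps are linear inclusions $\mathbb{R}^a\subset\mathbb{R}^b$, $\mathbb{R}^a\subset\mathbb{R}^c$, $\mathbb{R}^b,\mathbb{R}^c\subset\mathbb{R}^d$. In these coordinates the two iterated normal bundles become $L_1\times(\text{fibre data})$ built from $\mathbb{R}^b/\mathbb{R}^a$, $\mathbb{R}^c/\mathbb{R}^a$ and $\mathbb{R}^d/(\mathbb{R}^b+\mathbb{R}^c)$. The flip is the coordinate swap. Finally I check that the transition maps (second-order Taylor terms) are intertwined by the swap, which is exactly the symmetry of second derivatives encoded by $\kappa$.

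\textbf{Step 2: functoriality.} I would show that $\Phi$ is natural: given a morphism of commutative squares, that is, a cube of maps, the induced maps $N(N(\cdot))$ commute with $\Phi$. This follows from the naturality of $\kappa$ with respect to $TTf$, together with the functoriality lemma for $N$ (chain rule) applied twice.

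\textbf{Step 3: the groupoid structure.} Apply Step 1 to the square of arrow manifolds and to the square of unit manifolds, giving $\Phi$ and $\Phi^{(0)}$. The groupoid structure maps of $N(N(j_2,j_1))$ are $N(N(s),N(s))$ and so on, by iterating the corollary defining normal groupoids. Each structure map comes from a morphism of squares: $s,t,u,\iota$ give cubes, and $m$ gives a cube on the squares of composable pairs. For $m$, I would also check that the space of composable arrows of the iterated normal groupoid identifies with $N(N(\cdot))$ of the square of $G^{(2)}$'s. This holds because $s,t$ are submersions, so fibre products commute with $N$, using the transversality implicit in the corollary. Naturality from Step 2 then shows that $\Phi$ intertwines $s,t,m,u,\iota$, so $(\Phi,\Phi^{(0)})$ is a Lie groupoid isomorphism.

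\textbf{Main obstacle.} I expect the main obstacle to be Step 1 for non-injective immersions, together with the compatibility of the fibre products in Step 3. For the immersion issue, I would handle it by noting that all constructions are pullbacks along the maps from $L_1$, hence local on $L_1$, so the local embedding reduction suffices and the local isomorphisms glue by canonicity.
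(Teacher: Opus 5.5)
Your proposal is correct and follows the route the paper indicates. The paper itself only sketches this route and defers the details to \cite{Kar}: prove the manifold-level symmetry by a local computation in the embedded case, where the swap is coordinate-independent by the symmetry of second derivatives, and then get the groupoid isomorphism from the functoriality of the normal bundle construction applied to the structure maps. Two small points. First, say explicitly that your simultaneous linear adapted coordinates exist because the hypothesis amounts to the clean-intersection condition (3) of Proposition \ref{prop-2immer}. Second, your first displayed formula describes $N(N(i_2,i_1))$ rather than $N(N(j_2,j_1))$; this is a harmless slip.
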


\subsection{Deformation to the normal cone groupoid associated to an immersion}

Let $(X,Y,j)$ be a  triple as in the previous section. For such a triple we can consider the set
\begin{equation}
\dnc(X,Y,j)=N(X,Y,j)\bigsqcup X\times \mathbb{R}^*.
\end{equation}

We will briefly describe the deformation to the normal cone $C^\infty$-structure on the set above. Let us first consider the case where $M=\mathbb{R}^p\times \mathbb{R}^{n-p}$
and $X=\mathbb{R}^p \times \{ 0\}$ ( here we
identify  $X$ canonically with $ \mathbb{R}^p$). We denote by
$q=n-p$ and by $\dnc_{p}^{n}$ for $(\mathbb{R}^n,\mathbb{R}^p)$ as above. In this case
we   have that $\dnc_{p}^{n}=\mathbb{R}^p \times \mathbb{R}^q \times \mathbb{R}$ (as a
set). Consider the
bijection  $\psi: \mathbb{R}^p \times \mathbb{R}^q \times \mathbb{R} \rightarrow
\dnc_{p}^{n}$ given by
\begin{equation}\label{psi}
\psi(x,\xi ,t) = \left\{
\begin{array}{cc}
(x,\xi ,0) &\mbox{ if } t=0 \\
(x,t\xi ,t) &\mbox{ if } t\neq0
\end{array}\right.
\end{equation}
whose  inverse is given explicitly by
$$
\psi^{-1}(x,\xi ,t) = \left\{
\begin{array}{cc}
(x,\xi ,0) &\mbox{ if } t=0 \\
(x,\frac{1}{t}\xi ,t) &\mbox{ if } t\neq0
\end{array}\right.
$$
We can consider the $C^\infty$-structure on $\dnc_{p}^{n}$
induced by this bijection.

We pass now to the general case. A local chart
$(\mathscr{U},\phi)$ of $M$ at $x$  is said to be an $X$-slice   if
\begin{itemize}
\item[1)]  $\mathscr{U}$  is an open neighborhood of $x$ in $M$ and  $\phi : \mathscr{U}  \rightarrow U \subset \mathbb{R}^p\times \mathbb{R}^q$ is a diffeomorphsim such that $\phi(x) =(0, 0)$.
\item[2)]  Setting $V =U \cap (\mathbb{R}^p \times \{ 0\})$, then
$\phi^{-1}(V) =   \mathscr{U} \cap X$ , denoted by $\mathscr{V}$.
\end{itemize}
With these notations understood, we have $(U,V)\subset \dnc_{p}^{n}$ as an
open subset.   For $x\in \mathscr{V}$ we have $\phi (x)\in \mathbb{R}^p
\times \{0\}$. If we write
$\phi(x)=(\phi_1(x),0)$, then
$$ \phi_1 :\mathscr{V} \rightarrow V \subset \mathbb{R}^p$$
is a diffeomorphism.  Define a function
\begin{equation}\label{phi}
\tilde{\phi}:(\mathscr{U},\mathscr{V}) \rightarrow (U,V)
\end{equation}
by setting
$\tilde{\phi}(v,\xi ,0)= (\phi_1 (v),d_N\phi_v (\xi ),0)$ and
$\tilde{\phi}(u,t)= (\phi (u),t)$
for $t\neq 0$. Here
$d_N\phi_v: N_v \rightarrow \mathbb{R}^q$ is the normal component of the
 derivative $d_v\phi$ for $v\in \mathscr{V}$. It is clear that $\tilde{\phi}$ is
 also a  bijection. In particular,  it induces a $C^{\infty}$ structure on $_{\mathscr{V}}^{\mathscr{U}}$.
Now, let us consider an atlas
$ \{ (\mathscr{U}_{\alpha},\phi_{\alpha}) \}_{\alpha \in \Delta}$ of $M$
 consisting of $X$-slices. The following results are proved in \cite{Ca2}

\begin{proposition}[proposition 3.1, \cite{Ca2}]
The charts recalled above form a $C^\infty$-atlas on $\dnc(X,Y,j)$. We refer as the deformation to the normal cone to the $C^\infty$-manifold $\dnc(X,Y,j)$ with $C^\infty$-structure defined by this atlas. \\
The deformation to the normal cone $\dnc(X,Y,j)$ is Hausdorff if and only if $j$ is an injective immersion.
\end{proposition}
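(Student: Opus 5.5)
The plan is to reduce everything to the model case $\dnc_p^n$ via $X$-slices. I will check two things: that the transition maps between the induced charts are smooth, and then the Hausdorff criterion. (I read the statement with $X$ an immersed image $j:Y\to X$; in the slice description the ambient manifold is called $M$ and the submanifold $X$.)

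\textbf{Local slices.} Because $j$ is an immersion, every $y\in Y$ has a neighbourhood $\mathscr{V}_y$ on which $j$ is an embedding onto a submanifold of some open set of the ambient manifold. So slice charts exist around every point of $j(Y)$. Away from $j(Y)$, or more precisely on $X\times\mathbb{R}^*$, we use ordinary charts times $\mathbb{R}^*$; these are smooth trivially.

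\textbf{Smooth transitions.} The charts cover $\dnc(X,Y,j)$, so the key step is the transition map. Take two slices $\phi,\phi'$ and set $h=\phi'\circ\phi^{-1}$, written $h(x,\xi)=(h_1(x,\xi),h_2(x,\xi))$ with $h_2(x,0)=0$. In the coordinates $\psi$, the transition is
\[
(x,\xi,t)\mapsto \Bigl(h_1(x,t\xi),\ \tfrac{1}{t}h_2(x,t\xi),\ t\Bigr)\quad (t\neq0),
\]
and at $t=0$ it is $(x,\xi,0)\mapsto (h_1(x,0),\partial_\xi h_2(x,0)\xi,0)$. By Hadamard's lemma, $h_2(x,\eta)=\int_0^1\partial_\eta h_2(x,s\eta)\,ds\cdot \eta$. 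Hence
\[
\tfrac1t h_2(x,t\xi)=\int_0^1\partial_\eta h_2(x,st\xi)\,ds\cdot\xi,
\]
which is smooth in $(x,\xi,t)$ jointly and restricts at $t=0$ to the normal derivative $d_N h$. This shows the transitions are smooth, so the charts form a $C^\infty$-atlas. The chart maps are bijections onto open subsets, and the domains are open because the preimage of $\mathscr{U}\times\mathbb{R}^*$ together with the normal part over $\mathscr{V}$ is open.

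\textbf{Hausdorff criterion.} Suppose $j$ is injective. Two points of $N$ over distinct $y\neq y'$ are separated as follows. If $j(y)\neq j(y')$, separate them using disjoint ambient neighbourhoods. If $j(y)=j(y')$, which would require non-injectivity, the argument fails. Points of $X\times\mathbb{R}^*$ are separated from each other by the topology of $X\times\mathbb{R}^*$, and from normal points using $t$, via $|t|<\varepsilon$ against $|t|>\varepsilon$. Two normal vectors over the same $y$ are separated inside one chart. I expect the main obstacle to be injective immersions that are not embeddings. Even then, separation only needs local slices around $y$ and $y'$ that are disjoint in $X$. Choosing small $\mathscr{V}_y$ and $\mathscr{V}_{y'}$ with disjoint images and then disjoint ambient opens $\mathscr{U},\mathscr{U}'$ still works pointwise. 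The charts are taken locally along $Y$, not along $j(Y)$.

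\textbf{Converse.} Suppose $j(y)=j(y')=x$ with $y\neq y'$. Consider the curve $(x,t)$ in $X\times\mathbb{R}^*$ as $t\to0$. In the chart at $y$ it converges to $(y,0,0)$, and in the chart at $y'$ it converges to $(y',0,0)$. These are distinct points, so a single sequence has two limits and the space is not Hausdorff.
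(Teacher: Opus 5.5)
Your proof is correct and is essentially the standard argument behind the cited result (the paper itself only cites it). The transition maps are smooth by the Hadamard expansion $\tfrac1t h_2(x,t\xi)=\int_0^1\partial_\eta h_2(x,st\xi)\,ds\cdot\xi$. The Hausdorff criterion reduces to separating normal vectors over points with distinct images in the ambient manifold, and when $j(y)=j(y')$ with $y\neq y'$, the curve $(j(y),t)$ has the two distinct limits $0_y$ and $0_{y'}$ as $t\to 0$.

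One precision is worth adding. When $j$ is not an embedding, $h_2(x,0)=0$ holds only for $x\in\phi_1(\mathscr{V}\cap\mathscr{V}')$. That is exactly where the $t=0$ part of the chart overlap lies, and the rest of the overlap sits in $t\neq 0$, where smoothness is automatic, so your argument goes through as written.
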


\begin{proposition}[proposition 3.4, \cite{Ca2}]
Let $(f,f_0)$ be a morphism between triples $(X,Y,j)$ and $(X',Y',j')$ as above. The morphism
\begin{equation}
\dnc(f,f_0):\dnc(X,Y,j)\to \dnc(X',Y',j')
\end{equation}
defined by 
\begin{equation}
\dnc(f,f_0)(V,0)=(N(f,f_0)(V),0)
\end{equation}
for $V\in N(X,Y,j)$, and by
\begin{equation}
\dnc(f,f_0)(x,t)=(f(x),t)
\end{equation}
for $(x,t)\in X\times \mathbb{R}^*$, is a $C^\infty$-map.
\end{proposition}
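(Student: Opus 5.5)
We have to prove that $\dnc(f,f_0)$ is $C^\infty$, and the question is local. We check smoothness separately on the open piece $X\times\mathbb{R}^*$ and near points of the normal bundle part $N(X,Y,j)\times\{0\}$.

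On $X\times\mathbb{R}^*$ the deformation structure is just the product structure, and the map is $(x,t)\mapsto (f(x),t)$. This is smooth.

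Now fix a point $(V,0)$ with $V\in N_y(X,Y,j)$. Since $j$ is an immersion, we may pass to small neighbourhoods and assume $j$ is an embedding near $y$, and likewise $j'$ near $y'=f_0(y)$. Choose a $Y'$-slice chart $(\mathscr{U}',\phi')$ of $X'$ around $j'(y')$. Then choose a $Y$-slice chart $(\mathscr{U},\phi)$ around $j(y)$ small enough that $f(\mathscr{U})\subset\mathscr{U}'$ and $f_0(\mathscr{V})\subset\mathscr{V}'$. In these coordinates $F:=\phi'\circ f\circ\phi^{-1}$ is a smooth map
$$F:U\subset\mathbb{R}^p\times\mathbb{R}^q\to U'\subset\mathbb{R}^{p'}\times\mathbb{R}^{q'}, \qquad F(x,\xi)=(F_1(x,\xi),F_2(x,\xi)).$$
Commutativity of the square together with the slice property gives $F(x,0)=(F_1(x,0),0)$, that is, $F_2(x,0)=0$. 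Through the charts $\tilde\phi,\tilde\phi'$ and the bijections $\psi$, the map $\dnc(f,f_0)$ is read as follows:
$$(x,\xi,t)\longmapsto \Big(F_1(x,t\xi),\ \tfrac{1}{t}F_2(x,t\xi),\ t\Big)\ \ (t\neq 0),\qquad (x,\xi,0)\longmapsto \big(F_1(x,0),\ \partial_\xi F_2(x,0)\,\xi,\ 0\big).$$
At $t=0$ this is exactly $N(f,f_0)$ in coordinates, because the normal component of $dF$ along $\mathbb{R}^p\times\{0\}$ is $\partial_\xi F_2(x,0)$. We also need the convention $\tilde\phi(v,\xi,0)=(\phi_1(v),d_N\phi_v(\xi),0)$ to match, which holds by definition.

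The key step, and the only real obstacle, is showing that these two formulas glue to a smooth map on an open subset of $\mathbb{R}^p\times\mathbb{R}^q\times\mathbb{R}$. For this we use Hadamard's lemma. Since $F_2(x,0)=0$, we can write
$$F_2(x,\eta)=\sum_k \eta_k\, h_k(x,\eta), \qquad h_k(x,\eta)=\int_0^1 \partial_{\eta_k}F_2(x,s\eta)\,ds,$$
with each $h_k$ smooth. Hence for $t\neq0$,
$$\tfrac{1}{t}F_2(x,t\xi)=\sum_k\xi_k\,h_k(x,t\xi).$$
The right-hand side is smooth in $(x,\xi,t)$ on all of the relevant open set, and at $t=0$ it equals $\sum_k\xi_k\,\partial_{\xi_k}F_2(x,0)$, which agrees with the $t=0$ formula. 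The first component $F_1(x,t\xi)$ is obviously smooth. One checks that the domain $\{(x,\xi,t): (x,t\xi)\in U\}$ is open and maps into the corresponding open set for $U'$.

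Finally, different choices of slice charts are related by transition maps, and these are smooth by the previous proposition (the atlas statement). Therefore smoothness in one pair of charts near each point suffices, and $\dnc(f,f_0)$ is $C^\infty$. In the non-injective immersion case Hausdorffness may fail, but this does not affect the argument, since smoothness is a local property checked chart by chart.
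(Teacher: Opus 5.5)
Your argument is correct and is essentially the standard one. The paper gives no proof and simply cites \cite{Ca2}, where the result is obtained the same way: pass to slice charts, then use a Taylor (Hadamard) expansion in the normal variable to show that $\frac{1}{t}F_2(x,t\xi)$ extends smoothly across $t=0$ with value $\partial_\xi F_2(x,0)\,\xi$, which is the normal differential in coordinates. The one detail to add is that your integral formula for $h_k$ needs $(x,s\eta)\in U$ for $s\in[0,1]$; you can arrange this by shrinking $U$ to have convex normal slices, or by working near each point $(x_0,\xi_0,0)$, where $(x,t\xi)$ stays close to $(x_0,0)$.
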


The following lemma is a straightforward computation

\begin{lemma}[Functoriality properties of the DNC construction]
Given two morphisms of triples as above
\begin{equation}
\xymatrix{
X\ar[r]^-{f}&X'\ar[r]^-{g}&X''\\
Y\ar[u]^-j\ar[r]_-{f_0}&Y'\ar[u]_-{j'}\ar[r]_-{g_0}&Y''\ar[u]_-{j''}.
}
\end{equation}
we have that
\begin{equation}
\dnc(g\circ f,g_0\circ f_0)=\dnc(g,g_0)\circ \dnc(f,f_0).
\end{equation}
Moreover
\begin{equation}
\dnc(Id_X,Id_Y)=Id_{\dnc(X,Y,j)}.
\end{equation}
\end{lemma}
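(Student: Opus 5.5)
The statement to prove is the last lemma: functoriality of the DNC construction, $\dnc(g\circ f,g_0\circ f_0)=\dnc(g,g_0)\circ\dnc(f,f_0)$ and $\dnc(Id_X,Id_Y)=Id$. Since by proposition 3.4 of \cite{Ca2} each $\dnc(f,f_0)$ is a well-defined $C^\infty$-map, it suffices to verify the equalities as maps of sets. Smoothness of the composite is then automatic.

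I split $\dnc(X,Y,j)=N(X,Y,j)\times\{0\}\sqcup X\times\mathbb{R}^*$. The maps $\dnc(f,f_0)$ preserve the parameter $t$, so each of the two pieces is sent into the corresponding piece of the target. The verification therefore reduces to two separate checks.

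\emph{On $X\times\mathbb{R}^*$.} We have
\[\dnc(g,g_0)(\dnc(f,f_0)(x,t))=(g(f(x)),t)=\dnc(g\circ f,g_0\circ f_0)(x,t),\]
which is just associativity of composition.

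\emph{On the zero fiber.} For $V\in N(X,Y,j)$,
\[\dnc(g,g_0)(\dnc(f,f_0)(V,0))=(N(g,g_0)(N(f,f_0)(V)),0).\]
By the functoriality lemma for the normal bundle construction (chain rule), $N(g,g_0)\circ N(f,f_0)=N(g\circ f,g_0\circ f_0)$. This gives the claim on the zero fiber.

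\emph{Identity.} Here $N(Id_X,Id_Y)=Id$ by the same lemma, so $\dnc(Id_X,Id_Y)$ is the identity on both pieces.

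I expect essentially no obstacle. The only point requiring care is that smoothness is not re-proved: two smooth maps that agree pointwise are equal, and smoothness of each $\dnc(\cdot)$ is already supplied by the earlier proposition. If one wanted an independent check near $t=0$, one would compute in $X$-slice charts via $\psi$. There $\dnc(f,f_0)$ reads $(x,\xi,t)\mapsto\bigl(f_1(x,t\xi),\tfrac1t f_2(x,t\xi),t\bigr)$, with normal component $f_2$ vanishing on the slice. Composition of two such expressions is of the same form, with the $t\to0$ limit given by the normal derivatives. This is consistent with the chain rule, but it is not needed for the lemma.
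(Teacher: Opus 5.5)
Your proposal is correct and follows the same route as the paper, which simply calls this a straightforward computation. You check pointwise on $X\times\mathbb{R}^*$, where it is just composition of maps, and on the zero fiber, where it reduces to $N(g\circ f,g_0\circ f_0)=N(g,g_0)\circ N(f,f_0)$ and $N(Id_X,Id_Y)=Id$; since smoothness of each $\dnc(f,f_0)$ is already given by the cited proposition, set-theoretic equality is all that is needed.
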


Using, as for the normal bundle construction, the functoriality properties above, we get the following corollary.

\begin{corollary}
Let $(G_2,G_1,j)$ be a Lie groupoid immersion. Then there is a Lie groupoid structure on 
\begin{equation}
\dnc(G_2,G_1,j)\rightrightarrows \dnc(G_2^{(0)},G_1^{(0)},j_0)
\end{equation}
induced by the normal bundle construction. That is, with Lie groupoid structural maps given by 
$s=\dnc(s_2,s_1), t=\dnc(t_2,t_1), m=\dnc(m_2,m_1), u=\dnc(u_2,u_1), \iota=\dnc(\iota_2,\iota_1)$.
This Lie groupoid is called the deformation (to the normal cone) groupoid associated to $(G_2,G_1,j)$.
\end{corollary}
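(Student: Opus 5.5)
The plan is to deduce the groupoid structure on $\dnc(G_2,G_1,j)$ entirely from the functoriality lemma for the DNC construction, in the same way the normal groupoid was obtained from the functoriality of $N$. First I check that each structural map of $G_2$, restricted along $j$ to the corresponding map of $G_1$, is a morphism of triples. For $s$ and $t$ this is the square $(G_2,G_1,j)\to(G_2^{(0)},G_1^{(0)},j_0)$ commuting because $j$ is a groupoid morphism; the same holds for $u$ and for $\iota$. For $m$ one uses the triple $(G_2^{(2)},G_1^{(2)},j^{(2)})$, where $j^{(2)}=(j\times j)|$ is again an immersion. Proposition 3.4 of \cite{Ca2} then makes $\dnc(s_2,s_1)$, $\dnc(t_2,t_1)$, $\dnc(u_2,u_1)$ and $\dnc(\iota_2,\iota_1)$ smooth maps between the relevant deformation spaces.

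\textbf{Multiplication.} The domain of $m$ is $\dnc(G_2^{(2)},G_1^{(2)},j^{(2)})$. This must be identified with the fibered product
$$\dnc(G_2,G_1,j)\times_{\dnc(s),\dnc(t)}\dnc(G_2,G_1,j).$$
As sets the identification is clear. On the $t\neq0$ part both sides are $G_2^{(2)}\times\mathbb{R}^*$. At $t=0$ one uses $N(G_2^{(2)},G_1^{(2)})=N(j)\times_{N(j_0)}N(j)$, which holds because $s_2$ is a submersion, so $TG_2^{(2)}$ is the fibered product of tangent bundles. For the smooth structure, I also need $\dnc(s_2,s_1)$ to be a submersion, so that the fibered product is a manifold. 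I check this in $G_1$-slice charts. Since $s_2$ is a submersion and $s_1$ is a submersion, one can choose adapted coordinates in which $s_2$ is a linear projection compatible with the slices. In such charts $\dnc(s)$ is $(x,\xi,t)\mapsto(x',\xi',t)$ with the rescaling $\xi\mapsto t\xi$ preserved, hence a submersion. The fibered-product chart then coincides with the DNC chart of $G_2^{(2)}$ adapted to $G_1^{(2)}$. I expect this local compatibility of the smooth structures to be the main obstacle; the rest is formal.

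\textbf{Groupoid axioms.} Each identity $\dnc(f)\circ\dnc(g)=\dnc(f\circ g)$ is given by the functoriality lemma. Associativity is $m\circ(m\times1)=m\circ(1\times m)$ on $G^{(3)}$, applied to the triples of composable triples, using $\dnc(m\times 1)=\dnc(m)\times\dnc(1)$. That last equality holds by the fibered-product identification above, applied once more. The unit and inverse identities follow the same way, with $\dnc(Id)=Id$. Alternatively, all axioms hold pointwise because they hold separately on $N(j)$ (by the corollary for the normal groupoid) and on $G_2\times\mathbb{R}^*$. The smoothness of all structural maps was already established above.
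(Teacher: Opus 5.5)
Your proposal is correct and follows the paper's route. The paper just invokes the functoriality lemma for $\dnc$, together with the smoothness of $\dnc(f,f_0)$ from \cite{Ca2}, exactly as it did for the normal groupoid. You additionally supply two details the paper leaves implicit, and both checks are sound: that $\dnc(s_2,s_1)$ is a submersion, via slice coordinates adapted to $s_2$, and that $\dnc(G_2^{(2)},G_1^{(2)},j^{(2)})$ is diffeomorphic to the fibered product $\dnc(G_2,G_1,j)\times_{\dnc(G_2^{(0)},G_1^{(0)},j_0)}\dnc(G_2,G_1,j)$, so that $\dnc(m_2,m_1)$ really is a multiplication.
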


Now, the symmetry property \ref{symmetryNBthm} satisfied by the normal bundle construction has a very important consequence for us. Indeed, suppose there is a commutative diagram of Lie groupoid immersions
\begin{equation}\label{diag4grpds}
\xymatrix{
G_1\ar[r]^-{i_1}\ar[d]_-{j_1}&G_2\ar[d]^-{j_2}\\
H_1\ar[r]_-{i_2}&H_2,
}
\end{equation}
such that the normal bundle maps $N(j_2,j_1)$ and $N(i_2,i_1)$ are immersions as well. Then there is an associated deformation Lie groupoid morphism
\begin{equation}
\dnc(G_2,G_1,i_1)\xrightarrow{\dnc(j_2,j_1)}\dnc(H_2,H_1,i_2)
\end{equation}
which is an immersion. Thus, the associated normal groupoid 
\begin{equation}
\mathfrak{N}:=N(\dnc(j_2,j_1))\rightrightarrows N(\dnc(j_2^{(0)},j_1^{(0)})).
\end{equation}
exists. On the other hand, let us consider the deformation Lie groupoid
\begin{equation}
\dnc(N(i_2,i_1))\rightrightarrows \dnc(N(i_2^{(0)},i_1^{(0)}))
\end{equation}
associated to the normal Lie groupoid $N(i_2,i_1)\rightrightarrows N(i_2^{(0)},i_1^{(0)})$.
The following result is a straightforward corollary of theorem \ref{symmetryNBthm}.

\begin{corollary}\label{symmetryDNC}
Under the notations above, there is a canonical isomorphism of Lie groupoids
\begin{equation}
\xymatrix@R=1.5pc{
N(\dnc(j_2,j_1))\ar@<.5ex>[d]\ar@<-.5ex>[d] \ar[r]^-\cong & \dnc(N(i_2,i_1)) \ar@<.5ex>[d]\ar@<-.5ex>[d] \\
N(\dnc(j_2^{(0)},j_1^{(0)}))\ar[r]_-\cong & \dnc(N(i_2^{(0)},i_1^{(0)})).
}
\end{equation}
\end{corollary}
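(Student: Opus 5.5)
We have to prove Corollary \ref{symmetryDNC}, i.e. construct a canonical isomorphism $N(\dnc(j_2,j_1))\cong \dnc(N(i_2,i_1))$ of Lie groupoids over the corresponding isomorphism of unit spaces. The plan is to do this first for manifold triples, as a map of sets respecting the $\mathbb{R}$-grading, and then check smoothness, compatibility with the structure maps, and the groupoid statement.

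\textbf{Step 1: the set-theoretic bijection.} The immersion $\dnc(j_2,j_1):\dnc(G_2,G_1,i_1)\to\dnc(H_2,H_1,i_2)$ is compatible with the projections to $\mathbb{R}$. Over $t\neq0$ it is $j_2\times\mathrm{id}:G_2\times\mathbb{R}^*\to H_2\times\mathbb{R}^*$. Over $t=0$ it is $N(j_2,j_1):N(i_1)\to N(i_2)$.

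The normal bundle is computed fibrewise in $t$, because the $\mathbb{R}$-direction is tangent to both source and target and so cancels in the quotient. Hence
\[
N(\dnc(j_2,j_1))=N(N(j_2,j_1))\sqcup N(j_2)\times\mathbb{R}^* .
\]
On the other side, by definition
\[
\dnc(N(i_2,i_1))=N(N(i_2,i_1))\sqcup N(i_2,i_1)\times\mathbb{R}^* .
\]
Here $N(i_2,i_1):N(j_1)\to N(j_2)$ is the normal differential. We now match the two pieces.

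At $t=0$ we use the canonical isomorphism $N(N(j_2,j_1))\cong N(N(i_2,i_1))$ of Theorem~\ref{symmetryNBthm}.

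For $t\neq0$ we must match $N(j_2)\times\mathbb{R}^*$ with $N(i_2,i_1)\times\mathbb{R}^*$, that is, the total space of $N(j_1)\to N(j_2)$, our reading of the notation being the total space over $N(j_2)$ of the normal bundle of the immersion $N(i_2,i_1)$. This is precisely the degenerate case of Theorem~\ref{symmetryNBthm} in which $i_1,i_2$ are replaced by identities. Equivalently, we identify the tangent data directly: $N(j_2)=j_2^*TH_2/TG_2$ and the normal bundle of $N(j_1)\to N(j_2)$ agree after restriction.

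\textbf{Step 2: smoothness, the main obstacle.} We must show the resulting bijection and its inverse are smooth. We work in local slice charts. Locally
\[
H_2=\mathbb{R}^a\times\mathbb{R}^b\times\mathbb{R}^c\times\mathbb{R}^d,
\]
with $G_2$, $H_1$ and $G_1$ the coordinate subspaces corresponding to the two transversal directions. For non-injective immersions we restrict to small opens where all the maps are embeddings; this is legitimate because both constructions are local in the source.

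In such charts both sides become $\mathbb{R}^{a+b+c+d}\times\mathbb{R}$, with the chart maps given by rescalings in the style of $\psi$. On the $\dnc$ side the $\mathbb{R}^c$ coordinates, normal to $H_1$ in $H_2$, are divided by $t$ for $t\neq0$. The normal-bundle-of-$\dnc$ construction leaves these coordinates alone. The explicit formula should then show that the bijection is the identity, or a linear coordinate permutation, in these coordinates. The work in this step is bookkeeping: we must verify that the canonical map of Theorem~\ref{symmetryNBthm} is given by this coordinate swap in charts, and that the normal derivatives $d_N\phi$ glue coherently across charts. Independence of the charts then follows from functoriality of $N$ and of $\dnc$, by the two functoriality lemmas above.

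\textbf{Step 3: groupoid structure.} The bijection is natural in morphisms of commutative squares. This naturality follows from functoriality of $N$ and $\dnc$ together with the naturality of the isomorphism in Theorem~\ref{symmetryNBthm}.

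The structure maps $s,t,m,u,\iota$ on both sides are obtained by applying the functors $N$ and $\dnc$ to the structure maps of the squares of groupoids. The isomorphism therefore intertwines them; for $m$ one applies naturality to the square of composable-arrow spaces, which uses that fibre products commute with $N$ and $\dnc$. This produces the Lie groupoid isomorphism covering the unit-space isomorphism.
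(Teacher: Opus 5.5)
Your overall route is what the paper's one-line ``straightforward corollary of Theorem~\ref{symmetryNBthm}'' amounts to: split both sides over $t$, use Theorem~\ref{symmetryNBthm} on the fibre $t=0$, check smoothness across $t=0$ in slice charts, and get the groupoid structure from naturality. However, Step 1 contains a real mistake at $t\neq 0$. By definition $\dnc(X,Y,j)=N(X,Y,j)\sqcup X\times\mathbb{R}^*$, where $X$ is the \emph{target} of the immersion. Hence
\[
\dnc(N(i_2,i_1))=\dnc(N(j_2),N(j_1),N(i_2,i_1))=N(N(i_2,i_1))\sqcup N(j_2)\times\mathbb{R}^* ,
\]
so the $t\neq 0$ parts of the two sides are literally the same manifold $N(j_2)\times\mathbb{R}^*$. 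The isomorphism there is the identity, and no matching is needed.

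The matching you propose instead is false in general. You want to identify $N(j_2)$ with the normal bundle of $N(i_2,i_1):N(j_1)\to N(j_2)$. But the latter is a vector bundle over $N(j_1)$, hence homotopy equivalent to $G_1$, while $N(j_2)$ is homotopy equivalent to $G_2$. For instance, take $G_1=\mathrm{pt}\to G_2=S^1$ and $H_1=\mathbb{R}\to H_2=S^1\times\mathbb{R}$. These are transverse embeddings, so condition (3) of Proposition~\ref{prop-2immer} holds, yet the two spaces are $\mathbb{R}^2$ and $S^1\times\mathbb{R}$. The ``degenerate case'' of Theorem~\ref{symmetryNBthm} does not help either: with $i_1,i_2$ identities, commutativity forces $j_2=j_1$, and the theorem reduces to the tautology $N(j_1)\cong N(j_1)$.

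Once the $t\neq 0$ map is the identity, the only real content is smoothness across $t=0$, and your chart bookkeeping there also needs correcting. Locally you may take $G_1=\mathbb{R}^a$, $G_2=\mathbb{R}^a\times\mathbb{R}^b$, $H_1=\mathbb{R}^a\times\mathbb{R}^c$ and $H_2=\mathbb{R}^a\times\mathbb{R}^b\times\mathbb{R}^c\times\mathbb{R}^d$. This is possible precisely because the hypothesis is the clean-intersection condition $TG_2\cap TH_1=TG_1$. Compare which coordinate blocks get rescaled on each side:
\begin{itemize}
\item On $N(\dnc(j_2,j_1))$, the $b$-block is rescaled in the base $\dnc(G_2,G_1,i_1)$. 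The $d$-block is also rescaled, because normal vectors are measured in the $\dnc(H_2,H_1,i_2)$ chart.
\item On $\dnc(N(i_2,i_1))$, the rescaled directions are those of $N(j_2)$ transverse to $N(j_1)$, which is again $b\oplus d$.
\item The $c$-block is rescaled on neither side.
\end{itemize}
That is why the glued map is the identity in coordinates and restricts at $t=0$ to the symmetry isomorphism. Your claim that one side divides a block by $t$ while the other ``leaves these coordinates alone'' would put a factor $t^{\pm1}$ into the transition map. Such a map does not extend to a diffeomorphism across $t=0$, so Step 2 as stated cannot be right and must be redone with the matching blocks above.
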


\subsection{Deformation indices for Lie groupoid immersions and functoriality}\label{sectiondefindices}

In this section we will define the deformation index associated to a Lie groupoid immersion and to a given generalized theory (later on our main examples will be the K-theory of the associated $C^*$-algebras but also the Periodic cyclic theory of the associated LF-algebras).

For the moment we do not want to use the term functor but let us suppose that for a given Lie groupoid $G$ we have an associated  $R$-module $F^\bullet(G)$ ($R$ any ring) that satisfies axioms (A1) and (A2) from the introduction. 
Recall that (A1) is the requirement that $F^\bullet$ is a presheaf from the category of Lie groupoid equipped with closed saturated inclusion as morphisms to the category of graded $R$-module.
The axiom (A2) asserts that the restriction to the special fiber ($t=0$) of the deformation groupoid induces an isomorphism of $R$-modules.
%
We can state our first general functoriality result.

\begin{theorem}[Pushforward functoriality I]\label{pushforwardfunctI}
Let $F^\bullet$ an association as above. Suppose we have a commutative diagram of Lie groupoid immersions
\begin{equation}\label{diag4grpds}
\xymatrix{
G_1\ar[r]^-{i_1}\ar[d]_-{j_1}&G_2\ar[d]^-{j_2}\\
H_1\ar[r]_-{i_2}&H_2.
}
\end{equation}
Suppose the normal bundle maps $N(j_2,j_1)$ and $N(i_2,i_1)$ are immersions as well.
Then the following equality of $R$-modules index morphisms holds
\begin{equation}\label{pushforwardfunctIequation}
\Ind_{F^\bullet}^{i_2}\circ \Ind_{F^\bullet}^{N(j_2,j_1)}=\Ind_{F^\bullet}^{j_2}\circ \Ind_{F^\bullet}^{N(i_2,i_1)}
\end{equation}
\end{theorem}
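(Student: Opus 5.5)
The plan is a double deformation argument, using the groupoid $\mathbb{D}^2:=\dnc(\dnc(j_2,j_1))$. This is the deformation to the normal cone of the immersion $\dnc(j_2,j_1):\dnc(G_2,G_1,i_1)\to\dnc(H_2,H_1,i_2)$. It is a Lie groupoid fibred over $\mathbb{R}\times\mathbb{R}$ (after restricting to $[0,1]^2$ if one prefers). The first parameter $s$ comes from the outer deformation, the second parameter $t$ from the inner one. Write $A_{s,t}$ for the fibre of $\mathbb{D}^2$ over $(s,t)$, and similarly $A_{s,\bullet}$ for the fibre over $\{s\}\times\mathbb{R}$ and $A_{\bullet,t}$ for the fibre over $\mathbb{R}\times\{t\}$. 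The first step is to identify these pieces:
\begin{itemize}
\item $A_{1,\bullet}=\dnc(H_2,H_1,i_2)$, and $A_{0,\bullet}=N(\dnc(j_2,j_1))$, which by Corollary~\ref{symmetryDNC} is $\dnc(N(i_2,i_1))$.
\item At the corners: $A_{1,1}=H_2$, $A_{1,0}=N(i_2)$, $A_{0,1}=N(j_2)$, and $A_{0,0}=N(N(j_2,j_1))\cong N(N(i_2,i_1))$ by Theorem~\ref{symmetryNBthm}.
\item $A_{\bullet,1}=\dnc(H_2,G_2,j_2)$. Indeed, for $t\neq 0$ the inner deformation is the immersion $j_2\times\mathrm{id}$, and one checks directly from the charts that the outer deformation is compatible with this.
\item $A_{\bullet,0}=\dnc(N(j_2,j_1))$.
\end{itemize}
All the sub-groupoids above are closed saturated, since they are preimages of closed subsets of the parameter space under the groupoid morphism to the parameter space. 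Hence every restriction map is defined by (A1).

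The second step is to write both sides of the statement as composites of restrictions out of $F^\bullet(\mathbb{D}^2)$, using transitivity (A1). By definition,
\[
\Ind^{i_2}_{F^\bullet}\circ\Ind^{N(j_2,j_1)}_{F^\bullet}=r^{A_{1,\bullet}}_{A_{1,1}}\circ (r^{A_{1,\bullet}}_{A_{1,0}})^{-1}\circ r^{A_{\bullet,0}}_{A_{1,0}}\circ (r^{A_{\bullet,0}}_{A_{0,0}})^{-1}.
\]
The right hand side is handled the same way, using $A_{0,\bullet}$ and $A_{\bullet,1}$. Both sides then go from $F^\bullet(A_{0,0})$ to $F^\bullet(A_{1,1})$. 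The goal is to show that each side equals
\[
r^{\mathbb{D}^2}_{A_{1,1}}\circ (r^{\mathbb{D}^2}_{A_{0,0}})^{-1}.
\]
For this one needs $r^{\mathbb{D}^2}_{A_{0,0}}$ to be an isomorphism. It factors as $r^{A_{0,\bullet}}_{A_{0,0}}\circ r^{\mathbb{D}^2}_{A_{0,\bullet}}$. The second factor is an isomorphism by (A2) applied to the immersion $\dnc(j_2,j_1)$. The first factor is an isomorphism by (A2) applied to the immersion $N(i_2,i_1)$, transported through the isomorphism of Corollary~\ref{symmetryDNC}.

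The third step is the diagram chase. Consider the square of restrictions
\[
\mathbb{D}^2\to A_{1,\bullet}\to A_{1,0} \qquad\text{and}\qquad \mathbb{D}^2\to A_{\bullet,0}\to A_{1,0}.
\]
Both composites equal $r^{\mathbb{D}^2}_{A_{1,0}}$ by (A1). The analogous square through $A_{0,0}$ also commutes. Then $r^{A_{\bullet,0}}_{A_{0,0}}$ and $r^{A_{1,\bullet}}_{A_{1,0}}$ are isomorphisms by (A2), and the displayed composite collapses to $r^{\mathbb{D}^2}_{A_{1,1}}\circ(r^{\mathbb{D}^2}_{A_{0,0}})^{-1}$. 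The other side collapses to the same map by the symmetric argument, which uses the isomorphisms $r^{A_{0,\bullet}}_{A_{0,0}}$ and $r^{A_{\bullet,1}}_{A_{0,1}}$. The latter is (A2) for $j_2$.

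The main obstacle is the first step, namely the identification of the fibres of $\mathbb{D}^2$ as \emph{Lie groupoids}, with compatible identifications on the corners. The fibre $A_{0,\bullet}$ is exactly Corollary~\ref{symmetryDNC}. The other fibres, especially $A_{\bullet,0}\cong\dnc(N(j_2,j_1))$, need a version of that corollary with the roles of $i$ and $j$ exchanged, applied on the $t=0$ slice. I would first try to derive it by applying Corollary~\ref{symmetryDNC} to the transposed square and checking, in $X$-slice charts, that the resulting identifications agree on the corner $A_{0,0}$. This agreement is exactly the canonical isomorphism of Theorem~\ref{symmetryNBthm}. Functoriality of $N$ and $\dnc$ then ensures that the structure maps match.
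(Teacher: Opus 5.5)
Your proposal is correct and is essentially the paper's own proof. The paper uses the same iterated deformation groupoid $\mathfrak{D}=\dnc(\dnc(H_2,H_1,i_2),\dnc(G_2,G_1,i_1),\dnc(j_2,j_1))$ and the same $3\times 3$ diagram of $s=0,1$ and $t=0,1$ restrictions: the small squares commute by (A1), (A2) gives the invertible arrows, and Corollary~\ref{symmetryDNC} identifies the left column with $\Ind_{F^\bullet}^{N(i_2,i_1)}$, so your chase through $r^{\mathfrak{D}}_{A_{1,1}}\circ(r^{\mathfrak{D}}_{A_{0,0}})^{-1}$ is that diagram written out. The identifications $A_{\bullet,0}\cong\dnc(N(j_2,j_1))$ and $A_{\bullet,1}\cong\dnc(H_2,G_2,j_2)$, which the paper takes ``by definition of $\mathfrak{D}$'', follow most directly from an $X$-slice chart check that the deformation to the normal cone of a $t$-parametrized family of immersions restricts, over a fixed $t$, to the deformation of the slice immersion; you do not need the transposed version of Corollary~\ref{symmetryDNC} there.
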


\begin{proof}
By the functoriality properties of the DNC construction, and because on the hypothesis on 
$N(j_2,j_1)$ and $N(i_2,i_1)$ being immersions, we have a Lie groupoid immersion
\begin{equation}
\dnc(G_2,G_1,i_1)\stackrel{\dnc(j_2,j_1)}{\longrightarrow}\dnc(H_2,H_1,i_2).
\end{equation}
Consider the associated deformation Lie groupoid
\begin{equation}
\mathfrak{D}:=\dnc(\dnc(H_2,H_1,i_2),\dnc(G_2,G_1,i_1),\dnc(j_2,j_1)).
\end{equation}
The following diagram is commutative by the assumptions on $F^\bullet$ above, by definition of $\mathfrak{D}$ and by definition of the indices:
\begin{equation}
\tiny{
\xymatrix{
&&&&\\
&&&&\\
F^\bullet(N(N(H_2,H_1),N(G_2,G_1)))\ar@/_3pc/[dddd]_-{\Ind_{F^\bullet}^{N(i_2,i_1))}}\ar@/^3pc/[rrrr]^-{\Ind_{F^\bullet}^{N(j_2,j_1))}}&&F^\bullet(\dnc(N(H_2,H_1),N(G_2,G_1)))\ar[ll]^-{s=0}_-{\cong}\ar[rr]^-{s=1}&&F^\bullet(N(H_2,H_1))\ar@/^3pc/[dddd]^-{\Ind_{F^\bullet}^{i_2}}\\
&&&&\\
F^\bullet(\mathfrak{N})\ar[uu]^-{t=0}_-{\cong}\ar[dd]^-{t=1}&&F^\bullet(\mathfrak{D})\ar[uu]^-{t=0}_-{\cong}\ar[dd]^-{t=1}\ar[ll]^-{s=0}_-{\cong}\ar[rr]^-{s=1}&&F^\bullet(\dnc(H_2,H_1))\ar[uu]^-{t=0}_-{\cong}\ar[dd]^-{t=1}\\
&&&&\\
F^\bullet(N(H_2,G_2))\ar@/_3pc/[rrrr]_-{\Ind_{F^\bullet}^{j_2}}&&F^\bullet(\dnc(H_2,G_2))\ar[ll]^-{s=0}_-{\cong}\ar[rr]^-{s=1}&&F^\bullet(H_2)\\
&&&&\\
}}
\end{equation}
where the left vertical diagram gives the right index because of corollary \ref{symmetryDNC}.

The equation \ref{pushforwardfunctIequation} follows from the commutativity of the diagram above..
\end{proof}

\begin{remark}
The hypothesis that $N(j_2,j_1)$ and $N(i_2,i_1)$ are immersions in the last theorem is not at all restrictive. In fact, in the main case we want to consider below (essentially $G_1=H_1$ and $j_1=Id$) it is satisfied. 
\end{remark}

Now, in order to refine our functoriality (based on the examples we want to apply it, as for K-theory and Periodic cyclic theory) we need to ask further properties to our \theory-theory $F^\bullet$ (these properties are quite classic for the examples we have in mind). 

\begin{definition}[vb-groupoid]
Consider the following Lie groupoid morphism
\begin{equation}
\xymatrix{
E\ar[r]\ar@<.5ex>[d]\ar@<-.5ex>[d]&G\ar@<.5ex>[d]\ar@<-.5ex>[d]\\
E_0\ar[r]&G_0.
}
\end{equation}
Then, $E \rightrightarrows E_0$ is said to be a vb-groupoid over $G\rightrightarrows G_0$  if $E \rightarrow G$ and $E_0 \rightarrow G_0$ are vector bundles, and all structural maps are vector bundles morphisms. 
We will refer to $E\rightrightarrows E_0$ as the total groupoid and   $G\rightrightarrows G_0$ as the base groupoid.
\end{definition}

The normal bundle groupoid of an immersion $(G_2,G_1,j)$ is an example of a vb-groupoid.

\begin{definition}[$F^\bullet$-oriented vb-groupoids and $F^\bullet$-Thom isomorphism]\label{Thomproperties}
Let $E\rightrightarrows E_0$ be a vb-groupoid over $G\rightrightarrows G_0$. A $F^\bullet$-orientation for $E\rightrightarrows E_0$ is the choice of a $R$-module isomorphism
\begin{equation}
\xymatrix{
F^\bullet(G)\ar[rr]^-{T_{G,E}}_-\cong && F^\bullet(E)
}
\end{equation}
which is compatible with the restriction in the following sense: for every $K\subseteq G$ closed saturated Lie subgroupoid of $G$, there is an isomorphism 
\begin{equation}
\xymatrix{
F^\bullet(K)\ar[rr]^-{T_{K,E|_K}}_-\cong && F^\bullet(E|_K)
}
\end{equation}
making commutative the square
\begin{equation}\label{Thomnatural}
\xymatrix{
F^\bullet(G)\ar[rr]^-{T_{G,E}}_-\cong \ar[d]_-{r_{K}^{G}}&& F^\bullet(E)\ar[d]^-{r_{E|_K}^{E}}\\
F^\bullet(K)\ar[rr]_-{T_{K,E|_K}}^-\cong && F^\bullet(E|_K)
}
\end{equation}
 These isomorphisms will be called Thom isomorphisms in the sequel (or $F^\bullet$-Thom isomorphisms when such precision is necessary).

We say that $F^\bullet$ satisfies the Thom isomorphism property if given a commutative square of vb-groupoid morphisms 
\begin{equation}
\xymatrix{
E_1\ar[d]&V\ar[l]\ar[d]\\
G&E_2\ar[l]
}
\end{equation}
with $F^\bullet$-orientations $T_{E_1,V}, T_{G,E_1}, T_{E_2,V}$ and $ T_{G,E_2}$ we have that
\begin{equation}\label{ThomThom}
T_{E_1,V}\circ T_{G,E_1}=T_{E_2,V}\circ T_{G,E_2}.
\end{equation}
\end{definition}

\begin{definition}[$F$-oriented Lie groupoid immersion]
Let $(G_2,G_1,j)$ be a Lie groupoid immersion and $F$ an association as above. We say that $j$ is $F^\bullet$-oriented 
if there is an $F^\bullet$ orientation $ T_{G_1,N(G_2,G_1,j)}$ for the normal vb-groupoid $N(G_2,G_1,j)$ over $G_1$. In this case we use the shorthanded notation
$$T_j:=T_{G_1,N(G_2,G_1,j)}$$
for the associated Thom isomorphism.

If $j$ is $F^\bullet$-oriented we define the associated F-pushforward map as the morphism
\begin{equation}
j_!:F^\bullet(G_1)\to F^\bullet(G_2)
\end{equation}
given by the composition 
$$F^\bullet(G_1)\stackrel{T_j}{\longrightarrow}F^\bullet(N(G_2,G_1,j))\xrightarrow{\Ind_{F^\bullet(j)}}F^\bullet(G_2).$$
\end{definition}

\begin{theorem}[Pushforward functoriality II]\label{pushforwardfunctII}
Let $F^\bullet$ be a \theory-theory with the Thom isomorphism property. Suppose we have a commutative diagram of Lie groupoid immersions
\begin{equation}
\xymatrix{
G_1\ar[r]^-{i_1}\ar[d]_-{j_1}&G_2\ar[d]^-{j_2}\\
H_1\ar[r]_-{i_2}&H_2
}
\end{equation}
for which $N(j_2,j_1)$ and $N(i_2,i_1)$ are Lie groupoid immersions. Suppose further that the Lie groupoid immersions $\dnc(j_2,j_1)$ and $\dnc(i_2,i_1)$ are $F$-oriented.
Then the following equality of $R$-modules index morphisms holds
\begin{equation}\label{pushforwardfunctIIequation}
 j_{2!}\circ i_{1!}= i_{2!}\circ j_{1!}
\end{equation}
\end{theorem}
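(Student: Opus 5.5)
We unfold both sides using the definition of pushforwards, $j_{2!}\circ i_{1!}=\Ind^{j_2}_{F^\bullet}\circ T_{j_2}\circ\Ind^{i_1}_{F^\bullet}\circ T_{i_1}$ and $i_{2!}\circ j_{1!}=\Ind^{i_2}_{F^\bullet}\circ T_{i_2}\circ\Ind^{j_1}_{F^\bullet}\circ T_{j_1}$. The aim is to rewrite each as a composite of a double Thom isomorphism $F^\bullet(G_1)\to F^\bullet(N(N(j_2,j_1)))\cong F^\bullet(N(N(i_2,i_1)))$, followed by the two-step index of Theorem \ref{pushforwardfunctI}. Theorem \ref{pushforwardfunctI} then identifies the index parts, and the Thom isomorphism property (\ref{ThomThom}) identifies the Thom parts.

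\textbf{Step 1: commute a Thom isomorphism past a deformation index.} We need $T_{j_2}\circ \Ind^{i_1}_{F^\bullet}=\Ind^{N(j_2,j_1)}_{F^\bullet}\circ T_{N(i_1)\to N(N(i_2,i_1))}$, and symmetrically with $i$ and $j$ exchanged. The $F^\bullet$-orientation of the immersion $\dnc(j_2,j_1)$ gives a Thom isomorphism
\[
T:F^\bullet(\dnc(G_2,G_1,i_1))\to F^\bullet(N(\dnc(j_2,j_1))),
\]
and $N(\dnc(j_2,j_1))\cong\dnc(N(j_2,j_1))$ by Corollary \ref{symmetryDNC}, with the roles of $i$ and $j$ exchanged. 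The fibres $t=0$ and $t=1$ of $\dnc(G_2,G_1,i_1)$ are closed saturated subgroupoids. Over them the normal vb-groupoid restricts to $N(N(i_2,i_1),N(i_1))$, which is a normal bundle of normal bundles, and to $N(j_2)$, respectively. The compatibility square (\ref{Thomnatural}) then gives $r_0\circ T=T_0\circ r_0$ and $r_1\circ T=T_{j_2}\circ r_1$. Since the $r_0$ are isomorphisms by (A2), inverting them yields the desired intertwining identity. We would also check that the restricted orientation at $t=1$ is indeed $T_{j_2}$; we take this as the intended meaning of the hypothesis that $\dnc(j_2,j_1)$ is oriented.

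\textbf{Step 2: assemble the two sides.} After Step 1, the left side is $\Ind^{j_2}\circ\Ind^{N(i_2,i_1)}\circ T'\circ T_{i_1}$. Here $T'$ is the Thom isomorphism of the vb-groupoid $N(N(i_2,i_1))$ over $N(i_1)$, which sits in a square of vb-groupoids over $G_1$. Symmetrically, the right side is $\Ind^{i_2}\circ\Ind^{N(j_2,j_1)}\circ T''\circ T_{j_1}$. By Theorem \ref{pushforwardfunctI} the two index composites agree once their domains are identified via the isomorphism of Theorem \ref{symmetryNBthm}. It therefore remains to show $T'\circ T_{i_1}=T''\circ T_{j_1}$ under that identification. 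This is exactly (\ref{ThomThom}) applied to the square
\[
N(i_1)\leftarrow V\to N(j_1),
\]
with both arrows over $G_1$, where $V=N(N(i_2,i_1))\cong N(N(j_2,j_1))$.

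\textbf{Main obstacle.} The delicate point is bookkeeping rather than new ideas. One must verify that the canonical isomorphisms of Theorem \ref{symmetryNBthm} and Corollary \ref{symmetryDNC} are compatible with the restrictions to $t=0$ and $s=0$, so that the same identification appears in the index comparison (Theorem \ref{pushforwardfunctI}) and in the Thom comparison. One must also verify that $V$ really forms a commutative square of vb-groupoids over $G_1$ in the sense of Definition \ref{Thomproperties}. I would first check these compatibilities on the double deformation groupoid $\mathfrak{D}$ used in the proof of Theorem \ref{pushforwardfunctI}, restricting the Thom isomorphisms along both parameters $s$ and $t$ simultaneously.
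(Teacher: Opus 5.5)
Your proposal is correct and follows essentially the paper's proof. Step 1 is the commutativity of squares II--III, and with $i$ and $j$ swapped IV--V; this comes from the naturality (\ref{Thomnatural}) of the Thom isomorphisms of $\dnc(j_2,j_1)$ and $\dnc(i_2,i_1)$ under the restrictions $t=0,1$, together with Corollary \ref{symmetryDNC}. Step 2 is the paper's use of Theorem \ref{pushforwardfunctI} for squares A and of (\ref{ThomThom}) for square I.

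Fix the labels in Step 1: the index there should be $\Ind^{N(i_2,i_1)}_{F^\bullet}$, and the symmetry isomorphism should read $N(\dnc(j_2,j_1))\cong\dnc(N(i_2,i_1))$. This is what you in fact use in Step 2.
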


\begin{proof}
By the functorial properties of the deformation to the normal cone, and since 
$N(j_2,j_1)$ and $N(i_2,i_1)$ are assumed to be immersions, the induced map at the level of deformation groupoid 
\begin{equation}
\dnc(G_2,G_1,i_1)\xrightarrow{\dnc(j_2,j_1)}\dnc(H_2,H_1,i_2).
\end{equation}
is a groupoid immersion.
Thus, the iterated deformation groupoid
\begin{equation}
\mathfrak{D}:=\dnc(\dnc(H_2,H_1,i_2),\dnc(G_2,G_1,i_1),\dnc(j_2,j_1)).
\end{equation}
makes sense as a Lie groupoid.
We want to show that the following diagram is commutative

\begin{equation}
\tiny{
\xymatrix@C=1.5pc@R=2.5pc{
&&&&&&\\
F^\bullet(G_1)\ar@{}[rd]|-{{\bf I}}\ar@/_3pc/[ddddd]_-{i_{1!}}\ar@/^3pc/[rrrrr]^-{j_{1!}}\ar[d]_-T^-\cong\ar[r]^-T_-\cong&F^\bullet(N(H_1,G_1))\ar@{}[rrd]|-{{\bf IV}}\ar[d]_-T^-\cong&&F^\bullet(\dnc(H_1,G_1))\ar@{}[rrd]|-{{\bf V}}\ar[d]_-T^-\cong\ar[ll]^-{s=0}_-{\cong}
\ar[rr]^-{s=1}&&F^\bullet(H_1)\ar[d]_-T^-\cong\ar@/^3pc/[ddddd]^-{i_{2!}}\\
F^\bullet(N(G_2,G_1))\ar@{}[rdd]|-{{\bf II}}\ar[r]^-T_-\cong&F^\bullet(N(N(H_2,H_1),N(G_2,G_1)))\ar@{}[rrdd]|-{{\bf A}}&&F^\bullet(\dnc(N(H_2,H_1),N(G_2,G_1)))\ar@{}[rrdd]|-{{\bf A}}\ar[ll]^-{s=0}_-{\cong}
\ar[rr]^-{s=1}&&F^\bullet(N(H_2,H_1)&\\
&&&&&&\\
F^\bullet(\dnc(G_2,G_1))\ar@{}[rdd]|-{{\bf III}}\ar[r]^-T_-\cong\ar[uu]^-{t=0}_-{\cong}\ar[dd]^-{t=1}&F^\bullet(\mathfrak{N})\ar@{}[rrdd]|-{{\bf A}}\ar[uu]^-{t=0}_-{\cong}\ar[dd]^-{t=1}&&F^\bullet(\mathfrak{D})\ar@{}[rrdd]|-{{\bf A}}\ar[uu]^-{t=0}_-{\cong}\ar[dd]^-{t=1}\ar[ll]^-{s=0}_-{\cong}\ar[rr]^-{s=1}&&F^\bullet(\dnc(H_2,H_1))\ar[uu]^-{t=0}_-{\cong}\ar[dd]^-{t=1}&\\
&&&&&&\\
F^\bullet(G_2)\ar@/_3pc/[rrrrr]_-{j_{2!}}\ar[r]_-T^-\cong&F^\bullet(N(H_2,G_2))&&F^\bullet(\dnc(H_2,G_2))\ar[ll]^-{s=0}_-{\cong}\ar[rr]^-{s=1}&&F^\bullet(H_2)&\\
&&&&&&\\
}}
\end{equation}
As visually sketch in the diagram above, we have separated the diagram in smaller squares. For keep notation shorts we denoted by $T$ all the involved Thom isomorphisms (that we will detail further below) and by $s=0,1$, $t=0,1$ the canonical restrictions as for theorem \ref{pushforwardfunctI}.

Now, the diagrams marked with an $\bf A$ correspond exactly to theorem \ref{pushforwardfunctI} and hence they are commutative.

{\bf Diagrams II and III:} By hypothesis, the map $\dnc(j_2,j_1)$ is an $F$-oriented immersion and hence there is Thom isomorphism
$$T_{\dnc(j_2,j_1)}:F^\bullet(\dnc(G_2,G_1))\stackrel{\cong}{\longrightarrow} F^\bullet(\mathfrak{N}).$$
By naturality of the Thom isomorphism, property \ref{Thomnatural} above, diagrams {\bf II} and {\bf III} are commutative.

{\bf Diagrams IV and V:} The commutativity of diagrams {\bf IV} and {\bf V} follow by the exact same arguments as for the diagrams {\bf II} and {\bf III} but applying it to the immersion $\dnc(i_2,i_1)$ together with the fact that 
\begin{equation}
\dnc(N(H_2,H_1),N(G_2,G_1))=\dnc(N(j_2,j_1))
\end{equation}
and 
\begin{equation}
N(\dnc(i_2,i_1))
\end{equation}
are canonically isomorphic groupoids by corollary \ref{symmetryDNC}.

{\bf Diagram I:} This diagram's commutativity states as the equality
\begin{equation}\label{ThomThomnormalequation}
T_{N(j_2,j_1)}\circ T_{i_1}=T_{N(i_2,i_1)}\circ T_{j_1}
\end{equation}
up to the isomorphism of theorem \ref{symmetryNBthm}. This equality holds by Thom property \ref{ThomThom} above.

This concludes the theorem.
\end{proof}

Now, we have been assuming the condition that $N(j_2,j_1)$ and $N(i_2,i_1)$ are both immersions. As we will see now this condition is not restrictive for the cases we will work out below. Let us start by introducing the notion of immersion of vector bundle: a vector bundle map $E_1 \rightarrow E_2$ is said to be a vb-immersion if it is fiberwise injective and it covers an immersion of smooth manifold.
\begin{proposition}\label{prop-2immer}
Consider a commutative square consisting only of immersions of smooth manifolds:
\begin{equation}\label{2-immersion}
\xymatrix{
M_1\ar[r]^{i_1}\ar[d]_-{j_1}&M_2\ar[d]^-{j_2}\\
N_1\ar[r]_{i_2}&N_2
}
\end{equation}
     
    The following assertions are equivalent:
    \begin{enumerate}
        \item[$(1)$] The normal differential $N(j_2,j_1): N(i_1) \rightarrow N(i_2)$ is a vb-immersion.
        \item[$(2)$] The normal differential $N(i_2,i_1): N(j_1) \rightarrow N(j_2)$ is a vb-immersion.
        \item[$(3)$] For each $m \in M_1$, the following sequence of vector spaces is exact:
        $$
        0 \rightarrow T_mM_1 \xrightarrow{(j_{1*},i_{1*})} T_{j_1(m)}N_1 \times T_{i_1(m)}M_2 \xrightarrow{i_{2*} - j_{2*}} T_nN_2
        $$
        with $n= j_2 \circ i_1(m) = i_2 \circ j_1(m)$.
    \end{enumerate}
\end{proposition}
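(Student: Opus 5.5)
The whole statement is pointwise linear algebra, so I will work over a fixed point $m\in M_1$. Set $n=j_2i_1(m)=i_2j_1(m)$ and
$$A=T_mM_1,\quad B=T_{j_1(m)}N_1,\quad C=T_{i_1(m)}M_2,\quad D=T_nN_2.$$
The differentials give injective linear maps $j_{1*}:A\to B$, $i_{1*}:A\to C$, $i_{2*}:B\to D$ and $j_{2*}:C\to D$, with $i_{2*}j_{1*}=j_{2*}i_{1*}$.

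First I settle the immersion part of the definition. The normal differential $N(j_2,j_1):N(i_1)\to N(i_2)$ covers $j_1$, and $N(i_2,i_1):N(j_1)\to N(j_2)$ covers $i_1$; both are immersions by hypothesis. So in (1) and (2) the only content is fiberwise injectivity. On fibers over $m$ the two maps are
$$\bar j_{2*}:C/i_{1*}A\to D/i_{2*}B,\qquad \bar i_{2*}:B/j_{1*}A\to D/j_{2*}C.$$

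Next I compare each of these with (3). Exactness of (3) at $A$ holds automatically, since $j_{1*}$ is injective. So (3) amounts to exactness in the middle:
$$\ker(i_{2*}-j_{2*})=\{(j_{1*}a,i_{1*}a): a\in A\}.$$
This is equivalent to the pullback condition $i_{2*}B\cap j_{2*}C=i_{2*}j_{1*}A$ inside $D$.

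To see this, take $(b,c)$ with $i_{2*}b=j_{2*}c$. The common value lies in $i_{2*}B\cap j_{2*}C$. If that intersection equals $i_{2*}j_{1*}A$, write $i_{2*}b=i_{2*}j_{1*}a$. Injectivity of $i_{2*}$ gives $b=j_{1*}a$. Then $j_{2*}c=j_{2*}i_{1*}a$, and injectivity of $j_{2*}$ gives $c=i_{1*}a$. The converse direction is immediate.

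Now consider $\bar j_{2*}$. Its kernel is $\{c : j_{2*}c\in i_{2*}B\}$ modulo $i_{1*}A$. Applying $j_{2*}$, which is injective, this kernel is zero exactly when $j_{2*}C\cap i_{2*}B=j_{2*}i_{1*}A$, which is the same pullback condition. The argument for $\bar i_{2*}$ is symmetric and gives the same condition again. Hence (1), (2) and (3) are all equivalent at each point $m$, and so globally.

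I expect the main subtlety to be bookkeeping rather than genuine difficulty. One has to identify the fibers correctly: the fiber of $N(i_1)$ over $m$ is $C/i_{1*}A$, while the fiber of $N(i_2)$ over $j_1(m)$ is $D/i_{2*}B$. When $j_1$ is not injective, fiberwise injectivity must be checked over each point separately, but the computation is pointwise, so no further argument is needed.
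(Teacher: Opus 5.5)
Your proof is correct and follows essentially the paper's route: a pointwise diagram chase in the tangent spaces at $m$, using injectivity of the differentials to compute the kernels of the induced maps on normal fibers. The difference is only organizational. You reduce all three conditions to the single intersection condition $i_{2*}B\cap j_{2*}C=i_{2*}j_{1*}A$, which makes the $(1)\Leftrightarrow(2)$ symmetry explicit. The paper instead invokes symmetry and chases elements for $(1)\Leftrightarrow(3)$ directly, leaving implicit the injectivity of $i_{2*}$ in its $(1)\Rightarrow(3)$ step, which your argument spells out.
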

\begin{remark} 
The condition (2) reminds the "good pair condition" from \cite{BCH16} appendix A. In the case of embedding, it corresponds to the clean instersection condition $TN_1 \cap TM_2 = TM_1$.
\end{remark}
\begin{proof}
    By symmetry, we just need to show $(1)\Leftrightarrow (3)$.
    \par
    $(1) \Rightarrow (3)$: 
    Let $\delta \in T_{i_1(m)} M_2$ such that 
    $
    j_{2*}(\delta)$ belongs to $ i_{2*}(T_{j_1(m)} N_1)$, or equivalently $[\delta] \in \ker N(j_2,j_1)$.
    Since $N(j_2,j_1)$ is a vb-immersion, one must have $\delta = i_{1*}(\varepsilon)$ for some unique $\varepsilon$ in $T_m M_1$. 
    Put $\gamma= j_{1*}(\varepsilon)$ in $T_{j_1(m)} N_1$, hence $j_{2*}(\delta) = i_{2*}(\gamma)$.
    In particular, $(\delta,\gamma)$ lies inside $Im(j_{1*}, i_{1*})$.
    \par
    $(3) \Rightarrow (1)$:
    Let $[\delta] \in \ker N(j_2,j_1)$, then by lifting from $N(j_2,j_1)$ to $j_1 \times_{j_2} j_{2*}$ this amounts to $j_{2*}(\delta) \in i_{2*}(TN_1)$. Let $\gamma \in T_{j_1(m)}N_1$ such that $j_{2*}(\delta)=i_{2*}(\gamma)$. Thus, by exactness, $(\delta,\gamma)$ are in $Im(j_{1*}, i_{1*})$, in particular $[\delta]=0$.
\end{proof}
\begin{remark}
    The inclusion $Im(j_{1*}, i_{1*}) \subseteq \ker(i_{2*} - j_{2*})$ always holds (direct consequence of the commutativity of the square~(\ref{2-immersion})), but the reverse inclusion is not automatic. For example, consider the following situation:
  \begin{equation}
  \begin{split}
   \xymatrix{
    \mathbb{R}\ar[r]\ar[d]&\mathbb{R}^3\ar[d]\\
    \mathbb{R}^5\ar[r]&\mathbb{R}^6
    }
  \end{split}
\end{equation}
    where all the maps are the inclusions of the corresponding first coordinates. Then applying the normal functor leads to the following vector bundle map
    $$
    N(\mathbb{R}^5, \mathbb{R}^4) = \mathbb{R}^4 \times \mathbb{R} \rightarrow 
    N(\mathbb{R}^6, \mathbb{R}^3) = \mathbb{R}^3 \times \mathbb{R}^3
    $$
    which cannot be a vb-immersion because of a dimension issue.
\end{remark}
As an application of the previous proposition, we obtain
\begin{corollary}\label{cor-monosquare-specialcase}
   Suppose we have a commutative diagram of Lie groupoid immersions
\begin{equation}
\begin{split}
  \xymatrix{
  G_1\ar[r]^-{i_1}\ar[d]_-{Id_{G_1}}&G_2\ar[d]^-{j_2}\\
  G_1\ar[r]_-{i_2}&H_2
}
\end{split}
\end{equation}
    then the (equivalent) assertions of proposition~\ref{prop-2immer} hold where the respective normal differential are Lie groupoid immersions in condition (1) and (2).
\end{corollary}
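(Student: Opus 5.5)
The plan is to verify condition (3) of Proposition~\ref{prop-2immer} directly. By that proposition, (3) is equivalent to (1) and (2). We apply it twice: once to the square of arrow manifolds $G_1, G_2, G_1, H_2$, and once to the square of unit manifolds $G_1^{(0)}, G_2^{(0)}, G_1^{(0)}, H_2^{(0)}$. All maps in both squares are immersions, so the proposition applies.

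Fix $m\in G_1$ and set $n=j_2(i_1(m))=i_2(m)$. Since $j_1=Id_{G_1}$, the sequence in (3) becomes
\[
0\to T_mG_1\xrightarrow{(Id,\, i_{1*})} T_mG_1\times T_{i_1(m)}G_2\xrightarrow{i_{2*}-j_{2*}} T_nH_2 .
\]
Injectivity of the first map is clear from the identity component. For exactness in the middle, take $(\gamma,\delta)$ with $i_{2*}\gamma=j_{2*}\delta$. Commutativity of the square gives $i_2=j_2\circ i_1$, hence $j_{2*}(i_{1*}\gamma)=j_{2*}\delta$. Because $j_2$ is an immersion, $j_{2*}$ is injective at $i_1(m)$, so $\delta=i_{1*}\gamma$. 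Therefore $(\gamma,\delta)=(Id,i_{1*})(\gamma)$, which is the required element of the image. The same computation works verbatim on units, since $j_2^{(0)}$ is also an immersion.

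It remains to upgrade this to the groupoid level. The normal differentials $N(j_2,Id)\colon N(i_1)\to N(i_2)$ and $N(i_2,i_1)\colon N(Id_{G_1})\to N(j_2)$ are Lie groupoid morphisms by the functoriality of the normal bundle construction, applied to the structure maps as in the corollary defining normal groupoids. By the previous step they are vb-immersions on both arrows and units. A vb-immersion is fiberwise injective and covers an immersion, so it is an immersion of total spaces; hence both maps are Lie groupoid immersions.

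The one point needing care is $N(Id_{G_1})$, which is the zero bundle over $G_1$ (that is, $G_1$ itself). Condition (2) then says that $N(i_2,i_1)\colon G_1\to N(j_2)$, the zero section composed with $i_1$, is an immersion. This is consistent with the above, because $i_1$ is an immersion. I expect no genuine obstacle beyond this bookkeeping: the only substantive input is the injectivity of $j_{2*}$.
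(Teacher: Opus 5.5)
Your proof is correct and takes the same route as the paper. The paper also verifies condition (3) of Proposition~\ref{prop-2immer}, dismissing it as ``a straightforward computation''; your argument, using $i_2=j_2\circ i_1$ and the injectivity of $j_{2*}$ on both arrows and units, is that computation written out. The extra bookkeeping you supply is also correct and is left implicit in the paper: the normal differentials are groupoid morphisms by functoriality, and a fiberwise injective bundle map covering an immersion is an immersion of total spaces.
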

\begin{proof}
A straightforward computation allows to verify condition (3) of proposition~\ref{prop-2immer}.
\end{proof}

\section{Exemples, applications}

\subsection{Equivariant wrong way $F^\bullet$-functoriality for \'etale Lie groupoids}\label{subsectionFetalefunctoriality}

\begin{definition}
Let $H\rightrightarrows H^{(0)}$ be a Lie groupoid. It is say to be \'etale if the source and target maps are both local diffeomorphisms.
\end{definition}

Let $f:G_1\longrightarrow G_2$ be a Lie groupoid morphism where $G_1$ is \'etale. Associated to such a morphism we can consider the following Lie groupoid morphism
\begin{equation}
\tilde{f}:G_1\longrightarrow G_2\times (G_1^{(0)}\times G_1^{(0)})
\end{equation}
where $\tilde{f}:=(f, (s,t))$ ($s$ and $t$, source and target maps of $G_1$), and 
where at the right we have the product groupoid of $G_2$ and the pair groupoid of $G_1^{(0)}$. 

Because $G_1$ is \'etale we have that $\tilde{f}$ is a Lie groupoid immersion (not necessarilly injective). But even more important for applications, it is Morita equivalent to the original morphism $f$ in the sense that the following Lie groupoid morphism diagram commutes
\begin{equation}
\xymatrix{
G_1 \ar[r]^-f\ar[rd]_-{\tilde{f}}& G_2\\
& G_2\times (G_1^{(0)}\times G_1^{(0)})\ar[u]_-{m}^-\sim
}
\end{equation}
where $m$ is the canonical Morita equivalence induced by the canonical Morita equivalence between the pair groupoid of $G_1^{(0)}$ and a point. Hence, any Lie groupoid morphism as above with $G_1$ \'etale can be transformed, up to Morita equivalence, into a Lie groupoid immersion. 

Now, we want to consider a more general situation. Consider $f:G_1\longrightarrow G_2$ as above with the extra data of an external Lie groupoid $G\rightrightarrows G_0$   acting on the groupoids $G_1$ and $G_2$, and such that the morphism $f$ is $G$-equivariant. In this situation we have a commutative diagram of Lie groupoid morphism between the respective associated semi-direct product groupoids
\begin{equation}\label{tildeetale}
\xymatrix{
G_1\rtimes G \ar[r]^-f\ar[rd]_-{\tilde{f}}& G_2\rtimes G\\
& (G_2\times (G_1^{(0)}\times G_1^{(0)}))\rtimes G\ar[u]_-{m}^-\sim
}
\end{equation}
where we still denote by $f$ and $\tilde{f}$ the associated maps to not load more the notation.

\begin{remark}
In the appendix \ref{subsectiongrpdactinggrpd} we review the basic definition of a groupoid acting on another groupoid and the associated semi-direct product. In particular, it is important to remark that the notation $H\rtimes G$ for the semi-direct product groupoid associated to a groupoid action of $G$ on $H$ includes already the respective fiber product domains. Indeed, by definition of a groupoid action $H$ (and $H_0$) comes with a moment map over $G_0$ and $H\rtimes G:=H\times_{G_0}G$. So for example, in the case above $(G_2\times (G_1^{(0)}\times G_1^{(0)}))\rtimes G=(G_2\times (G_1^{(0)}\times G_1^{(0)}))\times_{G_0} G=G_2\times_{G_0} (G_1^{(0)}\times_{G_0} G_1^{(0)})\times_{G_0} G$ which is Morita equivalent to $G_2\times_{G_0} G_0 \times_{G_0} G=G_2 \times_{G_0} G$. 
\end{remark}

Given a \theory-theory $F^\bullet$ as in the previous section, we want to associate a wrong way map $f_!$ to $f$. Now, since we know how to do it (under $F^\bullet$-oriented conditions) for $\tilde{f}$, we would have a good definition if our theory satisfies a good Morita invariance property, this is exactly the content of the following axiom/definition

\begin{definition}[$F^\bullet$-Morita invariance]\label{defMoritainv}
Consider a \theory-theory $F^\bullet$ with the Thom isomorphism property as above. We say that it satisfies Morita invariance if given a Morita equivalence of Lie groupoids
\begin{equation}
\xymatrix{
H\ar[r]^-m_-\sim & H'
}
\end{equation}
there is an associated isomorphism
\begin{equation}
\xymatrix{
F^\bullet(H)\ar[r]^-{F(m)}_-\cong & F^\bullet(H')
}
\end{equation}
that satisfies:
\begin{itemize}
\item (Functoriality) If 
\begin{equation}
\xymatrix{
H\ar[r]^-m_-\sim & H'\ar[r]^-{m'}_-\sim & H""
}
\end{equation}
is a composition of Lie groupoid Morita equivalences, then 
\begin{equation}
F(m'\circ m)=F(m')\circ F(m).	
\end{equation}
\item (Naturality)
If  
\begin{equation}
\xymatrix@C=3pc{
H\ar[r]^-{m_H}_-\sim & H'\\
K\ar[u]\ar[r]^-{m_K}_-\sim & K'\ar[u]
}
\end{equation}
is a commutative diagram of Lie groupoid morphisms with $m_H,m_K$ Morita equivalences and vertical maps given by inclusions of closed subgroupoids, then the following diagram of $R$-module morphisms
\begin{equation}
\xymatrix@C=3pc{
F^\bullet (H)\ar[d]_-{r_{K}^{H}}\ar[r]^-{F(m_H)}_-\cong & F^\bullet (H')\ar[d]^-{r_{K'}^{H'}}\\
F^\bullet (K)\ar[r]_-{F(m_K)}^-\cong & F^\bullet (K')
}
\end{equation}
is commutative.
\item (Thom compatibility)
If  
\begin{equation}
\xymatrix@C=3pc{
H\ar[r]^-{m_H}_-\sim & H'\\
E\ar[u]\ar[r]^-{m_E}_-\sim & E'\ar[u]
}
\end{equation}
is a commutative diagram of Lie groupoid morphisms with $m_H,m_E$ being Morita equivalences and the vertical maps are $F^\bullet$-oriented vb-groupoids projections, then the following diagram of $R$-module isomorphisms
\begin{equation}
\xymatrix{
F^\bullet (H)\ar[d]_-{T_{H,E}}\ar[r]^-{F(m_H)}_-\cong & F^\bullet (H')\ar[d]^-{T_{H',K'}}\\
F^\bullet (E)\ar[r]_-{F(m_E)}^-\cong & F^\bullet (E')
}
\end{equation}
is commutative.
\end{itemize}

\end{definition}

\begin{definition}
Consider a \theory-theory $F^\bullet$ with the Thom isomorphism property and satisfying Morita invariance. Let $G$ be a Lie groupoid. Let $f:G_1\longrightarrow G_2$ be a $G$-equivariant Lie groupoid morphism between two \'etale Lie groupoids. We say that it is $F^\bullet$-oriented if the associated immersion $\tilde{f}$ between the asociated semi-direct product groupoids is an $F^\bullet$-oriented immersion (definition above). Under this assumption we can define
\begin{equation}
f_!:F^\bullet(G_1\rtimes G)\to F^\bullet(G_2\rtimes G) 
\end{equation}
as the composition of 
\begin{equation}
\tilde{f}_!:F^\bullet(G_1\rtimes G)\to F^\bullet((G_2\times (G_1^{(0)}\times G_1^{(0)}))\rtimes G),
\end{equation}
defined for immersions as in the previous section, followed by the induced isomorphism
\begin{equation}
F(m):F^\bullet((G_2\times (G_1^{(0)}\times G_1^{(0)}))\rtimes G)\stackrel{\cong}{\longrightarrow} F^\bullet(G_2\rtimes G)
\end{equation}
induced from the canonical Morita equivalence
\begin{equation}
\xymatrix{
(G_2\times (G_1^{(0)}\times G_1^{(0)}))\rtimes G \ar[r]^-m_-{\sim}& G_2\rtimes G
}
\end{equation}
\end{definition}

For getting closer to the applications we have in mind, pushforward functoriality in K-theory and in delocalised orbifold cohomology, we need to make an extra assumption on our $F^\bullet$-theory, that will of course, as we will see below, be satisfied in the geometric contexts we will treat below.

\begin{definition}\label{defOdefproperty}
Consider a \theory-theory $F^\bullet$ with the Thom isomorphism property and satisfying Morita invariance. We say that it satisfies the orientation deformation property if given a commutative diagram 
\begin{equation}
\xymatrix{
G_1\ar[r]^-{i_1}\ar[d]_-{j_1}&G_2\ar[d]^-{j_2}\\
H_1\ar[r]_-{i_2}&H_2
}
\end{equation}
of Lie groupoid $F^\bullet$-oriented immersions such that $\dnc(i_2,i_1)$ and $\dnc(j_2,i_1)$ are Lie groupoids immersions one has that these deformation immersions, $\dnc(i_2,i_1)$ and $\dnc(j_2,i_1)$, are as well $F^\bullet$-oriented.
\end{definition}

The following theorem is a corollary of theorem \ref{pushforwardfunctII}.

\begin{theorem}[Equivariant wrong way $F^\bullet$-functoriality for \'etale Lie groupoids]\label{thmFetale}
Consider a \theory-theory $F^\bullet$ with the Thom isomorphism property, satisfying Morita invariance and with the orientation deformation property. Let $G$ be a Lie grouppoid.
Let 
\begin{equation}\label{etaletriangle}
\xymatrix{
G_1\ar[rd]_-{f_2}\ar[r]^-{f_1}&G_2\ar[d]^-{g_2}\\
&H_2
}
\end{equation}
be a commutative diagram of Lie groupoid $F^\bullet$-oriented $G$-equivariant morphisms between \'etale groupoids. Then, the following equality of $R$-module morphisms holds:
\begin{equation}
g_{2!}\circ f_{1!}=f_{2!}
\end{equation}
\end{theorem}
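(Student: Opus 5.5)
We reduce everything to Theorem \ref{pushforwardfunctII} applied to a square of immersions of the form of Corollary \ref{cor-monosquare-specialcase}, with Morita invariance used to move between the immersion models. Write $X_k=G_k^{(0)}$ and put $P_1=X_1\times X_1$ and $P_2=X_2\times X_2$ for the pair groupoids. By definition, $f_{1!}=F(m_1)\circ \tilde f_{1!}$, $g_{2!}=F(m_2)\circ\tilde g_{2!}$ and $f_{2!}=F(m_3)\circ \tilde f_{2!}$. Here $\tilde f_1:G_1\rtimes G\to (G_2\times P_1)\rtimes G$, $\tilde g_2:G_2\rtimes G\to (H_2\times P_2)\rtimes G$ and $\tilde f_2:G_1\rtimes G\to (H_2\times P_1)\rtimes G$. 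To compare them we consider the big target $K:=(H_2\times P_2\times P_1)\rtimes G$, with the immersions
\[
\iota:=(\tilde g_2\circ \mathrm{pr}_{G_2},\mathrm{pr}_{P_1}) : (G_2\times P_1)\rtimes G\to K,\qquad \kappa:=(f_2,(s,t)_{G_2}\circ f_1,(s,t)_{G_1}): G_1\rtimes G\to K .
\]
Then $\kappa=\iota\circ\tilde f_1$. Also, $\kappa$ factors as $\tilde f_2$ followed by the immersion $\lambda:(H_2\times P_1)\rtimes G\to K$ that inserts $(s,t)_{G_2}\circ f_1$ on objects into the $P_2$ factor; this makes sense because $f_1$ sends objects of $X_1$ to $X_2$. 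The étaleness of $G_1$ and $G_2$ guarantees that all these maps are immersions.

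First, I apply Theorem \ref{pushforwardfunctII} to the square with top $\tilde f_1$, left $Id$, right $\iota$ and bottom $\kappa$. Corollary \ref{cor-monosquare-specialcase} gives the immersion hypotheses on the normal differentials. The orientation deformation property (Definition \ref{defOdefproperty}) makes $\dnc(\iota,Id)$ and $\dnc(\kappa,\tilde f_1)$ $F^\bullet$-oriented. This yields $\iota_!\circ\tilde f_{1!}=\kappa_!$. Applying the same argument to the square with top $\tilde f_2$, right $\lambda$ and bottom $\kappa$ yields $\lambda_!\circ\tilde f_{2!}=\kappa_!$. The orientations of $\iota,\kappa,\lambda$ are transported from those of $\tilde g_2$ and $\tilde f_2$, since their normal groupoids differ by Morita equivalences in the pair-groupoid factors, and the Thom compatibility clause of Definition \ref{defMoritainv} is used here.

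Second, I show two Morita compatibilities. The first is that $\iota_!$ corresponds to $\tilde g_{2!}$ under the Morita equivalences $m:(G_2\times P_1)\rtimes G\sim G_2\rtimes G$ and $K\sim (H_2\times P_2)\rtimes G$ that forget $P_1$. The second is that $F(\text{forget }P_2)\circ\lambda_!$ agrees with $F(\text{forget }P_2,\text{keep }P_1)$ up to the canonical equivalences, so that $\lambda_!$ acts as a Morita isomorphism. For the first, the deformation groupoid $\dnc(\iota)$ is Morita equivalent, compatibly with the restrictions at $t=0,1$, to $\dnc(\tilde g_2)$ via $\dnc$ of the projection forgetting $P_1$. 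The naturality clause of Definition \ref{defMoritainv} then commutes $F(m)$ with $r_0$ and $r_1$, and the Thom compatibility clause commutes it with $T$. The second statement is the analogous computation for a section-type immersion into a pair groupoid: its normal groupoid $N(\lambda)$ is Morita equivalent to the base, and the deformation index should reduce to the Morita isomorphism.

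Chaining these identities together with the functoriality of $F(m)$ gives $g_{2!}\circ f_{1!}=f_{2!}$. The main obstacle is the second step. There one must construct, at the level of deformation groupoids, Morita equivalences that are compatible with the saturated restrictions at $t=0$ and $t=1$, and one must check that the Thom isomorphisms match. For $\lambda$ in particular, the index has to be identified with a Morita map, which requires using (A2) together with the homotopy-like behaviour of $\dnc$ of an immersion into a pair groupoid. I would first try to exhibit $\dnc(\lambda)$ as Morita equivalent to $\dnc$ of the identity, namely $(H_2\times P_1)\rtimes G\times\mathbb{R}$, compatibly with the restrictions, so that $r_1\circ r_0^{-1}$ becomes the identity modulo $F(m)$.
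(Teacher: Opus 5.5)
Your core argument is the paper's own. Your $\iota$ and $\kappa$ are the paper's $\tilde{\tilde{g_2}}$ and $\tilde{\tilde{f_2}}$, with the same target $(H_2\times M_1^2\times M_2^2)\rtimes G$. You get $\iota_!\circ\tilde f_{1!}=\kappa_!$ from Theorem \ref{pushforwardfunctII}, Corollary \ref{cor-monosquare-specialcase} and the orientation deformation property. You then pass from $\iota$ to $\tilde g_2$ through the Morita equivalence forgetting $P_1$, using naturality and Thom compatibility.

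You diverge in comparing $\kappa$ with $\tilde f_2$. The paper notes that $\mathrm{pr}_{P_2}\circ\kappa=\tilde f_2$. This is a commutative square with $\mathrm{Id}$ on $G_1\rtimes G$ and the Morita equivalence $\mathrm{pr}_{P_2}$ on the targets, the same shape as your square for $\iota$ versus $\tilde g_2$. So the argument you already use (naturality of $F(\dnc(\mathrm{pr}_{P_2},\mathrm{Id}))$ under $r_0,r_1$, plus Thom compatibility) gives $F(\mathrm{pr}_{P_2})\circ\kappa_!=\tilde f_{2!}$ directly. Chaining with $m_2\circ\mathrm{pr}_{P_1}=m_3\circ\mathrm{pr}_{P_2}$ then finishes the proof. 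You instead factor $\kappa=\lambda\circ\tilde f_2$ and invoke Theorem \ref{pushforwardfunctII} a second time. That leaves you needing $F(\mathrm{pr}_{P_2})\circ\lambda_!=\mathrm{Id}$, which you leave unproved as the ``main obstacle''.

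That detour is not wrong, but as you sketch it, its last step uses a fact not among the stated axioms. After comparing $\dnc(\lambda)$ with $\dnc(\mathrm{Id}_{\mathcal H})=\mathcal H\times\mathbb R$, where $\mathcal H=(H_2\times P_1)\rtimes G$, you need the trivial deformation to have trivial index, $r_1=r_0$ on $F^\bullet(\mathcal H\times\mathbb R)$. You assert this without comment, and it is a homotopy-invariance statement. It can be extracted from (A1)--(A2) if automorphisms count as closed saturated inclusions:
\begin{itemize}
\item A scaling $\phi_a$ of the $\mathbb R$-factor fixes the zero fiber, so $r_0\circ r_{\phi_a}=r_0$. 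Since $r_0$ is invertible, this forces $r_{\phi_a}=\mathrm{Id}$.
\item The translations $T_a=\phi_a T_1\phi_a^{-1}$ therefore all induce the same map. Hence $r_{T_1}=r_{T_2}=r_{T_1}^2$, so $r_{T_1}=\mathrm{Id}$, and $r_1=r_0\circ r_{T_1}=r_0$.
\end{itemize}
You would also have to fix the orientation of $\lambda$ as the one transported from the trivial orientation of the zero vb-groupoid. It is simpler to drop $\lambda$ altogether. In either route, the remaining geometric input is that $\dnc$ of such a Morita square is again a Morita equivalence restricting to $N(m_B,m_A)$ at $0$ and $m_B$ at $1$. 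This holds here because the extra pair-groupoid factors make these deformation groupoids pullbacks along surjective submersions. The paper leaves that check implicit as well.
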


\begin{proof}
Let us fix some extra notation, for $i=1,2, $ we will denote by $M_i$ the base of the groupoid $G_i$ and by $N_2$ the base of the groupoid $H_2$.

Associated to diagram (\ref{etaletriangle}) we have the following commutative diagram of Lie groupoid immersions
\begin{equation}
\xymatrix{
G_ 1\ar[rr]^-{\tilde{f_1}}\ar[d]_-{Id_{G_1}}&&G_2\times (M_1)^2\ar[d]^-{\tilde{\tilde{g_2}}}\\
G_1\ar[rr]_-{\tilde{\tilde{f_2}}}&&H_2\times (M_1)^2\times (M_2)^2
}
\end{equation}
where $\tilde{f_1}$ is given as (\ref{tildeetale}) above, where, for $(\gamma,(x,y))\in G_2\times (M_1)^2$
\begin{equation}
\tilde{\tilde{g_2}}(\gamma,(x,y))=(g_2(\gamma),(x,y),(s_{G_2}(\gamma),t_{G_2}(\gamma)))
\end{equation}
and where, for $\eta\in G_1$
\begin{equation}
\tilde{\tilde{f_2}}(\eta)=(f_2(\eta),(s_{G_1}(\eta),t_{G_1}(\eta)),(f_1^{(0)}(s_{G_1}(\eta)),f_1^{(0)}(t_{G_1}(\eta)))).
\end{equation}
Now, the double tilde maps $\tilde{\tilde{f_2}}$ and $\tilde{\tilde{g_2}}$ are not exactly the ones used to define  the respective pushforward maps but they are K-equivalent to them in the sense that they fit in the following commutative diagram
\begin{equation}
\xymatrix{
G_ 1\ar[rr]^-{\tilde{f_1}}\ar[d]_-{Id_{G_1}}&&G_2\times (M_1)^2\ar[d]^-{\tilde{\tilde{g_2}}}\ar[r]^-\sim&G_2\ar[d]^-{\tilde{g_2}}\\
G_1\ar[rr]_-{\tilde{\tilde{f_2}}}\ar[d]_{Id_{G_1}} &&H_2\times (M_1)^2\times  (M_2)^2\ar[r]^-\sim\ar[d]_-\sim& H_2\times (M_2)^2\ar[d]^-\sim\\
G_1\ar[rr]_-{\tilde{f_2}}&&H_2\times (M_1)^2\ar[r]^-\sim& H_2
}
\end{equation}
where the maps labeled with a $\sim$ are canonical Morita equivalence that induce canonical isomorphisms (which are natural and Thom compatible in the sense if definition \ref{defMoritainv}) in $K$-theory.
So, in order to obtain the theorem's conclusion as an immediate corollary of the Morita invariance property and of theorem \ref{pushforwardfunctII} we just need to check theorem's \ref{pushforwardfunctII} hypothesis for the diagram 
\begin{equation}\label{auxdiagetale}
\xymatrix{
G_ 1\rtimes G\ar[rr]^-{\tilde{f_1}\times Id_G}\ar[d]_-{Id}&&(G_2\times (M_1)^2)\rtimes G\ar[d]^-{\tilde{\tilde{g_2}}\times Id_G}\\
G_1\rtimes G\ar[rr]_-{\tilde{\tilde{f_2}}\times Id_G}&&(H_2\times (M_1)^2\times (M_2)^2)\rtimes G
}
\end{equation}
 but these are  satisfied by corollary \ref{cor-monosquare-specialcase} and by hypothesis on the present theorem.
\end{proof}

\subsection{Wrong way functoriality for equivariant (twisted) orbifold K-theory}\label{subsectionOrbifold}

We can now come to our main explicit example of an \theory-theory. For a Lie groupoid $H$ we consider in this section the theory
\begin{equation}
K^*(H):=K_*(C^*(H))
\end{equation}
given by taking the K-theory group\footnote{For the purpose of this paper $K_*=K_0\oplus K_1$. For further applications it will be necessary to keep into account the degree and the shifting of degrees when taking the pushforward maps but for the moment we do not need this.} of its maximal $C^*$-algebra.

Now, by classical K-theoretical functorial properties, we do have that this theory $K^*(H):=K_*(C^*(H))$ is a \theory-theory in the sense required above in section \ref{sectiondefindices}. Another important point is that two Morita equivalent Lie groupoids give rise to strongly Morita equivalent $C^*$-algebras and hence to isomorphic $K$-theory groups (see \cite{Willtool} theorem 2.70 for the Morita invariance and \cite{Ror} for classic K-theory properties of $C^*$-algebras).

{\bf On the Thom isomorphism property:} Given a Lie groupoid $H$ and $vb$-groupoid $E$ over $H$ we would expect that under some good $spin^c$-properties we would have a Thom isomorphism
\begin{equation}
K^*(H)\stackrel{\cong}{\longrightarrow}K^*(E).
\end{equation}
However, as far as the authors are aware, this is not known or at least not properly established in the litterature, at least for the general case of vb-groupoids. Even for other (co)homological theories for (differentiable stacks) Lie groupoids this seems to be unknown as well. Now, there is however a good news. In K-theory (and even in KK-theory) th Thom isomorphism is known for the case in which one has equivariant $spin^c$-vector bundles with the respect to an action of a Lie groupoid. So, if $H\rightrightarrows H^{(0)}$ is a Lie groupoid and $E\to P$ a $H$-equivariant $spin^c$ $H$-vector bundle then there is a Thom isomorphism
\begin{equation}\label{KequivariantThom}
\xymatrix{
K^*(P\rtimes H)\ar[rr]^-{T_{P\rtimes H,E\rtimes H}}_-{\cong} && K^*(E\rtimes H)
}
\end{equation}
satisfying the abstract Thom properties of the previous section. For more details on this and on its twisted K-theoretical version the reader can see the appendix A2 in \cite{CaWangAENS}, \cite{CW} and \cite{LeGall}.

We will then restrict ourselves to a class of groupoids for which the Thom isomorphism property holds because they will fit in the case of groupoid equivariant Thom isomorphism as just explained above. Also, we will need that the condition on being K-oriented is not to restrictive. All this will be fullfilled if we play with orbifold groupoids, which by the way were our original motivation for this article.

\begin{definition}
Let $\mathfrak{X}\rightrightarrows \mathfrak{X}^{(0)}$ be a Lie groupoid with source and target maps $s$ and $t$ respectively. The groupoid is called an orbifold groupoid if it is \'etale ($s$ and $t$ are local diffeomorphisms) and proper, meaning that the map
\begin{equation}
(s,t): \mathfrak{X}\to \mathfrak{X}^{(0)}\times \mathfrak{X}^{(0)}
\end{equation}
is proper.
\end{definition}

Let $\mathfrak{X}_i \rightrightarrows M_i$ be two orbifold groupoids. Consider a Lie groupoid morphism
\begin{equation}
f:\mathfrak{X}_1\longrightarrow \mathfrak{X}_2.
\end{equation}

Associated to such an $f$ we have the following smooth map
\begin{equation}\label{deftildef}
\xymatrix{
\mathfrak{X}_1\ar[r]^-{\tilde{f}}& \mathfrak{X}_2 \times (M_1\times M_1),
}
\end{equation}
which is a (not always injective) immersion of Lie groupoids, where $\tilde{f}=(f, (s,t))$ as in the previous section. In particular, we have its associated normal bundle groupoid
\begin{equation}
N(\mathfrak{X}_2 \times (M_1\times M_1),\mathfrak{X}_1)\rightrightarrows N(M_2\times M_1, M_1).
\end{equation} 

On the other hand, we have the semi-direct product groupoid
\begin{equation}
(TM_1\ltimes f_0^*TM_2)\rtimes \mathfrak{X}_1 \rightrightarrows 
f_0^*TM_2
\end{equation}
described expicitly in the appendix, see equation (\ref{normalgrpdorbimorphism}).

We have the following result.

\begin{proposition}\label{normalgrpdiso}
Let $\mathfrak{X}_i \rightrightarrows M_i$ be two orbifold groupoids. Consider a Lie groupoid morphism
\begin{equation}
f:\mathfrak{X}_1\longrightarrow \mathfrak{X}_2.
\end{equation}
There is a Lie groupoid isomorphism
\begin{equation}
\xymatrix{
N(\mathfrak{X}_2 \times (M_1\times M_1),\mathfrak{X}_1) \ar@<.5ex>[d]\ar@<-.5ex>[d] \ar[r]^-\phi_-\cong & (TM_1\ltimes f_0^*TM_2)\rtimes \mathfrak{X}_1 \ar@<.5ex>[d]\ar@<-.5ex>[d] \\
N(M_2\times M_1, M_1) \ar[r]^-\cong_-{\phi_0} & f_0^*TM_2
}
\end{equation}
\end{proposition}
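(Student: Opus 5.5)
The plan is to compute the normal bundle of $\tilde f=(f,(s,t))$ directly, using étaleness to identify every tangent space of $\mathfrak{X}_1$ with a tangent space of $M_1$. I would first treat the units. On units, $\tilde f_0=(f_0,\Delta):M_1\to M_2\times M_1\times M_1$, where $\Delta$ is the diagonal. The quotient
\[
N(M_2\times M_1\times M_1,M_1)_x=\bigl(T_{f_0(x)}M_2\oplus T_xM_1\oplus T_xM_1\bigr)/\{(df_0 v,v,v)\}
\]
is identified with $T_{f_0(x)}M_2\oplus T_xM_1$ via the map $[(w,a,b)]\mapsto (w-df_0(b),\,a-b)$. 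This map is well defined, linear and fiberwise bijective by a dimension count, and it is smooth in $x$. The space $T M_1\oplus f_0^*TM_2=TM_1\times_{M_1} f_0^*TM_2$ is the object space of the action groupoid $TM_1\ltimes f_0^*TM_2$, whose arrows are $(v,w)\mapsto w+df_0(v)$. I will match this with the appendix description (\ref{normalgrpdorbimorphism}) of $(TM_1\ltimes f_0^*TM_2)\rtimes\mathfrak{X}_1$. In that description I expect the unit space to be $f_0^*TM_2$ after the pair-groupoid part is Morita-collapsed, so the precise normalization of $\phi_0$ has to be taken from the appendix.

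Next I would treat the arrows. For $\gamma\in\mathfrak{X}_1$ with $x=s(\gamma)$ and $y=t(\gamma)$, the maps $ds$ and $dt$ are isomorphisms $T_\gamma\mathfrak{X}_1\cong T_xM_1\cong T_yM_1$, because $\mathfrak{X}_1$ is étale. So $T_\gamma\mathfrak{X}_1$ embeds via $u\mapsto (df(u),ds(u),dt(u))$ into $T_{f(\gamma)}\mathfrak{X}_2\oplus T_xM_1\oplus T_yM_1$. The groupoid $\mathfrak{X}_2$ is also étale, so $T_{f(\gamma)}\mathfrak{X}_2\cong T_{f_0(x)}M_2$ via $ds_2$. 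The quotient at $\gamma$ is then $T_{f_0(x)}M_2\oplus T_yM_1$, namely the fiber of $TM_1\oplus f_0^*TM_2$ at the source of $\gamma$ decorated by $\gamma$. I define $\phi$ by mimicking the formula for $\phi_0$: for a class $[(w,a,b)]$, set $a'=d(t\circ s^{-1})(a)$ (the germ of $\gamma$ transports $T_xM_1$ to $T_yM_1$), and send the class to $(\gamma,\,b-a',\,ds_2(w)-df_0(a))$. Equivalently, $\phi$ is the arrow of $(TM_1\ltimes f_0^*TM_2)\rtimes\mathfrak{X}_1$ whose $\mathfrak{X}_1$-component is $\gamma$ and whose $TM_1\ltimes f_0^*TM_2$-component is read off from the normal class. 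Smoothness and fiberwise bijectivity follow as for $\phi_0$.

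Then I check that $\phi$ is a groupoid morphism. The source and target of $\mathfrak{X}_2\times M_1^2$ are $s_2\times \mathrm{pr}_1$ and $t_2\times \mathrm{pr}_2$, so by the corollary defining the normal groupoid $N(s)$ and $N(t)$ are induced by these maps. One checks $\phi_0\circ N(s)=s\circ\phi$ and $\phi_0\circ N(t)=t\circ\phi$; the second check uses that $f$ commutes with the germ action, i.e. $dt_2\circ df=df_0\circ dt$ on $T_\gamma\mathfrak{X}_1$. For multiplication, $N(m)$ is induced by $dm$. On an étale groupoid, $dm$ on composable pairs is composition of germ actions, and under $\phi$ this matches the multiplication in the semidirect product, where $\mathfrak{X}_1$ acts on $TM_1\ltimes f_0^*TM_2$ by $d$(germ) on $TM_1$ and by $df$ on $f_0^*TM_2$. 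Units and inverses then follow formally. A bijective smooth groupoid morphism that is fiberwise a linear isomorphism over the diffeomorphism $\mathrm{id}_{\mathfrak{X}_1}$ on bases is a diffeomorphism, so $\phi$ is an isomorphism.

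The main obstacle is the bookkeeping. The normalization of $\phi$ (choosing which of $a$, $b$ is subtracted, and transporting via $\gamma$) has to be exactly compatible with the conventions of (\ref{normalgrpdorbimorphism}) for the action of $\mathfrak{X}_1$ and the product in $TM_1\ltimes f_0^*TM_2$. I would fix those conventions first and then choose $\phi$ so that the target compatibility holds on the nose; the multiplication check then reduces to the chain rule for the composition of germs.
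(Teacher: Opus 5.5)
Your overall strategy is the paper's: use \'etaleness to identify $T\mathfrak{X}_i$ with $s^*TM_i$ or $t^*TM_i$, compute the quotient explicitly, and check the structure maps. However, the unit-level step is wrong as written. The pair groupoid $M_1\times M_1\rightrightarrows M_1$ has unit space $M_1$, so $\mathfrak{X}_2\times(M_1\times M_1)$ has unit space $M_2\times M_1$ and $\tilde f_0=(f_0,\mathrm{id}_{M_1})$. It is not $(f_0,\Delta):M_1\to M_2\times M_1\times M_1$.

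The bundle you computed, $(f_0,\Delta)^*(TM_2\oplus TM_1\oplus TM_1)/TM_1\cong TM_1\oplus f_0^*TM_2$, is the restriction of the \emph{arrow}-level normal bundle to the identity arrows. It is exactly the bundle the paper pulls back by $t$ to describe $N(\tilde f)$, and your formula $[(w,a,b)]\mapsto(w-df_0(b),a-b)$ is a correct version of that step. But it is the arrow space of $TM_1\ltimes f_0^*TM_2$, not its object space, which is $f_0^*TM_2$. Consequently:
\begin{itemize}
\item you never define the base map $\phi_0$ of the statement;
\item the checks $\phi_0\circ N(s)=s\circ\phi$ and $\phi_0\circ N(t)=t\circ\phi$ are not even type-correct with your ``$\phi_0$'';
\item no ``Morita collapse'' is involved, since the statement is an honest isomorphism.
\end{itemize}
What is needed, as in the paper, is
\[
\phi_0:\ N(M_2\times M_1,M_1)_x=\bigl(T_xM_1\oplus T_{f_0(x)}M_2\bigr)/\{(v,df_0(v))\}\longrightarrow T_{f_0(x)}M_2,\qquad [(V,W_0)]\mapsto W_0-df_0(V).
\]

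Second, your arrow-level formula $(\gamma,\,b-a',\,ds_2(w)-df_0(a))$ mixes base points. Here $b-a'\in T_{t(\gamma)}M_1$, while $ds_2(w)-df_0(a)\in T_{f_0(s(\gamma))}M_2$, so this is not a point of $TM_1\oplus f_0^*TM_2$. Since $(TM_1\ltimes f_0^*TM_2)\rtimes\mathfrak{X}_1=(TM_1\ltimes f_0^*TM_2)\times_{\pi,t}\mathfrak{X}_1$, everything must sit over $t(\gamma)$. Transport the second component by $dt_2\circ ds_2^{-1}$ at $f(\gamma)$, and use $ds_2\circ df=df_0\circ ds$ and $dt_2\circ df=df_0\circ dt$. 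It becomes $dt_2(w)-df_0(a')$, and your $\phi$ is then exactly the paper's
\[
\phi([g,V_1,V_2,W])=\bigl((t(g),\,V_1-t_*s_*^{-1}V_2,\,t_*W-(f_0)_*t_*s_*^{-1}V_2),\,g\bigr).
\]
With these two corrections, and signs fixed to match the direction of arrows in $TM_1\ltimes f_0^*TM_2$ as you anticipated, your source, target and multiplication checks go through. They would then supply the verification that the paper only calls a straightforward computation.
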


\begin{proof}
Since $\mathfrak{X}_i$ is an \'etale groupoid we have vector bundle isomorphisms
\begin{equation}
T\mathfrak{X}_i\cong t^*TM_i\,\,\text{and}\,\, T\mathfrak{X}_i\cong s^*TM_i.
\end{equation}
The first isomorphism is explicitly given by the differential of the target map and the second by the differential of the source map.
By definition, $N(\mathfrak{X}_2 \times (M_1\times M_1),\mathfrak{X}_1)$ is the quotient of 
$$(f,(s,t))^*T(\mathfrak{X}_2\times M_1\times M_1)$$
by 
$$T\mathfrak{X}_1\cong t^*TM_1$$
where $T\mathfrak{X}_1$ is send into $(f,(s,t))^*(T(\mathfrak{X}_2)\times M_1\times M_1)$ by the differential of $(f,(s,t))$. Now, a standard computation using the fact that $f$ is a Lie groupoid morphism and classic pullback functoriality properties gives
\begin{equation}
\begin{array}{rcl}
(f,(s,t))^*T(\mathfrak{X}_2\times M_1\times M_1)&\cong& f^*T\mathfrak{X}_2\oplus s^*TM_1\oplus t^*TM_1
\\
&\cong& f^*t^*TM_2\oplus t^*TM_1\oplus t^*TM_1 
\\
&\cong& t^*(f_0^*TM_2\oplus TM_1\oplus TM_1)
\end{array}
\end{equation}
that induces the following isomorphism
\begin{equation}
N(\mathfrak{X}_2 \times (M_1\times M_1),\mathfrak{X}_1)\cong t^*\left(\frac{(f_0,\Delta)^*(TM_2\oplus TM_1\oplus TM_1)}{TM_1}\right),
\end{equation}
where $\Delta$ stands for the diagonal map.
Now, since 
$$\frac{(f_0,\Delta)^*(TM_2\oplus TM_1\oplus TM_1)}{TM_1}$$
is isomorphic to $f_0^*TM_2\oplus TM_1$ by sending a class $[W,V_1,V_2]\to (W,V_1-V_2)$, we obtain an isomorphism 
$$t^*\left(\frac{(f_0,\Delta)^*(TM_2\oplus TM_1\oplus TM_1)}{TM_1}\right)
\cong t^*(f_0^*TM_2\oplus TM_1)=:(TM_1\ltimes f_0^*TM_2)\rtimes \mathfrak{X}_1.$$
Let us explicit the formula of the global isomorphism by unfolding all the previous computations. We define
\begin{equation}
\xymatrix{
N(\mathfrak{X}_2 \times (M_1\times M_1),\mathfrak{X}_1)\ar[r]^\phi & (TM_1\ltimes f_0^*TM_2)\rtimes \mathfrak{X}_1
}
\end{equation}
as follows. For $g\in \mathfrak{X}_1$, $V_1\in T_{t(g)}M_1$, $V_2\in T_{s(g)}M_1$ and $W\in T_{f(g)}\mathfrak{X}_2$ we let
\begin{equation}
\phi([g,V_1,V_2,W])=((t(g),V_1-(t_*\circ s_*^{-1})(V_2), t_*W-(f_0)_*((t_*\circ s_*^{-1})(V_2))),g).
\end{equation}
This is indeed the composition of all the above computations and give thus an isomorphism of vector bundles over $\mathfrak{X}_1$. But moreover, we consider as well
\begin{equation}
\xymatrix{
N(M_2\times M_1, M_1)\ar[r]^-{\phi_0} &f_0^*TM_2
}
\end{equation}
the vector bundle isomorphism given by
\begin{equation}
\phi_0([x,V,W_0])=(x,W_0-(f_0)_*(V))
\end{equation}
for $(V,W_0)\in T_xM_1\times T_{f_0(x)}M_2$. It is now a straightforward computation to chech that these isomorphims fit together into a Lie groupoid isomorphism
\begin{equation}
\xymatrix{
N(\mathfrak{X}_2 \times (M_1\times M_1),\mathfrak{X}_1) \ar@<.5ex>[d]\ar@<-.5ex>[d] \ar[r]^-\phi_-\cong & (TM_1\ltimes f_0^*TM_2)\rtimes \mathfrak{X}_1 \ar@<.5ex>[d]\ar@<-.5ex>[d] \\
N(M_2\times M_1, M_1) \ar[r]^-\cong_-{\phi_0} & f_0^*TM_2.
}
\end{equation}
\end{proof}

Now, under the assumption that $f:\mathfrak{X}_1\longrightarrow \mathfrak{X}_2$ as above is a $G$-equivariant Lie groupoid morphism, with $G\rightrightarrows G_0$ a Lie groupoid, we have an associated smooth map
\begin{equation}\label{Gftilde}
\mathfrak{X}_1\rtimes G\xrightarrow{\tilde{f}\times Id_G} (\mathfrak{X}_2 \times (M_1\times M_1))\rtimes G,
\end{equation}
which is a (not injective) immersion of Lie groupoids, where $\tilde{f}=(f, (s,t))$ as above. A direct computation shows that the associated normal bundle groupoid
\begin{equation}
N((\mathfrak{X}_2 \times (M_1\times M_1))\rtimes G,\mathfrak{X}_1\rtimes G)\rightrightarrows N(M_2\times M_1, M_1)
\end{equation}
is canonically isomorphic to
\begin{equation}
N(\mathfrak{X}_2 \times (M_1\times M_1),\mathfrak{X}_1)\rtimes G\rightrightarrows N(M_2\times M_1, M_1)
\end{equation}
where the action of $G$ on the normal groupoid associated to $\tilde{f}$ is obtained by functoriality of the normal bundle construction.

The following proposition is a step by step adaptation of the proposition above to the equivariant case.

\begin{proposition}\label{normalgrpdisoequivariant}
Let $\mathfrak{X}_i \rightrightarrows M_i$ be two orbifold groupoids. Consider a Lie groupoid morphism
\begin{equation}
f:\mathfrak{X}_1\longrightarrow \mathfrak{X}_2.
\end{equation}
Assume there is $G$ a Lie groupoid acting on the groupoids above in such a way that $f$ is $G$-equivariant Lie groupoid morphism.
Then, there is a Lie groupoid isomorphism
\begin{equation}
\xymatrix{
N(\mathfrak{X}_2 \times (M_1\times M_1),\mathfrak{X}_1))\rtimes G \ar@<.5ex>[d]\ar@<-.5ex>[d] \ar[r]^-\cong & ((TM_1\ltimes f_0^*TM_2)\rtimes \mathfrak{X}_1)\rtimes G \ar@<.5ex>[d]\ar@<-.5ex>[d] \\
N(M_2\times M_1, M_1) \ar[r]_-\cong & f_0^*TM_2
}
\end{equation}
\end{proposition}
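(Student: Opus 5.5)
The plan is to take the isomorphism $\phi$ of Proposition \ref{normalgrpdiso}, check that it is $G$-equivariant, and then apply the semi-direct product construction to both sides. By the remark preceding the statement, the normal groupoid of $\tilde f\times Id_G$ is canonically $N(\mathfrak{X}_2\times(M_1\times M_1),\mathfrak{X}_1)\rtimes G$. The $G$-action on this normal groupoid is obtained by functoriality of the normal bundle construction, namely by applying $N(\cdot,\cdot)$ to the action maps, which are compatible with $\tilde f$. Hence it suffices to produce a $G$-equivariant Lie groupoid isomorphism
$$\phi: N(\mathfrak{X}_2 \times (M_1\times M_1),\mathfrak{X}_1)\to (TM_1\ltimes f_0^*TM_2)\rtimes \mathfrak{X}_1,$$
covering a $G$-equivariant $\phi_0$. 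A $G$-equivariant isomorphism $\phi$ of $G$-groupoids over compatible moment maps induces $\phi\times Id_G$ on the fibered products $(\cdot)\times_{G_0}G$. This map is automatically a Lie groupoid isomorphism of the semi-direct products, since those structure maps are built only from the groupoid structure and the action.

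The steps are as follows. First, I will make the $G$-actions explicit. $G$ acts on $\mathfrak{X}_i$ and $M_i$ through moment maps $p_i$ by groupoid automorphisms $\gamma\cdot(\cdot)$ with $s,t$ equivariant, and $f$ commutes with the action. The action on $M_1\times M_1$ is diagonal over $G_0$. The action on the target groupoid $(TM_1\ltimes f_0^*TM_2)\rtimes \mathfrak{X}_1$ (see (\ref{normalgrpdorbimorphism})) is by differentials: $\gamma\cdot((x,V,W),g)=((\gamma x,(\gamma)_*V,(\gamma)_*W),\gamma g)$. The action on the normal groupoid is induced by the differential of $(\gamma,\gamma,\gamma)$ on $\mathfrak{X}_2\times M_1\times M_1$, which preserves $T\mathfrak{X}_1$. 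Second, I will verify, on representatives, that the explicit formula
$$\phi([g,V_1,V_2,W])=\big((t(g),V_1-(t_*\circ s_*^{-1})(V_2), t_*W-(f_0)_*((t_*\circ s_*^{-1})(V_2))),g\big)$$
intertwines these actions, and likewise for $\phi_0([x,V,W_0])=(x,W_0-(f_0)_*V)$. This follows from the chain rule together with the identities $t\circ\gamma=\gamma\circ t$, $s\circ\gamma=\gamma\circ s$ and $f_0\circ\gamma=\gamma\circ f_0$. The relevant consequences are $\gamma_*\circ t_*=t_*\circ\gamma_*$, the corresponding identity for $s_*^{-1}$ on the étale groupoid, and $\gamma_*\circ (f_0)_*=(f_0)_*\circ\gamma_*$. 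Third, I will check that the moment maps correspond, which holds because $\phi$ covers $Id_{\mathfrak{X}_1}$ and the moment maps factor through the base points. Finally, I will form the semi-direct products and conclude.

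I expect the main obstacle to be the precise identification of the induced action on the normal groupoid. One must check that the action maps $\mathfrak{X}_2\times M_1\times M_1\times_{G_0}G\to \mathfrak{X}_2\times M_1\times M_1$ give morphisms of triples. Their normal differentials then have to be taken fibrewise in $G$ (the fibered product with $G$ is not a product), and the result must assemble to a smooth action compatible with the quotient by $T\mathfrak{X}_1$. To handle this, I would work locally: for fixed $\gamma\in G$, the action is a diffeomorphism between fibres of the moment maps, so the normal differential is well defined there. Smoothness in $\gamma$ then follows from the smoothness of the action map and Proposition 3.4-type functoriality applied to the triple fibered over $G_0$.
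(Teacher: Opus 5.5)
Your plan is essentially the paper's own argument. The paper only declares the result a step-by-step equivariant adaptation of Proposition \ref{normalgrpdiso}, with the $G$-action on $N(\tilde f)$ obtained by functoriality of the normal bundle, and checking that $\phi,\phi_0$ intertwine the $G$-actions before passing to semi-direct products is exactly that adaptation.

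One caveat applies to both your argument and the paper's: for a general Lie groupoid $G$, an arrow $\gamma$ only maps the moment-map fibre over $t(\gamma)$ to the one over $s(\gamma)$, so $\gamma_*V$ and your fibrewise normal differential make sense only on vertical vectors, i.e.\ for the fibred triple $\mathfrak{X}_2\times_{G_0}M_1\times_{G_0}M_1$ of the paper's semi-direct product remark. What the argument then actually proves is the version with $T^\pi M_1$ and $f_0^*T^\pi M_2$; the literal statement with the full $TM_i$ needs extra structure (it is canonical, e.g., when $G$ is a Lie group or \'etale).
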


\begin{definition}
Let $f:\mathfrak{X}_1\longrightarrow \mathfrak{X}_2$ be a $G$-equivariant Lie groupoid morphism between two orbifold groupoids as above. We say that $f$ is $G$-$K$-oriented if the vector bundle
$$T^*M_1\oplus f_0^*TM_2\to M_1$$
admits a $\mathfrak{X}_1\rtimes G$-$spin^c$-structure. In that case, set  $$K_G^*(\mathfrak{X}:)=K^*(\mathfrak{X}\rtimes G),$$
and define
\begin{equation}
f_!: K_G^*(\mathfrak{X}_1)\to K_G^*(\mathfrak{X}_2) 
\end{equation}
as the following composite:
\begin{enumerate}
\item the Thom isomorphism (described in appendix~\ref{appendixThomfdef})
\begin{equation}\label{Thomftilde}
\xymatrix{
T_{\tilde{f}}:K^*_G(\mathfrak{X}_1)\ar[r]^-\cong & K^*_G(N(\tilde{f})) 
},
\end{equation} 
\item the deformation index morphism associated to the groupoid immersion (\ref{Gftilde})
\begin{equation}
K_G^*(N(\tilde{f}))\xrightarrow{\quad\Ind_{K_ G^*}^{\tilde{f}}\quad } K_G^*(\mathfrak{X}_2 \times (M_1\times M_1)),
\end{equation}
\item the natural isomorphism
\begin{equation}
\xymatrix{
K_G^*(\mathfrak{X}_2 \times (M_1\times M_1))\ar[rr]^-{K_ G^*(m)}_-\cong&& K_G^*(\mathfrak{X}_2)
}
\end{equation}
induced by the canonical Morita equivalence 
$$
\xymatrix{
(\mathfrak{X}_2 \times (M_1\times M_1))\rtimes G \ar[r]^-m_-\sim & \mathfrak{X}_2\rtimes G.
}
$$
\end{enumerate}
\end{definition}

Let us state our main theorem for equivariant orbifold K-theory.

\begin{theorem}[Wrong way functoriality for equivariant orbifold K-theory]
Let $G$ be a Lie groupoid.
Let 
\begin{equation}\label{orbifoldsquare}
\xymatrix{
\mathfrak{X}_ 1\ar[r]^-{f}\ar[dr]_-{h}&\mathfrak{X}_2\ar[d]^-{g}\\
&\mathfrak{X}_3
}
\end{equation}
be a commutative diagram $K$-oriented $G$-equivariant Lie groupoid morphisms between orbifold groupoids. Then, the following diagram of abelian group morphisms commutes:
\begin{equation}
\xymatrix{
K^*_G(\mathfrak{X}_ 1)\ar[r]^-{f_!}\ar[dr]_-{h_!}&K^*_G(\mathfrak{X}_2)\ar[d]^-{g_!}\\
&K^*_G(\mathfrak{X}_3)
}
\end{equation}
\end{theorem}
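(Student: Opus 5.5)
The plan is to deduce the statement from Theorem~\ref{thmFetale}, applied to the \theory-theory $K^*(H)=K_*(C^*(H))$ on the \'etale groupoids $\mathfrak{X}_1,\mathfrak{X}_2,\mathfrak{X}_3$ with the external groupoid $G$. Axioms (A1) and (A2) hold for this theory by standard $C^*$-algebraic facts. Restriction to a closed saturated subgroupoid gives a surjection of $C^*$-algebras. The restriction $r_0$ of a deformation groupoid is an isomorphism because its kernel is $C_0((0,1])\otimes C^*(G_2)$, which is contractible. Morita invariance, including the naturality and Thom compatibility clauses of Definition~\ref{defMoritainv}, follows from the strong Morita equivalence of $C^*$-algebras of Morita equivalent groupoids and its compatibility with restriction ideals. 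So the work reduces to two tasks: checking the Thom isomorphism property on the vb-groupoids that actually occur, and checking the orientation deformation property.

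For the Thom isomorphisms I will use Proposition~\ref{normalgrpdisoequivariant}. It identifies the normal groupoid of $\tilde f\times Id_G$ with the action groupoid $((TM_1\ltimes f_0^*TM_2)\rtimes\mathfrak{X}_1)\rtimes G$. By Morita invariance, this has the same $K$-theory as the action groupoid of the $\mathfrak{X}_1\rtimes G$-equivariant vector bundle $TM_1\oplus f_0^*TM_2$ over $M_1$, and after the identification $TM_1\cong T^*M_1$ the $K$-orientation hypothesis gives it a $spin^c$-structure. The equivariant Thom isomorphism~(\ref{KequivariantThom}) then supplies $T_{\tilde f}$. Naturality~(\ref{Thomnatural}) follows from the naturality of the equivariant Thom class under restriction to saturated subsets. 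The two-step identity~(\ref{ThomThom}) follows from multiplicativity of Thom classes: for a square of equivariant bundles, both composites equal the Thom isomorphism of the direct sum bundle with the sum $spin^c$-structure. I will apply the same reasoning to the double-tilde immersions $\tilde{\tilde{g}}$ and $\tilde{\tilde{h}}$ in the proof of Theorem~\ref{thmFetale}. Their normal groupoids are again, by the same computation as in Proposition~\ref{normalgrpdiso}, action groupoids of equivariant bundles. Those bundles differ from the ones defining $g_!$ and $h_!$ only by pair-groupoid tangent factors $TM_i\oplus TM_i$, which carry canonical $spin^c$-structures. Thom compatibility with the Morita equivalences $\sim$ in that proof then makes the double-tilde pushforwards agree with $g_!$ and $h_!$.

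The main obstacle is the orientation deformation property. This requires that the deformation immersions $\dnc(j_2,j_1)$ and $\dnc(i_2,i_1)$ attached to the square~(\ref{auxdiagetale}) be $K$-oriented, with Thom isomorphisms compatible with the restrictions at $t=0,1$. I will first use Corollary~\ref{symmetryDNC} to identify $N(\dnc(j_2,j_1))$ with $\dnc(N(i_2,i_1))$. Since $j_1=Id$ and all groupoids involved are \'etale up to pair-groupoid factors, the explicit description of Proposition~\ref{normalgrpdiso} should extend in the deformation parameter. Concretely, $\dnc$ of the relevant immersion should be Morita equivalent to an action groupoid of an equivariant vector bundle over $M_1\times\mathbb{R}$ (or over a deformation space) pulled back from $M_1$. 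It then inherits a $spin^c$-structure from the one on $T^*M_1\oplus f_0^*TM_2$ extended constantly in $t$, and the family version of the equivariant Thom isomorphism gives the required orientation with both restriction compatibilities. With this verified, Theorem~\ref{thmFetale} yields $g_!\circ f_!=h_!$.
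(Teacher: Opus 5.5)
\textbf{Verdict: genuine gap.} Your reduction is the paper's: check that $K_*(C^*(\cdot))$ has the Thom and orientation deformation properties for the square (\ref{auxdiagetale}), then invoke Theorem \ref{thmFetale}. The gap is in the step you yourself single out, the orientation deformation property.

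\textbf{The normal groupoid of $\dnc(j_2,j_1)$ is not over $M_1$.} By Corollary \ref{symmetryDNC}, $N(\dnc(j_2,j_1))\cong\dnc(N(i_2,i_1))$ is a vb-groupoid over $\dnc(G_2,G_1,i_1)$. For $t\neq0$ it is $N(j_2)$, the normal groupoid of $\tilde{\tilde g}\times Id_G$ over $G_2=(\mathfrak{X}_2\times M_1^2)\rtimes G$. Its underlying bundle is pulled back from $M_2$, and it is oriented by $g$'s $spin^c$ structure on $T^*M_2\oplus g_0^*TM_3$, of rank $\dim M_2+\dim M_3$. So it is not Morita equivalent to anything pulled back from $M_1$. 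Moreover $T^*M_1\oplus f_0^*TM_2$, of rank $\dim M_1+\dim M_2$, is not the relevant bundle at all, so extending $f$'s structure constantly in $t$ orients nothing here.

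\textbf{The missing idea is the paper's gluing argument.} The cocycle $G_2\dashrightarrow spin^c(n)$ comes from the orientation of $j_2$. The cocycle $N(G_2,G_1)\dashrightarrow spin^c(n)$ comes from its normal differential $N(j_2,j_1)$, and is the $f_0$-pullback of $g$'s structure. By functoriality of $\dnc$ these glue into a single cocycle on $\dnc(G_2,G_1)$. That cocycle orients $\dnc(j_2,j_1)$ compatibly with both restrictions $t=0$ and $t=1$.

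\textbf{The other deformation immersion also needs an unstated hypothesis.} For $\dnc(i_2,i_1)$ the base really is $G_1\times\mathbb{R}$, but the structure is not ``constant'' in your sense either. At $t\neq0$ it is the orientation of $h$, up to the canonical complex factor $f_0^*(TM_2\oplus T^*M_2)$. At $t=0$ it is the sum of $f$'s orientation and the pullback of $g$'s. A $spin^c$ structure over $G_1\times\mathbb{R}$ restricting to both exists only if the orientation of $h$ is the composite of those of $f$ and $g$. You should state that hypothesis explicitly.

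\textbf{Secondary error: the Thom isomorphism does not come from Morita invariance.} The groupoid $((TM_1\ltimes f_0^*TM_2)\rtimes\mathfrak{X}_1)\rtimes G$ has unit space $f_0^*TM_2$. It is not Morita equivalent to the action groupoid of $TM_1\oplus f_0^*TM_2$. Take $\mathfrak{X}_1=M_1$, $M_2$ a point and $G$ trivial: you would be comparing the bundle of groups $TM_1\rightrightarrows M_1$ (orbit space $M_1$, isotropy $T_xM_1$) with the space $TM_1$, which have different orbit spaces and isotropy.

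The $K$-groups are instead identified as in Definition \ref{appendixThomfdef}. One uses a fibrewise Fourier transform in the $TM_1$-direction. When $df_0\neq 0$, one also uses the homotopy deforming the $df_0$-action to the trivial one. Once you adopt that construction, your multiplicativity argument for (\ref{ThomThom}) must also be carried through the Fourier and deformation steps. The paper delegates exactly this to Le Gall's descent of the classical Thom isomorphism.
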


\begin{proof}
Let us fix some extra notation, for $i=1,2,3, $ we will denote by $M_i$ the base of the groupoid $\mathfrak{X}_i$.
Associated to diagram (\ref{orbifoldsquare}) we have the following commutative diagram of Lie groupoid immersions
\begin{equation}
\xymatrix{
\mathfrak{X}_ 1\ar[rr]^-{\tilde{f}}\ar[d]_-{Id}&&\mathfrak{X}_2\times (M_1)^2\ar[d]^-{\tilde{\tilde{g}}}\\
\mathfrak{X}_1\ar[rr]_-{\tilde{\tilde{h}}}&&\mathfrak{X}_3\times (M_1)^2\times (M_2)^2
}
\end{equation}
where $\tilde{f}$ is given as (\ref{deftildef}) above, and where, up to evident permutation of factors, 
\begin{equation}\label{tildetilde2}
\tilde{\tilde{g}}=\tilde{g} \times Id_{(M_1^2)}
\end{equation}
and
\begin{equation}\label{tildetilde3}
\tilde{\tilde{h}}=(\tilde{h}, (f_0\circ s_1,f_0\circ t_1)).
\end{equation}

For make easier the lecture with respect to the previous sections, we denote 
\begin{equation}
i_1=\tilde{f}\times Id_G, j_1=Id_{\mathfrak{X}_1}\times Id_G, i_2=\tilde{\tilde{h}}\times Id_G, j_2=\tilde{\tilde{g}}\times Id_G.
\end{equation}

In order to be able to obtain this theorem's conclusion as a corollary of theorem \ref{thmFetale} or as direct corollary of theorem \ref{pushforwardfunctII} we need to show that, in the context of the statement of the present theorem we have the same properties and hypothesis, that is:

\begin{enumerate}\label{2hypothesis}
\item The Thom isomorphism property holds, either as in the general case of the Thom property \ref{ThomThom} or as its particular case used for concluding theorem \ref{pushforwardfunctII} proof, that is, equation (\ref{ThomThomnormalequation}) that writes down as 
\begin{equation}
T_{N(j_2,j_1)}\circ T_{i_1}=T_{N(i_2,i_1)}
\end{equation}
in the present case since $j_1=Id$.
\item The orientation deformation property in $K$-theory for this case, that is, we need to justify that
both $\dnc(i_2,i_1)$ and $\dnc(j_2,j_1)$ are K-oriented Lie groupoid immersions. 
\end{enumerate} 

Following proposition \ref{normalgrpdisoequivariant} together with (\ref{tildetilde2}) and (\ref{tildetilde3}), one obtains by a straightforward computation an explicit description for $N(j_2,j_1)$ and $N(i_2,i_1)$ (up to these isomorphisms), indeed, 
\begin{equation}\label{normalj}
\xymatrix@C=1.5pc{
((TM_1\ltimes f_0^*TM_2)\rtimes \mathfrak{X}_1)\rtimes G \ar@<.5ex>[d]\ar@<-.5ex>[d] \ar[rr]^-{N(j_2,j_1)}&&((TM_1\oplus f_0^*TM_2)\ltimes (f_0^*TM_2\oplus h_0^*TM_3)\rtimes \mathfrak{X}_1)\rtimes G\ar@<.5ex>[d]\ar@<-.5ex>[d]\\
 f_0^*TM_2\ar[rr]&&f_0^*TM_2\oplus h_0^*TM_3
}
\end{equation}
is given by the canonical inclusions and the respective zero sections, and
\begin{equation}\label{normali}
\xymatrix{
\mathfrak{X}_1\rtimes G \ar@<.5ex>[d]\ar@<-.5ex>[d] \ar[rr]^-{N(i_2,i_1)}&&(M_1)^2\times 
(TM_2\ltimes g_0^*TM_3)\rtimes \mathfrak{X}_2\rtimes G\ar@<.5ex>[d]\ar@<-.5ex>[d]\\
 M_1\ar[rr]&& M_1\times g_0^*(TM_3)
}
\end{equation}
is given by the canonical source target maps, caonical inclusions, the respective zero sections and $f_1$ between $\mathfrak{X}_1$ and $\mathfrak{X}_2$. We obtain thus that these are two Lie groupoid immersions for which the respective normal bundle groupoids admit an explicit description again by proposition \ref{normalgrpdisoequivariant} and hence, in this case, an equivariant $K$-orientation and the associated Thom isomorphisms as explained above, see equation (\ref{KequivariantThom}). The fact that the groupoid equivariant Thom isomorphisms in $K$-theory satisfies the abstract Thom properties, either as in the general case of the Thom property \ref{ThomThom} or as its particular case used for concluding theorem \ref{pushforwardfunctII} proof, that is, equation (\ref{ThomThomnormalequation}), follows by classic Thom properties in topological $K$-theory for spaces, indeed, this is proved by Le Gall in \cite{LeGall} where he proves that the groupoid equivariant Thom isomorphism is contructed by a descent procedure in $KK$-theory from the classic Thom isomorphism. The Le Gall results as used in this paper are resumed in the appendix in \cite{CaWangAENS}, proposition A.3. The abstract axioms in this paper are of course a generalization of the properties recalled in ref.cit. We conclude that the point 1. above holds in the present proof case.

To conclude this proof we will justify that both $\dnc(i_2,i_1)$ and $\dnc(j_2,j_1)$ are K-oriented Lie groupoid immersions. As we will now see this is based on a general geometric principle and the functoriality of the deformation to the normal cone construction. Indeed, let us consider a commutative diagram of Lie groupoid immersions
\begin{equation}
\xymatrix{
G_1\ar[r]^-{i_1}\ar[d]_-{j_1}&G_2\ar[d]^-{j_2}\\
H_1\ar[r]_-{i_2}&H_2,
}
\end{equation}
Assume that all these immersions are $K$-oriented and admit Thom isomorphisms. Suppose further $N(j_2,j_1)$ and $N(i_2,i_1)$ are as well $K$-oriented Lie groupoids admitting Thom isomorphisms. All this is satisfied in our case, either by hypothesis or by the explicit computations above, (\ref{normalj}) and (\ref{normali}). 
In order to justify that $\dnc(j_2,j_1)$ is $K$-oriented (the same argument works for $\dnc(i_2,i_1)$), let us observe that there are groupoid cocycles, or groupoid generalized morphisms,
\begin{equation}
G_2 \dashrightarrow spin^c(n),
\end{equation} 
because $j_2$ is $K$-oriented (with $n=dim\, H_2-dim\, G_2$),
and 
\begin{equation}
N(G_2,G_1) \dashrightarrow spin^c(n),
\end{equation} 
because $N(j_2,j_1)$ is $K$-oriented (with the same $n=dim\, H_2-dim\, G_2$), and that these two cocycles fit exactly as single groupoid cocycle
\begin{equation}
\dnc(G_2,G_1) \dashrightarrow spin^c(n),
\end{equation}
by functoriality of the deformation to the normal cone construction since the first cocycle above is induced by $j_2$ and the second by the differential of $j_2$ in the normal direction and hence the induced cocycle above corresponds precisely to the morphism $\dnc(j_2,j_1)$, thus defining a compatible $spin^c$-strucure on the associated normal bundle, and in particular gives the fact that $\dnc(j_2,j_1)$ is K-oriented.
\end{proof}

\subsubsection{The twisted case}\label{subsectiontwistedorb}

As we explainded in the introduction, the proofs of the main theorems of this paper are based on the abstraction of the proof of the wrong way functoriality theorem for twisted differentiable stacks uing deformation groupoids, theorem 4.2 in \cite{CaWangAENS}. Hence, in the presence of twistings, the proof of the theorem above, when applied to the associated central extensions as in ref.cit, applies as well for the twisted case. We state the theorem for completeness.

Let $G\rightrightarrows G_0$ be a Lie groupoid with a twisting\footnote{Given by a generalized morphism $G\dashrightarrow PU(H$ or equivalently by an $S^1$-central extension.} $\alpha$, as in \cite{CaWangAENS}. Let 
$\mathfrak{X}\rightrightarrows M$ be a $G$-orbifold groupoid, then the canonical Lie groupoid morphism
\begin{equation}
\mathfrak{X}\rtimes G\longrightarrow G
\end{equation} 
induces on $\mathfrak{X}\rtimes G$ a twisting that we still denote by $\alpha$ to not load the notation. Let $\mathfrak{X}_i \rightrightarrows M_i$ ($i=1,2$) be two orbifold groupoids. Consider a $G$-equivariant Lie groupoid morphism
\begin{equation}
f:\mathfrak{X}_1\longrightarrow \mathfrak{X}_2.
\end{equation}
As in \cite{CaWangAENS} there is an orientation twisting on $\mathfrak{X}_1\rtimes G$ associated to the $\mathfrak{X}_1\rtimes G$ vector bundle $T^*M_1\oplus f_0^*(T^*M_2)$. Definition 4.1 in \cite{CaWangAENS} extends word by word to the orbifold groupoid case to give a pushforward morphism
\begin{equation}
\xymatrix{
K^*_G(\mathfrak{X}_ 1,\alpha+o_f)\ar[r]^-{f_!}&K^*_G(\mathfrak{X}_2,\alpha)
}
\end{equation}
where $K^*_G(\mathfrak{X},\beta)$ stands for the K-theory of the twisted $C^*$-algebra $K_*(C^*(\mathfrak{X}\rtimes G,\beta))$. The wrong way functoriality theorem states as follows:

\begin{theorem}[Wrong way functoriality for twisted equivariant orbifold K-theory]
Let $G$ be a Lie groupoid with a twisting $\alpha$.
Let 
\begin{equation}\label{orbifoldsquare}
\xymatrix{
\mathfrak{X}_ 1\ar[r]^-{f}\ar[dr]_-{h}&\mathfrak{X}_2\ar[d]^-{g}\\
&\mathfrak{X}_3
}
\end{equation}
be a commutative diagram $K$-oriented $G$-equivariant Lie groupoid morphisms between orbifold groupoids. Then, the following diagram of abelian group morphisms commutes:
\begin{equation}
\xymatrix{
K^*_G(\mathfrak{X}_ 1,\alpha+o_h)\ar[r]^-{f_!}\ar[dr]_-{h_!}&K^*_G(\mathfrak{X}_2,\alpha+o_g)\ar[d]^-{g_!}\\
&K^*_G(\mathfrak{X}_3,\alpha)
}
\end{equation}
\end{theorem}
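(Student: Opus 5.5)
The plan is to run the untwisted proof again, but in the category of $S^1$-central extensions, which is how the twisting enters $C^*(\mathfrak{X}\rtimes G,\beta)$. Concretely, a twisting on $\mathfrak{X}_i\rtimes G$ is an $S^1$-central extension $\widetilde{\mathfrak{X}_i\rtimes G}^{\beta}$. The twisted $K$-group $K^*_G(\mathfrak{X}_i,\beta)$ is the $K$-theory of the corresponding $S^1$-homogeneous part of $C^*(\widetilde{\mathfrak{X}_i\rtimes G}^{\beta})$. We then set $F^\bullet_\beta(H):=K_*(C^*(H,\beta|_H))$ on every groupoid $H$ that carries a pulled-back twisting. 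This includes the normal groupoids, the deformation groupoids $\dnc(\cdot)$ and the iterated deformation groupoid $\mathfrak{D}$ of Theorem \ref{pushforwardfunctII}. Each of these has a canonical Lie groupoid morphism to $G$ through the structural maps, and we pull $\alpha$ back along it, as in \cite{CaWangAENS}.

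Next we verify the axioms for $F^\bullet_\beta$. Axiom (A1) comes from the restriction short exact sequences for twisted groupoid $C^*$-algebras along closed saturated subgroupoids. Axiom (A2) holds because the twisting on $\dnc(G_2,G_1,j)$ is pulled back from $G$ through the projection $\dnc(G_2,G_1,j)\to G_2\times\mathbb{R}\to G$. The twisted algebra of the open piece $G_2\times(0,1]$ is therefore a cone and contractible, so $r_0$ is an isomorphism. Morita invariance holds because twisted Morita equivalences give strongly Morita equivalent twisted algebras, with the naturality and Thom compatibility as in \cite{Willtool}. With these in hand, Theorem \ref{pushforwardfunctI} applies verbatim to the square of immersions $i_1=\tilde f\times Id_G$, $j_1=Id$, $i_2=\tilde{\tilde h}\times Id_G$, $j_2=\tilde{\tilde g}\times Id_G$ built in the untwisted proof, since the pulled-back twistings on the whole double-deformation diagram agree.

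The main obstacle is the Thom step. The Thom isomorphism of a non-$spin^c$ vector bundle shifts the twisting by its orientation twisting $o$. Le Gall's twisted equivariant Thom isomorphism, as summarized in \cite{CaWangAENS} Appendix A, gives
\[
T:K^*_G(\mathfrak{X}_1,\alpha+o_{\tilde h})\to K^*_G(N(\tilde h),\alpha).
\]
It is natural under restriction and multiplicative under composition, provided the orientation twistings add: $o_{V}=o_{E_1}+o_{V|E_1}$ for the square in Definition \ref{Thomproperties}. So in diagram \textbf{I} of Theorem \ref{pushforwardfunctII} I need the identity
\[
o_h = o_f + f^*o_g
\]
as twistings on $\mathfrak{X}_1\rtimes G$, together with a canonical isomorphism. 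I would get it from the explicit description of the normal groupoids given by Proposition \ref{normalgrpdisoequivariant} and by (\ref{normalj}), (\ref{normali}). These show that the bundle $T^*M_1\oplus h_0^*TM_3$ is stably isomorphic to $(T^*M_1\oplus f_0^*TM_2)\oplus f_0^*(T^*M_2\oplus g_0^*TM_3)$, by the extension $0\to f_0^*TM_2\to\cdots$. Additivity of orientation twistings for direct sums then gives the identity, and the resulting identification of domains is exactly what makes $f_!$ and $g_!$ composable in the statement, with $f_!$ defined on $K^*_G(\mathfrak{X}_1,\alpha+o_h)=K^*_G(\mathfrak{X}_1,(\alpha+o_g)+o_f)$.

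For the orientation deformation property (diagrams \textbf{II}–\textbf{V}), I would copy the cocycle gluing argument from the untwisted proof. The orientation cocycles $G_2\dashrightarrow O(n)$ and $N(G_2,G_1)\dashrightarrow O(n)$ glue through $\dnc$-functoriality into one cocycle on $\dnc(G_2,G_1)$. This gives an orientation twisting on the deformation groupoid whose restrictions at $t=0,1$ are the two given ones, and hence twisted Thom isomorphisms that commute with $r_0$ and $r_1$. Finally, the Morita step $K_G^*(\mathfrak{X}_2\times M_1^2,\alpha)\cong K_G^*(\mathfrak{X}_2,\alpha)$ and the comparisons between the double-tilde and single-tilde maps in the proof of Theorem \ref{thmFetale} carry over because every twisting involved is pulled back from $G$. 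This yields $g_!\circ f_!=h_!$.
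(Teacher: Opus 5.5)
Your plan follows the paper's route. The paper's proof of this theorem is only the remark that the untwisted argument runs verbatim on the associated $S^1$-central extensions, as in \cite{CaWangAENS}. That argument consists of Theorem \ref{pushforwardfunctII} applied to the square $i_1=\tilde f\times Id_G$, $j_1=Id$, $i_2=\tilde{\tilde h}\times Id_G$, $j_2=\tilde{\tilde g}\times Id_G$, together with the cocycle gluing and the Morita comparisons. Your outline spells out what this involves, including the fact that the statement itself presupposes $o_h\cong o_f+f^*o_g$.

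One step, however, is justified incorrectly. The bundle $(T^*M_1\oplus f_0^*TM_2)\oplus f_0^*(T^*M_2\oplus g_0^*TM_3)$ is the Fourier-side bundle of the normal groupoid of $\tilde{\tilde h}$ in (\ref{normalj}). So for the double-tilde square (diagram \textbf{I}), additivity of orientation twistings is just this direct sum decomposition. It is \emph{not}, however, stably isomorphic to $T^*M_1\oplus h_0^*TM_3$, which is the bundle attached to $h$. Since $g_0\circ f_0=h_0$, it equals $(T^*M_1\oplus h_0^*TM_3)\oplus f_0^*(TM_2\oplus T^*M_2)$, and $TM_2\oplus T^*M_2$ need not be stably trivial. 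For example, with $M_1=M_2=\mathbb{CP}^2$ and $f_0=\mathrm{id}$, its first Pontryagin class is $6x^2\neq 0$, where $x$ is the hyperplane class.

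The missing fact is that a summand of the form $E\oplus E^*$ has canonically trivial orientation twisting: once an invariant metric is fixed it is $E\otimes\mathbb{C}$, hence canonically $spin^c$. This is what identifies $o_{\tilde{\tilde h}}$ with $o_h$, and hence $K^*_G(\mathfrak{X}_1,\alpha+o_h)$ with $K^*_G(\mathfrak{X}_1,(\alpha+o_g)+o_f)$. The same fact is what your last step needs when comparing $\tilde{\tilde h}_!$ with $h_!$. There, ``every twisting involved is pulled back from $G$'' is not the right reason, because the orientation twistings are not pulled back from $G$.
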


\appendix

\section{Semi-direct product groupoids}

In this paper, several semi-direct product groupoids are used, in this section we explicit them.

\subsection{A groupoid acting on a groupoid}\label{subsectiongrpdactinggrpd}

Let $G\rightrightarrows M$ be a Lie groupoid. Remember that a (right, but also some left spaces will appear) $G$-space consists of a manifold $P$ together with a moment  map $\pi_P:P\to M$ (a smooth map) and partially defined action map (smooth in our case)
\begin{equation}
\theta:P\times_{\pi_P,t_G}G \to P
\end{equation} 
such that, denoting $\theta(\gamma,g):= \gamma\cdot g$, whenever $\pi_P(\gamma)=t_G(g)$, the following action axioms are satisfied:
\begin{enumerate}
\item $\pi_P(\gamma\cdot g)=s_G(g)$.
\item $(\gamma\cdot g)\cdot h=\gamma\cdot (gh)$ for $\gamma\in P$ and $g,h\in^ G$ whenever defined.
\item $\gamma\cdot x=\gamma$ for $\gamma\in P$ and $\pi_P(\gamma)=x\in M$ seen as a unit arrow in $G$.
\end{enumerate}

\begin{definition}[A groupoid acting on a groupoid]
Let $H\rightrightarrows P$ a Lie groupoid. An action of the Lie groupoid $G$ on the Lie groupoid $H$, or simply an action of $G$ on $H$, consists on $G$-actions on the spaces $H$ and $P$, with respective moments maps $\pi_H$ and $\pi_P$, such that 
\begin{enumerate}
\item All the structural groupoids maps of $H$ commute with the moment maps $\pi_H$ and $\pi_P$.
\item All the structural groupoids maps of $H$ are $G$-equivariant (in the obvious way, it makes sense since first required property above).
\end{enumerate}
\end{definition}

\begin{definition}[The semi-direct product groupoid associated to a groupoid acting on a groupoid]
Let $G\rightrightarrows M$ be a Lie groupoid. Suppose there is a Lie groupoid $H\rightrightarrows P$ and a Lie groupoid action of $G$ on the Lie groupoid $H$. The associated semi-direct product groupoid
\begin{equation}
H\rtimes G \rightrightarrows P
\end{equation}
is the Lie groupoid described as follows:
\begin{enumerate}
\item The arrow manifold is
\begin{equation}
H\rtimes G= H\times_{\pi_H,t_G}G.
\end{equation}
\item The source and target maps:
For $(\gamma,g)\in H\times_{\pi_H,t_G}G$
\begin{equation}
s(\gamma,g):=s_H(\gamma)\cdot g
\end{equation}
and 
\begin{equation}
t(\gamma,g):=t_H(\gamma).
\end{equation}
\item The multiplication:
\begin{equation}
(\gamma,g)\cdot (\eta,h):=(\gamma\cdot (\eta\cdot g^{-1}),gh)
\end{equation}
defined if $\pi_H(\gamma)=t_G(g)$, $\pi_H(\eta)=t_G(h)$ and $s_H(\gamma)\cdot g=t_H(\eta)$.
\item The unit map:
\begin{equation}
u(x):= (u_H(x),u_G(\pi_P(x)))
\end{equation}
for $x\in P$.
\item The inverse:
\begin{equation}
(\gamma,g)^{-1}:=(\gamma^{-1}\cdot g,g^{-1}).
\end{equation}
\end{enumerate}
\end{definition}

It is straightforward to verify that with the above described structural maps we obtain indeed a Lie groupoid
\begin{equation}
H\rtimes G \rightrightarrows P.
\end{equation}

General examples, whose particular cases appear in this paper, include:

\begin{enumerate}
\item A Lie group $G$ acting on a Lie groupoid $H\rightrightarrows P$. In this case the moment maps are all trivial and hence the only condition for having a groupoid action is to have group actions on $H$ and $P$ such that all the groupoid structural maps of $H$ are $G$-equivariant. In such case, the associated semi-direct product groupoid $H \rtimes G$ has underlying total space $H\times G$.
\item An action of a Lie groupoid $G\rightrightarrows M$ on a space $P$ can be seen as a Lie groupoid action on the identity Lie groupoid $P\rightrightarrows P$. The associated semi-direct product groupoid is the "classic" associated action groupoid
$P\rtimes G\rightrightarrows P.$
\end{enumerate}

\subsection{Explicit examples used in this paper}

{\bf The action groupoid of a vector bundle map:} Let $E,F$ be two vector bundles over a space $X$. Let $\theta:E\to F$ be a vector bundle morphism over the identity of $X$. There is an associated (left in this paper) action of the vector bundle Lie groupoid $E\rightrightarrows X$ on the unit Lie groupoid $F\rightrightarrows F$, the action is given by fiberwise translation by $\theta$: 
For $x\in X$, $V\in E_x$ and $W\in F_x$,
$$(x,V)\cdot (x,W):= (x,\theta(V)+W).$$
The associated semi-direct product groupoid
$E\ltimes F\rightrightarrows F$
is called the action groupoid (or semi-direct product groupoid) associated to $\theta$. 

An example of this situation appears several times in this paper. For example, for $f_0:M_1\to M_2$ a smooth map between two manifolds, there is an induced vector bundle morphism over $M_1$ given by the differential
$$df_0:TM_1\to f_0^*TM_2,$$
and hence the associated Lie groupoid
$$TM_1\ltimes f_0^*TM_2\rightrightarrows f_0^*TM_2.$$

{\bf The normal groupoid of an orbifold morphism}
Let $\mathfrak{X}_i \rightrightarrows M_i$ be two orbifold groupoids. Consider a Lie groupoid morphism
\begin{equation}
f:\mathfrak{X}_1\longrightarrow \mathfrak{X}_2.
\end{equation}

The terminology for the groupoid we present below is justified by proposition \ref{normalgrpdiso}. Before describing it we need to describe some actions:

\begin{itemize}
\item The Lie groupoid action of $\mathfrak{X}_1$ on the vector bundle Lie groupoid $TM_1\rightrightarrows M_1$. It is induced by the (right) action of $\mathfrak{X}_1$ on the manifold $TM_1$:

For $(x,V_1)\in T_xM_1$ and $g\in \mathfrak{X}_1$ with $t_{\mathfrak{X}_1}(g)=x$

$$(x,V_1)\cdot g:=(s_{\mathfrak{X}_1}(g),(d_gs_{\mathfrak{X}_1}\circ (d_gt_{\mathfrak{X}_1})^{-1})(V_1))\in T_{s_{\mathfrak{X}_1}(g)}M_1,$$

where we have used that $\mathfrak{X}_1$ is \'etale so that $(d_gt_{\mathfrak{X}_1})^{-1}$ makes sense. 

\item The Lie groupoid action of $\mathfrak{X}_1$ on the unit Lie groupoid $f_0^*TM_2\rightrightarrows f_0^*TM_2$. It is induced by the Lie groupoid morphism $f$ and the (right) action of $\mathfrak{X}_2$ on the manifold $f_0^*TM_2$ given as the above case:

For $(x,V_2)\in (f_0^*TM_2)_{x}$ (i.e. $V_2\in T_{f_0(x)}M_2$) and $g\in \mathfrak{X}_1$ with $t_{\mathfrak{X}_1}(g)=x$

$$(x,V_2)\cdot g:=(s_{\mathfrak{X}_1}(g),(d_{f(g)}s_{\mathfrak{X}_2}\circ (d_{f(g)}t_{\mathfrak{X}_2})^{-1})(V_2))
\in (f_0^*TM_2)_{s_{\mathfrak{X}_1}(g)},$$
where the notations are as above.
\end{itemize}

The two actions above combine to give a diagonal Lie groupoid action of $\mathfrak{X}_1$ on the Lie groupoid 
$$TM_1\ltimes f_0^*TM_2\rightrightarrows f_0^*TM_2$$
and give a semi-direct product groupoid
\begin{equation}\label{normalgrpdorbimorphism}
(TM_1\ltimes f_0^*TM_2)\rtimes \mathfrak{X}_1\rightrightarrows f_0^*TM_2
\end{equation}
that we call {\it the normal groupoid of the orbifold morphism $f$}.

\vspace{2mm}

{\bf The Fourier normal groupoid of an orbifold morphism}

There is no Fourier transform or related for Lie groupoids, however the Lie groupoid described below has the property that its $C^*$-algebra is isomorphic the $C^*$-
algebra of the previous example, the isomorphism given a fiberwise Fourier transform, hence the terminology.

For describing these kind of groupoids, let us consider $\mathfrak{X}_i \rightrightarrows M_i$ two orbifold groupoids, for $i= 1,2$, together with a Lie groupoid morphism
\begin{equation}
f:\mathfrak{X}_1\longrightarrow \mathfrak{X}_2.
\end{equation}

There is an action of the Lie groupoid $\mathfrak{X}_1$ on the Lie groupoid $T^*M_1\rightrightarrows T^*M_1$ given as follows:

For $(x,\xi_1)\in T_x^*M_1$ and $g\in \mathfrak{X}_1$ with $t_{\mathfrak{X}_1}(g)=x$

\begin{equation}
(x,\xi_1)\cdot g:= (s_{\mathfrak{X}_1}(g),\xi_1 \circ d_{g}t_{\mathfrak{X}_1}\circ (d_{g}s_{\mathfrak{X}_1})^{-1})\in T_{s_{\mathfrak{X}_1}(g)}^*M_1
\end{equation}
with notation as above.

With the above action in hand we obtain a Lie groupoid action of $\mathfrak{X}_1$ on the Lie groupoid $T^*M_1\oplus f_0^*TM_2\rightrightarrows T^*M_1\oplus f_0^*TM_2$ giving the semi-direct product groupoid 
\begin{equation}
(T^*M_1\oplus f_0^*TM_2)\rtimes \mathfrak{X}_1\rightrightarrows T^*M_1\oplus f_0^*TM_2
\end{equation}
that we call {\it the Fourier normal groupoid of the orbifold morphism $f$}.

\section{Equivariant K-theory Thom isomorphism for orbifold groupoids}\label{appendixThom}

Let $\mathfrak{X}_i \rightrightarrows M_i$ be two orbifold groupoids, for $i=1,2$. Consider a Lie groupoid morphism
\begin{equation}
f:\mathfrak{X}_1\longrightarrow \mathfrak{X}_2.
\end{equation}
Assume there is a Lie groupoid $G$ such that $f$ is $G$-equivariant and $G$-$K^*$-oriented.

In this appendix we will describe the Thom isomorphism
\begin{equation}\label{appendixThomf}
T_{\tilde{f}}:K^*_G(\mathfrak{X}_1)\longrightarrow K^*_G(N(\tilde{f})) 
\end{equation}
where $N(\tilde{f})$ stands for the normal groupoid of the groupoid immersion $$(f, (r,s)): \mathfrak{X}_1 \longrightarrow  \mathfrak{X}_2 \times (M_1 \times M_1).
$$

In order to illustrate its construction, let us consider the following simpler situation first. Let $E_1, E_2$ be two vector bundles over a manifold $M$, and let $\theta: E_1 \to E_2$ be a vector bundle map over the identity of $M$. Supppose the vector bundle $E_1^*\oplus E_2\to M$ admits a $spin^c$-structure. We will define an isomorphism
\begin{equation}
K^*(M)\xrightarrow{Th_\theta} K^*(E_1\ltimes E_2)
\end{equation}
where $E_1\ltimes E_2\rightrightarrows E_2$ stands for the action groupoid associated to the canonical action induced from the vector bundle morphism $\theta$. It will be given by the composition of the following three isomorphisms:

\begin{enumerate}
\item The ``classical'' Thom isomorphism
\begin{equation}
\xymatrix{
K^*(M)\ar[rr]^-{Thom_{E_1^*\oplus E_2}}&& K^*(E_1^*\oplus E_2)
}
\end{equation}
in topological K-theory of spaces.
\item The isomorphism
\begin{equation}
\xymatrix{
K^*(E_1^*\oplus E_2\rightrightarrows E_1^*\oplus E_2) \ar[r]^-{\mathcal{F}} & K^*(E_1\oplus E_2\rightrightarrows E_2)
}
\end{equation}
induced from the isomorphism of $C^*$-algebras
\begin{equation}
C_0(E_1^*\oplus E_2\rightrightarrows E_1^*\oplus E_2)=C^*(E_1^*\oplus E_2\rightrightarrows E_1^*\oplus E_2)\cong C^*(E_1\oplus E_2\rightrightarrows E_2)
\end{equation}
induced by the Fourier isomorphism in the $E_1$-direction.
\item The isomorphism
\begin{equation}\label{deflineariso}
K^*(E_1\oplus E_2\rightrightarrows E_2)\cong K^*(E_1\ltimes E_2\rightrightarrows E_2)
\end{equation}
induced from the deformation of the action by $\theta$ that we now briefly recall:

Consider the groupoid action of the vector bundle groupoid $E_1\times [0,1]\rightrightarrows M\times [0,1]$ on the space groupoid $E_2\times [0,1]\rightrightarrows E_2\times [0,1]$ (with obvious momentum maps) given by
$$(v_1,\epsilon)\cdot (v_2,\epsilon):= (\epsilon\cdot \theta(v_1)+v_2,\epsilon)$$
for $v_i\in E_i$ in the same fiber over $M$. This induce an action Lie groupoid
\begin{equation}
\mathcal{H}=(E_1\times [0,1])\ltimes (E_2\times [0,1])\rightrightarrows E_2\times [0,1]
\end{equation}
When restricted to $\epsilon=0$ this groupoid gives the groupoid $E_1\oplus E_2\rightrightarrows E_2$, and when restricted to $\epsilon=1$ one has the groupoid $E_1\ltimes E_2\rightrightarrows E_2$.
It can be shown that in this case both evaluation morphisms
\begin{equation}
C^*(\mathcal{H})\stackrel{e_0}{\longrightarrow} C^*(E_1\oplus E_2)
\end{equation}
and 
\begin{equation}
C^*(\mathcal{H})\stackrel{e_1}{\longrightarrow} C^*(E_1\ltimes E_2)
\end{equation}
have contractible kernels and hence induce isomorphisms in $K$-theory. The isomorphism (\ref{deflineariso}) above is given precisely by the induced morphism from the evaluations above
\begin{equation}
(e_1)_ *\circ (e_0)_ *^{-1}:K^*(E_1\oplus E_2\rightrightarrows E_2)\to K^*(E_1\ltimes E_2\rightrightarrows E_2).
\end{equation}
\end{enumerate}
The definition for the general case follows the same lines applied to the case $E_1=TM_1$ and $E_2=f_0^*TM_2$ with the extra data of the groupoid action by $\mathfrak{X}_1\rtimes G$. We explain it in the following definition.
%
%
\begin{definition}\label{appendixThomfdef}
Let $\mathfrak{X}_i \rightrightarrows M_i$ be two orbifold groupoids. Consider a Lie groupoid morphism
\begin{equation}
f:\mathfrak{X}_1\longrightarrow \mathfrak{X}_2.
\end{equation}
Assume there is a Lie groupoid $G$ such that $f$ is $G$-equivariant and $G$-$K$-oriented. The Thom isomorphism
\begin{equation}
T_{\tilde{f}}:K^*_G(\mathfrak{X}_1)\to K^*_G(N(\tilde{f})) 
\end{equation}
is given as the composition of the following four isomorphisms:
\begin{enumerate}
\item The $\mathfrak{X}_1\rtimes G$-equivariant Thom isomorphism
\begin{equation}
\xymatrix{
K^*_G(\mathfrak{X}_1)=K^*(M_1\rtimes (\mathfrak{X}_1\rtimes G))\ar[rr]^-{Thom}&& 
K^*((T^*M_1\oplus f_0^*TM_2)\rtimes (\mathfrak{X}_1\rtimes G))
}
\end{equation}
associated to the $\mathfrak{X}_1\rtimes G$-$spin^c$ vector bundle $T^*M_1\oplus f_0^*TM_2\to M_1$.
\item The isomorphism
\begin{equation}
\xymatrix{
K^*((T^*M_1^*\oplus f_0^*TM_2)\rtimes (\mathfrak{X}_1\rtimes G)) \ar[r]^-{\mathcal{F}} & K^*((TM_1\oplus f_0^*TM_2)\rtimes (\mathfrak{X}_1\rtimes G))
}
\end{equation}
induced from the isomorphism of $C^*$-algebras
\begin{equation}
C^*((T^*M_1\oplus f_0^*TM_2)\rtimes (\mathfrak{X}_1\rtimes G))\cong C^*((T^*M_1\oplus f_0^*TM_2)\rtimes (\mathfrak{X}_1\rtimes G))
\end{equation}
induced by the Fourier isomorphism in the $TM_1$-direction.
\item The isomorphism
\begin{equation}\label{deflineariso}
K^*((TM_1\oplus f_0^*TM_2)\rtimes (\mathfrak{X}_1\rtimes G))\cong K^*((TM_1\ltimes f_0^*TM_2)\rtimes (\mathfrak{X}_1\rtimes G))
\end{equation}
induced from the deformation of the action by $df_0$.
\item The isomorphism 
\begin{equation}
K^*((TM_1\ltimes f_0^*TM_2)\rtimes (\mathfrak{X}_1\rtimes G))\cong K^*(N(\tilde{f})\rtimes G)=K^*_G(N(\tilde{f}))
\end{equation}
induced from the groupoid isomorphism \ref{normalgrpdisoequivariant}.
\end{enumerate}
\end{definition}

\bibliographystyle{plain}
\bibliography{bibliographie}

\end{document}